\numberwithin{equation}{section}
\newtheorem{theorem}{Theorem}[section]
\newtheorem{lemma}{Lemma}[section]
\newtheorem{corollary}{Corollary}[section]
\newcommand{\yb}{\bar{\y}}
\newcommand{\rb}{\bar{r}}
\newcommand{\sbar}{\bar{s}}
\newcommand{\pb}{\bar{\pee}}
\newcommand{\ub}{\bar{\U}}
\newcommand{\yhb}{\bar{\y}_h}
\newcommand{\rhb}{\bar{r}_h}
\newcommand{\shbar}{\bar{s}_h}
\newcommand{\phb}{\bar{\pee}_h}
\newcommand{\uhb}{\bar{\U}_h}
\newcommand{\Om}{\Omega}
\newcommand{\D}{\Delta}
\newcommand{\n}{\nabla}
\newcommand{\y}{\textbf{y}}
\newcommand{\pee}{\textbf{p}}
\newcommand{\pw}{\rm{pw}}
\newcommand{\ls}{[L^2(\Omega)]^2}
\newcommand{\h}{\textbf{H}_0^1(\Omega)}
\newcommand{\U}{{\bf u}}
\newcommand{\csol}{(\bar{\bf y},\bar{r},\bar{\bf p},\bar{s},\bar{\U})}
\newcommand{\dsol}{(\bar{\bf y}_h,\bar{r}_h,\bar{\bf p}_h,\bar{s}_h,\bar{\U}_h)}
\newcommand{\1}{\|\widehat{\bar{\U}}_h-\bar{\U}_h\|}
\newcommand{\2}{\enorm{\widehat{\bar{\y}}_h-\bar{\y}_h}_{\pw}}
\newcommand{\w}{\|\widehat{\bar{\y}}_h-\bar{\y}_h\|}
\newcommand{\3}{\enorm{\widehat{\bar{\pee}}_h-\bar{\pee}_h}_{\pw}}
\newcommand{\x}{\|\widehat{\bar{\pee}}_h-\bar{\pee}_h\|}
\newcommand{\4}{\|\widehat{\bar{r}}_h-\bar{r}_h\|}
\newcommand{\5}{\|\widehat{\bar{s}}_h-\bar{s}_h\|}
\newcommand{\6}{\enorm{\widehat{\widetilde{\y}}_h-\bar{\y}_h}_{\pw}}
\newcommand{\q}{\|\widehat{\widetilde{\y}}_h-\bar{\y}_h\|}
\newcommand{\7}{\enorm{\widehat{\widetilde{\pee}}_h-\bar{\pee}_h}_{\pw}}
\newcommand{\z}{\|\widehat{\widetilde{\pee}}_h-\bar{\pee}_h\|}
\newcommand{\8}{\|\widehat{\widetilde{r}}_h-\bar{r}_h\|}
\newcommand{\9}{\|\widehat{\widetilde{s}}_h-\bar{s}_h\|}
\newcommand{\cerr}{\|\bar{\U}-\bar{\U}_h\|^2}
\newcommand{\wcerr}{\|\bar{\U}-\bar{\U}_h\|}
\newcommand{\wsterr}{\enorm{\bar{\y}-\bar{\y}_h}_{\rm \pw}}
\newcommand{\wtum}{\|\bar{r}-\bar{r}_h\|}
\newcommand{\waderr}{\enorm{\bar{\pee}-\bar{\pee}_h}_{\rm \pw}}
\newcommand{\wmai}{\|\bar{s}-\bar{s}_h\|}
\newcommand{\wlsterr}{\|\bar{\y}-\bar{\y}_h\|}
\newcommand{\wladerr}{\|\bar{\pee}-\bar{\pee}_h\|}
\newcommand{\wlauxsterr}{\|\widetilde{\y}-\bar{\y}_h\|}
\newcommand{\wlauxaderr}{\|\widetilde{\pee}-\bar{\pee}_h\|}
\newcommand{\wTUM}{\|\widetilde{r}-\bar{r}_h\|}
\newcommand{\wMAI}{\|\widetilde{s}-\bar{s}_h\|}
\newcommand{\wauxsterr}{\enorm{\widetilde{\y}-\bar{\y}_h}_{\rm \pw}}
\newcommand{\wauxaderr}{\enorm{\widetilde{\pee}-\bar{\pee}_h}_{\rm \pw}}
\newcommand{\normone}{\|\widetilde{\bf{u}}-\uhb\|}
\newcommand{\normtwo}{\|\widetilde{\bf{u}}_h-\uhb\|}
\newcommand{\enorm}[1]{{\vert\kern-0.25ex\vert\kern-0.25ex\vert #1 	\vert\kern-0.25ex \vert\kern-0.25ex \vert}}
\newcommand{\T}{\mathcal{T}}
\newcommand{\E}{\mathcal{E}}
\title{Quasi-Optimality of AFEM for Distributed Optimal Control Problems of Stokes Equations: An Axiomatic Framework}
\author{
\makebox[\textwidth][c]{%
\parbox{0.3\textwidth}{\centering
\href{https://orcid.org/0009-0004-3203-4018}{\includegraphics[scale=0.06]{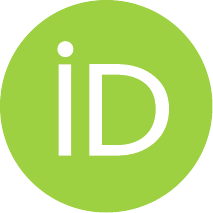}\hspace{1mm}Tooba M. Shaikh}\\
School of Mathematics\\
IISER TVM\\
Kerala, India \\
\texttt{toobashaikh20@iisertvm.ac.in}
}
\hfill
\parbox{0.3\textwidth}{\centering
\href{https://orcid.org/0000-0003-1663-334X}{\includegraphics[scale=0.06]{orcid.pdf}\hspace{1mm}Asha K. Dond  
\thanks{Corresponding author (Asha K. Dond).}}\\
School of Mathematics\\
IISER TVM\\
Kerala, India Email: \texttt{ashadond@iisertvm.ac.in}
}
}
}
\begin{document}
\maketitle
\begin{abstract} 
	\noindent This paper focuses on the quasi-optimality of an adaptive nonconforming finite element method for a distributed optimal control problem governed by the Stokes equation. The nonconforming lowest order Crouzeix-Raviart element and piecewise constant spaces are used to discretise the velocity and pressure variables, respectively. The control variable is discretised using both variational and discretised approach. The error equivalence results at both continuous and discrete levels, leading to a priori and a posteriori error estimates are presented under minimal regularity assumption on optimal solutions. The quasi-optimal convergence rates of the adaptive algorithm are established based on a general axiomatic framework that includes stability, reduction, discrete reliability, and quasi-orthogonality.	The theoretical findings are validated through numerical experiments on convex as well as nonconvex domains.	
\end{abstract}
\keywords {distributed control,   nonconforming, Crouzeix-Raviart FEM,   a~posteriori, error estimates, convergence, quasi-optimality.}
\noindent\textbf{2020 Mathematics Subject Classification:}
49M25, 49M41, 65N15, 65N30

\section{Introduction} 
\label{introduction}
The aim of this paper is to study the quasi-optimality of an adaptive nonconforming finite element method for a distributed optimal control problem governed by the Stokes equation. Such problems are fundamental in the regulation of viscous incompressible flows and are computationally demanding because the optimality system couples the state, adjoint, and control variables. Since these variables may exhibit localized features in different parts of the domain, adaptive mesh refinement is crucial to resolve them accurately with minimal computational cost. Proving quasi-optimality of the adaptive finite element method ensures the efficient use of degrees of freedom.


\subsection{Optimal control problem}
Let $\Om$ be a bounded domain in $\mathbb{R}^2$
with the boundary $\partial \Omega$. Consider the optimal control problem defined by
\begin{equation}
	\label{min1}
	\min_{\U\in {\bf U_{\rm ad}}} J(\y,\U)=\frac{1}{2}\int_{\Om} (\y-\y_d)^2 dx+\frac{\alpha}{2}\int_{\Om}\U^2 dx
\end{equation}
subject to
\begin{equation}
	\label{mod_pr}
	\begin{cases}
		-\D\y + \n r=\textbf{f}+\U &\mbox{in}\hspace{0.1cm}\Om,\\
		\quad\quad\hspace{0.25cm} \mbox{div} \y =0 &\mbox{in}\hspace{0.1cm}\Om,\\
		\quad\quad\quad\hspace{0.25cm} \y=0 &\mbox{on}\hspace{0.1cm}\partial \Om,\\
	\end{cases}
\end{equation}
\noindent where ${\bf f}\in \ls:={\bf L}^2(\Om)$ is a load function, $\y_d\in\textbf{L}^2(\Om)$ is a given desired state, and $\alpha>0$ is a fixed regularization parameter.  The admissible set for control variable is denoted by ${\bf U}_{\rm ad}$ and for any $\U_a,\U_b\in\mathbb{R}^2$ it is defined as 
\begin{center}
	${\bf U}_{\rm ad} :=\{\U\in\textbf{L}^2(\Om) : \U_a\leq \U(x)\leq \U_b\hspace{0.1cm}\mbox{a.e. in}\hspace{0.1cm}\Om \}$.
\end{center}
The inequality is to be interpreted component wise.

\subsection{Literature review}
The error analysis of optimal control problems (OCPs) governed by the Stokes equation has received significant attention in the literature. In \cite{DAGTSR19}, an abstract framework was developed for the error analysis of discontinuous finite element methods (FEMs) for distributed and Neumann boundary control problems with control constraints. The authors derived optimal a priori estimates for velocity and pressure in energy and ${\bf L}^2(\Om)$ norms, respectively. In \cite{RV06}, OCPs with pointwise control constraints for the Stokes problem in 2D and 3D were analyzed using piecewise constant control discretization, along with a post-processing strategy to enhance the accuracy of a priori estimates. These results were extended in \cite{MR2803865} to conforming and nonconforming FEMs under weaker regularity assumptions, focusing on superconvergence in weighted Sobolev spaces. Recently, \cite{leykekhman2025priori} studied a state-constrained OCP governed by the transient Stokes equation using an inf-sup stable spatial discretization and a discontinuous Galerkin method in time. In \cite{MR4304887}, a priori estimates were derived for control-constrained OCPs involving the Stokes system with Dirac measures. Finally, \cite{MR2237883} presented global superconvergence results for distributed OCPs using patch recovery techniques for the control variable, and post-processing methods for the state and adjoint variables in bilinear-constrained schemes on uniform rectangular meshes.\vspace{0.2cm}\\
A~posteriori error estimates for optimal control problems governed by the Stokes equation have been extensively studied. In \cite{DAGTSR19}, the authors derived efficient and reliable error estimators for problems with distributed and Neumann boundary control constraints, applicable under minimal regularity assumptions. The work in \cite{MR1950625} presents a posteriori estimates for both state and adjoint variables in ${\bf L}^2(\Om)$ and energy norms. In \cite{MR2387126}, recovery-type superconvergence analysis is performed using patch recovery and least-squares surface fitting techniques. More recently, virtual element methods (VEMs) have emerged as a promising approach for adaptive techniques in distributed OCPs. For instance, \cite{MR4661255} investigates adaptive VEM for control-constrained Stokes problems, employing polynomial projection and variational discretization of the control variable. Further developments on VEM for Stokes OCPs are discussed in \cite{MR4753595} and the references therein.\vspace{0.2cm}\\
The quasi-optimal behavior of AFEM implies that the adaptive refinement strategy constructs meshes that achieve, the optimal rate of convergence for the error, so that the discrete solutions approximate the exact solution as efficiently as theoretically possible.
The convergence and quasi-optimality of AFEMs for the Stokes equations have been widely explored. In \cite{CCPDR13}, the authors proposed a nonconforming AFEM algorithm and demonstrated its quasi-optimal complexity using a novel discrete Helmholtz decomposition, without explicitly involving the pressure variable. Similarly, \cite{HJXJ13} studied the convergence and quasi-optimality of standard adaptive nonconforming linear elements for the Stokes problem. Further contributions for convergence and quasi-optimality of AFEMs for OCPs can be found in \cite{BRMS11, CCHR2006, SYYNZZ19, GWYN2017, LHCY18} and the references therein. \vspace{0.2cm}\\
{More recently, the work in \cite{YWN2022} investigated the convergence and quasi-optimality of AFEMs for Stokes optimal control problems with distributed control and pointwise constraints. Their approach employed the lowest order Crouzeix-Raviart space for velocity, piecewise constant functions for pressure, and the control variable is treated within the variational formulation. While the error equivalence result derived in \cite{YWN2022} bears similarity to the one presented in the current article, this work advances the analysis by establishing error equivalence results at both the continuous and discrete levels (see Theorem \ref{v error equivalence} and \ref{d error equivalence}) for both variational and discretised control approaches without any smallness assumption on initial mesh size. 
}

\subsection{Axioms of adaptivity}
The quasi-optimality of AFEMs guarantees that the algorithm generates a sequence of adaptive meshes on which the approximate solution converges to the exact solution at optimal rates. The framework of {\it Axioms of adaptivity} in \cite{CMMD2014} explains the convergence and quasi-optimality of AFEMs using the axiomatic framework. This framework explains the four axioms of adaptivity which includes {\bf (A1)}-{\it Stability}, {\bf (A2)}-{\it Reduction}, {\bf (A3)}-{\it Discrete Reliability}, and {\bf (A4)}-{\it Quasi-orthogonality}. One can refer to \cite{BPCCSG18, CCDARH19, CCRH17, LY2021} for their further applications. The axioms approach establishes the optimal convergence of the adaptive estimator, which is equivalent to the underlying discretization error. It replaces a massive, PDE-dependent optimality proof by the verification of four structural axioms, from which the quasi-optimality of AFEM follows by a standard argument. Consequently, one can avoid repeating long, technical, and by now well-known optimality proofs. This methodology is widely used in the literature \cite{MR3757107,MR3935887,MR4766712} and the references cited therein.

\noindent {\it To the best of our knowledge, the convergence and quasi-optimality of AFEMs for Stokes-governed OCPs have not yet been studied within an  axiomatic framework. This article addresses this gap by establishing convergence and quasi-optimality using the axiomatic approach developed in \cite{CMMD2014}, covering both variational and discretised control schemes.}

\subsection{Main contribution}The present work investigates the convergence and quasi-optimality of AFEMs for OCPs governed by the steady-state Stokes equations, within the axiomatic framework. A key feature of the analysis is that it is carried out for both the variational control formulation and the discretised control formulation, where the latter employs piecewise constant functions for approximating the control variable. In state and adjoint equations, the velocity and pressure variables are discretised using the lowest order Crouzeix-Raviart finite element space and the space of piecewise constant functions, respectively.\vspace{0.2cm}\\
A central contribution of this article is the derivation of novel error equivalence results for both control approaches, achieved without imposing any smallness condition on the mesh size. These results are established by introducing auxiliary problems in each case. In the discretised control setting, since the discrete control variable does not belong to the admissible control set ${\bf U}_{\rm ad}$,  an auxiliary control variable $\widetilde{\bf u}$ is introduced. As a result, the error equivalence for the discretised control formulation includes an additional term capturing the discrepancy between the auxiliary and the discrete control variables. To estimate this discrepancy, an additional control error estimator is defined.\vspace{0.2cm}\\
These error equivalence results significantly simplify the convergence and optimality analysis of the OCP by reducing it to the corresponding analysis for the Stokes problem, for which extensive theoretical results are available in the literature. A notable distinction is that the a priori and a posteriori error analysis for the OCP, along with error equivalence results established in this article does not rely on any mesh size restrictions, either at the initial level or throughout the refinement process, for both variational and discretised control approach. In addition, the explicit constants involved in the analysis are carefully tracked and presented to demonstrate their independence from the mesh size, thereby enhancing the comprehensibility of the results.\vspace{0.2cm}\\
In the process of verifying the axioms of adaptivity, the formulation of an appropriate distance function plays a crucial role for both the variational and discretised control approaches. For the variational control approach, the control variable can typically be estimated in terms of the velocity component of the adjoint problem, facilitated by the Lipschitz continuity of $\Pi_{[a,b]}(g):=\min\{b,\max\{a,g\}\}$ operator. However, such an estimation is not directly feasible in the discretised control setting. To address this challenge, the definition of the distance function for the discretised approach is augmented with an additional term that captures the control error between the coarse and refined triangulations. This modification ensures the consistency of the adaptive framework across both formulations. The main contributions of this work are outlined as follows :
\begin{itemize}
    \item Error equivalence results are established at both the continuous and discrete levels, measured in both the energy and $L^2(\Om)$ norms,  for the variational and discretised control formulations without any smallness assumption on mesh size.
    \item  Derivations of a priori and a posteriori error estimates are presented for each control strategy and all the constants are explicitly tracked.
    \item Detailed convergence and quasi-optimality analyses are conducted for both the variational and discretised control settings within the axiomatic framework with minimal regularity assumption.
    \item A series of numerical experiments on convex and non-convex domains are provided to validate the theoretical findings on both variational and discretised control formulations.
\end{itemize}
\subsection{Outline of the paper}
The paper is organized into eight sections. Section \ref{introduction} introduces the model problem and reviews the related literature. In Section \ref{main result}, we present the main results of the paper, including the error equivalence results for both variational and discretised control formulations, and outline the adaptivity axioms used in the analysis. Section \ref{proof of error equi} provides detailed proofs of these error equivalence results for both control approaches and discusses the explicit constants appearing in the analysis. In Section \ref{error control}, we derive the a priori and a posteriori error estimates for the model problem based on the error equivalence framework. The adaptivity axioms for the variational and discretised control formulations are verified in Sections \ref{verification of variational axioms} and \ref{verification of discretised axioms}, respectively. Numerical experiments on convex and nonconvex domains are presented in Section \ref{numerical experiments} to illustrate and support the theoretical results. Finally, Section \ref{conclusions} concludes the paper with a summary of the main findings. Additionally an appendix is included in Section \ref{appendix}. 

\subsection{Notation}
\subsubsection{General notations.} Standard notations for Sobolev spaces are employed throughout. The norm on \( L^2(\Omega) \) is denoted by \( \|\cdot\| \), while \( \|\cdot\|_m \) and \( |\cdot|_m \) denote the norm and semi norm on \( H^m(\Omega) \), respectively. For a subregion \( R \subset \mathbb{R}^2 \), the corresponding localized notations \( \|\cdot\|_R \), \( \|\cdot\|_{m,R} \), and \( |\cdot|_{m,R} \) are used. The inner product on \( L^2(\Omega) \) is denoted by \( (\cdot,\cdot) \). The symbol \( a \lesssim b \) indicates the inequality \( a \leq Cb \) for a generic constant \( C > 0 \) independent of the quantities \( a \) and \( b \). The notation \( a \approx b \) stands for equivalence, i.e., \( a \lesssim b \lesssim a \). The weighted Young's inequality is given by
\[
ab \leq \frac{a^2}{2\epsilon} + \frac{\epsilon b^2}{2}, \quad \text{for all } a, b, \epsilon > 0.
\]
For vectors \( \mathbf{a} = (a_1, a_2) \) and \( \mathbf{b} = (b_1, b_2) \) in \( \mathbb{R}^2 \), the notation \( \mathbf{a} \leq \mathbf{b} \) is understood component wise, i.e., \( a_i \leq b_i \) for \( i = 1, 2 \).
\subsubsection{Discrete level notations.}
Let \( \mathcal{T} \) be an admissible and shape-regular triangulation of the polygonal domain \( \Omega \subset \mathbb{R}^2 \). Let \( \widehat{\mathcal{T}} \) denotes a refinement of \( \mathcal{T} \). For each element \( K \in \mathcal{T} \), the local mesh size is defined as \( h_K := |K|^{1/2} \approx \operatorname{diam}(K) \), where \( |K| \) denotes the area of \( K \). The global mesh size is given by \( h := \max_{K \in \mathcal{T}} h_K \).
For each element \( K \in \mathcal{T} \), denote by \( \mathcal{E}_K \) the set of all edges of \( K \), and let \( \omega(K) \) be the patch of elements sharing at least one edge with \( K \), i.e., 
\[
\omega(K) := \{ K' \in \mathcal{T} : K' \text{ shares an edge with } K \}.
\]
The set of all boundary edges is denoted by \( \mathcal{E}_{\Gamma} \), and the collection of all edges in the triangulation \( \mathcal{T} \) is represented by \( \mathcal{E} \). Let $n_E$ and $t_E$ denote the unit normal and tangential vectors, respectively, across an interior edge $E$. For any vector-valued function ${\bf q}$, the normal jump across $E$ is denoted by $[{\bf q}]_E\cdot n_E$ and is defined as $[{\bf q}]_E\cdot n_E = \big({\bf q}|_{K_+}-{\bf q}|_{K_-}\big)\cdot n_E.$
Similarly, the tangential jump across $E$ is denoted by $[{\bf q}]_E\cdot t_E$ and is given by $[{\bf q}]_E\cdot t_E = \big({\bf q}|_{K_+}-{\bf q}|_{K_-}\big)\cdot t_E.$
Here, $K_+$ and $K_-$ are the two triangles sharing the common edge $E$.

Let \( \mathbb{T}(\mathcal{T}) \) be the set of all admissible refinements of \( \mathcal{T} \) obtained using the newest vertex bisection (NVB) method. Let $\mathbb{T}(\delta)$ denotes the collection of all shape regular triangulation with $h\leq \delta$ for $\delta\in (0,1]$. The \( k \)-th refinement of an initial triangulation \( \mathcal{T}_0 \) is denoted by \( \mathcal{T}_k \). For any \( N \in \mathbb{N} \), define the collection of all refinements with at most \( N \) additional elements as
\[
\mathbb{T}(\mathcal{T}_0, N) := \left\{ \mathcal{T} \in \mathbb{T}(\mathcal{T}_0) : \#\mathcal{T} \leq N + \#\mathcal{T}_0 \right\}.
\]
The space of piecewise polynomials of degree at most \( k \in \mathbb{N} \) over \( \mathcal{T} \) is denoted by \( \mathcal{P}_k(\mathcal{T}) \), and the corresponding vector-valued space by \( \boldsymbol{\mathcal{P}}_k(\mathcal{T}) := \mathcal{P}_k(\mathcal{T}) \times \mathcal{P}_k(\mathcal{T}) \). The \( L^2(\Omega) \)-orthogonal projection onto \( \mathcal{P}_0(\mathcal{T}) \) is denoted by \( \Pi_0\). Let $\widehat{\Pi}_0$ be the \( L^2(\Omega) \)-orthogonal projection onto \( \mathcal{P}_0(\widehat{\mathcal{T}}) \).
A generic positive constant \( C \) is used throughout to denote quantities independent of the mesh size parameter \( h \).


\section{Main result}
\label{main result}
\noindent The first subsection presents the continuous and discrete optimality systems corresponding to the variational and discretised control approaches. The second subsection presents the error equivalence results along with the corresponding auxiliary problems. Finally, the third subsection states the adaptivity axioms required for the AFEM to achieve optimal convergence rates.

\subsection{Continuous and discrete formulations} Let ${\bf V}:=H_0^1(\Om)\times H_0^1(\Om):=\h$ and $Q:=L_0^2(\Om)=\{ q\in L^2(\Om) : \int_{\Om} q dx=0\}$. 
The weak formulation corresponding to \eqref{min1}-\eqref{mod_pr} is to find $({\bf y},r,{\bf u})\in {\bf V}\times Q\times {\bf U}_{\rm ad}$ such that
\begin{equation}
\label{nfunc}
	\min_{\U\in {\bf U_{\rm ad}}} J(\y,\U)=\frac{1}{2}\|\y-\y_d\|^2+\frac{\alpha}{2}\|\U\|^2, \quad {\rm subject~ to}
    \end{equation} 
\begin{equation}
\begin{aligned}
\label{nst}
    a(\y,\textbf{v})-b(\textbf{v},r)&=(\bf{f}+\U,\textbf{v}) \quad{\rm for~all~} \textbf{v}\in V,\\
    b(\y,q)&=0 \quad {\rm for~all~} q\in Q.
\end{aligned}
\end{equation}
Let ${\bf w}=(w_1,w_2)$ and ${\bf v}=(v_1,v_2)$. For any $({\bf w},{\bf v})\in {\bf V}\times {\bf V}$ and $r\in Q$, bilinear forms $a(\cdot,\cdot)$ and $b(\cdot,\cdot)$ are defined as  
\begin{equation*}
	a({\bf w},\textbf{v}):=\int_{\Om}\n{\bf w} :\n\textbf{v}\,dx=\sum_{i=1}^{2}\int_{\Om}\n w_i \cdot \n v_i\,dx\mbox{ and } b(\textbf{v},r) := \int_{\Om} r\,\mbox{div}\textbf{v}\,dx.
\end{equation*}
\noindent The bilinear form $a$ is bounded and ${\bf V}$-elliptic, and the bilinear form $b(\cdot,\cdot)$ satisfies Ladyzhenskaya-Babu\v{s}ka-Brezzi (LBB) condition \cite[Theorem 6.3, Theorem 6.4]{Braess}. Therefore, the well-posedness of  the state equation follows from \cite[Theorem 4.1]{MR0548867}. As the cost functional \eqref{nfunc} is convex, and the state equation is well posed, the standard theory of optimal control problems \cite[Theorem 2.14]{TF2010} implies the well posedness of \eqref{nfunc}-\eqref{nst}. The optimality system seeks $(\yb,\rb,\pb,\sbar,\ub)\in{\bf V}\times Q\times {\bf V}\times Q\times {\bf U}_{\rm ad}$ such that for all $({\bf v},q)\in {\bf V}\times Q$ the following equation holds.
\vspace{-1cm}
\begin{multicols}{2}
\begin{equation}
\label{cstate}
	\begin{cases}
		a(\bar{\y},\textbf{v})-b(\textbf{v},\bar{r})=(\mathbf{f}+\bar{\U},\textbf{v}),\\
		\quad\quad\quad\hspace{0.5cm} b(\bar{\y},q)=0, & 
	\end{cases}
\end{equation}\break 
\begin{equation}
\label{cadjoint}
	\begin{cases}	a(\bar{\pee},\textbf{v})+b(\textbf{v},\bar{s})=(\bar{\y}-\y_d,\textbf{v}), & \\
		\quad\quad\quad\hspace{0.3cm} b(\bar{\pee},q)=0.& 
	\end{cases}
\end{equation}
\end{multicols}
\noindent
The optimality condition is at a continuous level reads 
\begin{equation}
\label{cop}
	(\alpha\bar{\U}+\bar{\pee},\textbf{z}-\bar{\U})\geq 0\hspace{0.2cm}\mbox{ for all } \textbf{z}\in {\bf U}_{\rm ad}.
\end{equation}

\noindent The optimal control $\bar{\U}$ has  a representation in  terms of the adjoint variable $\bar{\pee}$ and the projection operator $\Pi_{[\U_a,\U_b]}$ defined by $\Pi_{[a,b]}(g):=\min\{b,\max\{a,g\}\}$ as
\begin{equation}
	\label{defu}
	\bar{\U}(x)=\Pi_{[\U_a,\U_b]}\left(-\frac{1}{\alpha}\bar{\pee}(x)\right).
\end{equation}
For the discretization of the velocity variables $({\bf y},{\bf p})$ and the pressure variables $(r,s)$, we employ the nonconforming lowest-order Crouzeix-Raviart finite element space and the space of piecewise constant polynomials intersected with $Q$, respectively.
 The lowest order Crouzeix-Raviart space is defined as
\begin{center}
	${\bf V}_h = \{ {\bf v}_h\in{\bf L}^2(\Om) : {\bf v}_h|_{K}\in \boldsymbol{\mathcal{P}}_1(K)\mbox{ for all }K\in\T,{\bf v}$ is continuous at the midpoints $z_E$ of $E\in\mathcal{E}$ and ${\bf v}(z_E)=0$ for all $E\in\mathcal{E}_{\Gamma}\}$.
\end{center}
The discrete space of pressure is defined as $Q_h:=\mathcal{P}_{0}(\T)\cap L_0^2(\Omega)$. The discrete gradient and divergence operators are given by
\begin{equation*}
	(\n_h\textbf{v}_h)|_{K} := \nabla(\textbf{v}_h|_{K})\hspace{0.1cm}\mbox{and}\hspace{0.1cm} (\mbox{div}_h \textbf{v}_h)|_{K} := \mbox{div}(\textbf{v}_h|_{K})\hspace{0.1cm}\mbox{ for all }\hspace{0.1cm} K\in\T ,\textbf{v}_h\in {\bf V}_h.
\end{equation*}
Let
\begin{equation*}
	\enorm{\cdot}_{\rm{pw}}^2:=\sum_{K\in\T}\int_{K} |\n \cdot |^2\,dx.
\end{equation*}

\noindent For ease of notation, the energy norm is defined as $\enorm{({\bf v},q)}_{\rm pw}:=\enorm{\bf v}_{\rm pw}+\|q\|$ and $\|({\bf v},q)\|:=\|{\bf v}\|+\|q\|$ for any ${\bf v}\in {\bf V}\mbox{ and }q\in Q$. Analogously $\enorm{({\bf v},q)}_{\rm pw}^2:=\enorm{\bf v}_{\rm pw}^2+\|q\|^2$ and $\|({\bf v},q)\|^2:=\|{\bf v}\|^2+\|q\|^2$ for any ${\bf v}\in {\bf V}\mbox{ and }q\in Q$.  For any $({\bf y}_h,{\bf v}_h)\in {\bf V}_h\times {\bf V}_h$ and $r_h\in Q_h$ the discrete bilinear forms $a_h(\cdot,\cdot)$ and $b_h(\cdot,\cdot)$ are defined as
\begin{equation*}
a_h(\y_h,\textbf{v}_h):=\sum_{K\in\T}\int_{K}\n\y_h :\n\textbf{v}_h\,dx\mbox{ and }b_h(\y_h,q_h):=\sum_{K\in\T}\int_{K} q_h\mbox{div} \y_h\,dx.
\end{equation*}

The discrete optimality seeks $(\yhb,\rhb,\phb,\shbar,\uhb)\in {\bf V}_h\times Q_h\times {\bf V}_h\times Q_h\times {\bf U}_h$ such that for all $({\bf v}_h,q_h)\in {\bf V}_h\times Q_h$ we can have 
\begin{equation}
\label{dstate}
	\begin{cases}
		a_h(\bar{\y}_h,\textbf{v}_h)-b_h(\textbf{v}_h,\bar{r}_h)=(\textbf{f}+\bar{\U}_h,\textbf{v}_h),\\
		\quad\quad\quad\quad\quad\hspace{0.2cm} b_h(\bar{\y}_h,q_h)=0,
	\end{cases}
\end{equation}
\begin{equation}
\label{dadjoint}
	\begin{cases}		a_h(\bar{\pee}_h,\textbf{v}_h)+b_h(\textbf{v}_h,\bar{s}_h)=(\bar{\y}_h-\y_d,\textbf{v}_h),\\
		\quad\quad\quad\quad\hspace{0.5cm} b_h(\bar{\pee}_h,q_h)=0.
	\end{cases}
\end{equation}

The discrete optimality condition reads
 \begin{equation}
 \label{dop}
 (\alpha\bar{\U}_h+\bar{\pee}_h,\textbf{z}_h-\bar{\U}_h)\geq 0\mbox{ for all } \textbf{z}_h\in {\bf U}_{h}.
 \end{equation}
Here ${\bf U}_h={\bf U}_{\rm ad}$ for variational control approach and ${\bf U}_h={\bf U}_{h,{\rm ad}}:={\bf U}_{\rm ad}\cap{\boldsymbol{\mathcal{P}}}_{0}(\T)$ for discretised control approach.
\noindent
The discrete control in both variational and discretised settings can be represented in terms of the discrete adjoint variable as
\begin{equation}
	\label{defuh}
	\bar{\U}_h(x)=\Pi_{[\U_a,\U_b]}\left(-\frac{1}{\alpha}\bar{\pee}_h(x)\right)\mbox{ and }\bar{\U}_h(x)=\Pi_{[\U_a,\U_b]}\Pi_{0}\left(-\frac{1}{\alpha}\bar{\pee}_h(x)\right),\mbox{ respectively}. 
\end{equation}

\subsection{Error equivalence} 

This section presents the error equivalence results for both the variational and discretised control approaches. Several auxiliary problems are introduced for each approach, and their formulations are discussed in detail. The continuous-level error equivalence results allow the analysis of the optimal control problem to be reduced to the standard analysis of the steady-state Stokes problem. This reduction plays a key role in deriving the a priori and a posteriori error estimates for both control strategies.

\subsubsection{Auxiliary problem for variational approach}
Define auxiliary variables $(\widetilde{\y},\widetilde{r},\widetilde{\pee},\widetilde{s})\in {\bf V}\times Q\times {\bf V}\times Q$  at the continuous level such that, for all $({\bf v}, q)\in {\bf V}\times Q$, they solve the following auxiliary problems \vspace{-1cm}
\begin{multicols}{2}
\begin{align}
\label{cvstate}
	\begin{cases}
		a(\widetilde{\y},\textbf{v})-b(\textbf{v},\widetilde{r})=(\textbf{f}+\bar{\U}_h,\textbf{v}),\\
        \quad\quad\quad\quad\hspace{0.01cm} b(\widetilde{\y},q)=0,
	\end{cases}
\end{align}\break 
\begin{equation}
\label{cvadjoint}
	\begin{cases}
	a(\widetilde{\pee},\textbf{v})+b(\textbf{v},\widetilde{s})=(\bar{\y}_h-\y_d,\textbf{v}),\\
		\quad\quad\quad\quad\hspace{0.01cm} b(\widetilde{\pee},q)=0.
	\end{cases}
\end{equation}
\end{multicols}

\noindent 
Let $\widehat{\bf V}_h$ and $\widehat{Q}_h$ denote the corresponding discrete spaces on triangulation $\widehat{\T}\in \mathbb T(\mathcal T)$. The auxiliary problem at the discrete level is to find $(\widehat{\widetilde{\y}}_h,\widehat{\widetilde{r}}_h, \widehat{\widetilde{\pee}}_h,\widehat{\widetilde{s}}_h)\in {\bf\widehat{V}}_h\times\widehat{Q}_h\times {\bf \widehat{V}}_h\times\widehat{Q}_h$ such that for all $(\widehat{{{\bf v}}}_h, \widehat{{q}}_h)\in \widehat{{\bf V}}_h\times \widehat{Q}_h$ the auxiliary variables solves
\begin{equation}
\label{dvstate}
	\begin{cases}
		a_h(\widehat{\widetilde{\y}}_h,\widehat{{\bf v}}_h)-b_h(\widehat{{\bf v}}_h,\widehat{\widetilde{r}}_h)=(\textbf{f}+\bar{\U}_h,\widehat{{\bf v}}_h)\\
		\quad\quad\quad\quad\quad\quad\hspace{0.001cm} b_h(\widehat{\widetilde{\y}}_h,\widehat{{q}}_h)=0, 
	\end{cases}
\end{equation}
and
\begin{equation}
\label{dvadjoint}
	\begin{cases}
		a_h(\widehat{\widetilde{\pee}}_h,\widehat{{\bf v}}_h)+b_h(\widehat{{\bf v}}_h,\widehat{\widetilde{s}}_h)=(\bar{\y}_h-\y_d,\widehat{{\bf v}}_h)\\
		\quad\quad\quad\quad\quad\quad\hspace{0.001cm} b_h(\widehat{\widetilde{\pee}}_h,\widehat{{q}}_h)=0. 
	\end{cases}
\end{equation}

\subsubsection{Error equivalence result for variational approach}
{\small
\begin{table}[h!]
\centering
\begin{tabular}{|c|c|c|c|c|c|}
\hline
\textbf{Type} & $E_{\rm C}$ & $E_{\rm S}$ & $E_{\rm A}$ & $\widetilde{E}_{\rm S}$ & $\widetilde{E}_{\rm A}$ \\
\hline
Continuous & $\bar{\mathbf{u}} - \bar{\mathbf{u}}_h$ & $(\bar{\mathbf{y}} - \bar{\mathbf{y}}_h,\ \bar{r} - \bar{r}_h)$ & $(\bar{\mathbf{p}} - \bar{\mathbf{p}}_h,\ \bar{s} - \bar{s}_h)$ & $(\widetilde{\mathbf{y}} - \bar{\mathbf{y}}_h,\ \widetilde{r} - \bar{r}_h)$ & $(\widetilde{\mathbf{p}} - \bar{\mathbf{p}}_h,\ \widetilde{s} - \bar{s}_h)$ \\
\hline
 - & $E_{{\rm C},h}$ & $E_{{\rm S},h}$ & $E_{{\rm A},h}$ & $\widetilde{E}_{{\rm S},h}$ & $\widetilde{E}_{{\rm A},h}$ \\
\hline
Discrete & $\widehat{\bar{\mathbf{u}}}_h - \bar{\mathbf{u}}_h$ & $(\widehat{\bar{\mathbf{y}}}_h - \bar{\mathbf{y}}_h,\ \widehat{\bar{r}}_h - \bar{r}_h)$ & $(\widehat{\bar{\mathbf{p}}}_h - \bar{\mathbf{p}}_h,\ \widehat{\bar{s}}_h - \bar{s}_h)$ & $(\widehat{\widetilde{\mathbf{y}}}_h - \bar{\mathbf{y}}_h,\ \widehat{\widetilde{r}}_h - \bar{r}_h)$ & $(\widehat{\widetilde{\mathbf{p}}}_h - \bar{\mathbf{p}}_h,\ \widehat{\widetilde{s}}_h - \bar{s}_h)$ \\
\hline
\end{tabular}
\caption{Notation for error quantities at continuous and discrete levels}
\label{tab:error_notation_horizontal}
\end{table}
}
\begin{theorem}
\label{v error equivalence}
Let $(\bar{\bf y},\rb,\bar{\bf p},\sbar,\bar{\bf u})\in {\bf V}\times Q\times {\bf V}\times Q\times {\bf U}_{\rm ad}$ solve \eqref{cstate}-\eqref{cop} and $(\bar{\bf y}_h,\rhb,\bar{\bf p}_h,\shbar,\bar{\bf u}_h)\in{\bf V}_h\times Q_h\times {\bf V}_h\times Q_h\times\bf U_{\rm ad}$ (resp.$(\widehat{\bar{\bf y}}_h,\widehat{\bar{r}}_h,\widehat{\bar{\bf p}}_h,\widehat{\bar{s}}_h,\widehat{\bar{\U}}_h)\in {\bf \widehat{V}}_h\times \widehat{Q}_h\times {\bf \widehat{V}}_h\times \widehat{Q}_h\times {\bf U}_{ad}$) solve \eqref{dstate}-\eqref{dop} with respect to the triangulation $\T$ (resp $\widehat{\T}$). Let $(\widetilde{\bf y},\widetilde{r},\widetilde{\bf p},\widetilde{s})\in {\bf V}\times Q\times {\bf V}\times Q $ solve the auxiliary problem \eqref{cvstate} and \eqref{cvadjoint} and $(\widehat{\widetilde{\bf y}}_h,\widehat{\widetilde{r}}_h,\widehat{\widetilde{\bf p}}_h,\widehat{\widetilde{s}}_h)\in {\bf \widehat{V}}_h\times \widehat{Q}_h\times {\bf \widehat{V}}_h\times \widehat{Q}_h$ solve \eqref{dvstate} and \eqref{dvadjoint}. Then the following error estimates hold.

\begin{enumerate}[label=(\roman*)]
	\item 
	$\|E_{\rm C}\|+\enorm{E_{\rm S}}_{\rm pw}+\enorm{E_{\rm A}}_{\rm pw}\approx \enorm{\widetilde{E}_{\rm S}}_{\pw}+\enorm{\widetilde{E}_{\rm A}}_{\pw}$,
	\item 
	$\|E_{\rm C}\|+\|E_{\rm S}\|+\|E_{\rm A}\|\approx \|\widetilde{E}_{\rm S}\|+\|\widetilde{E}_{\rm A}\|$,
	\item 
	$\|E_{{\rm C},h}\|+\enorm{E_{{\rm S},h}}_{\rm pw}+\enorm{E_{{\rm A},h}}_{\rm pw}\approx \enorm{\widetilde{E}_{{\rm S},h}}_{\rm pw}+\enorm{\widetilde{E}_{{\rm A},h}}_{\rm pw}$,
	\item 
	$\|E_{{\rm C},h}\|+\|{E_{{\rm S},h}}\|+\|E_{{\rm A},h}\|\approx \|\widetilde{E}_{{\rm S},h}\|+\|\widetilde{E}_{{\rm A},h}\|$.
\end{enumerate}
\end{theorem}
\noindent All error notations are as defined in Table \ref{tab:error_notation_horizontal}. The constants involved in the above equivalence are independent of the mesh size $h$ but depend upon regularizing parameter $\alpha$. The explicit constant dependencies will be discussed in Section \ref{proof of error equi}.
\subsubsection{Auxiliary problem for discretised approach}
Define an auxiliary control variables at continuous and discrete levels as
\begin{equation}
\label{2.21}
	\widetilde{\bf{u}}:=\Pi_{[\U_a,\U_b]}\left(-\frac{\bar{\pee}_h}{\alpha}\right)\in {\bf U}_{\rm{ad}}\mbox{ and }\widetilde{\bf{u}}_h:=\Pi_{[\U_a,\U_b]}\widehat{\Pi}_0\left(-\frac{1}{\alpha}\bar{\pee}_h\right).
\end{equation} 
In addition to the auxiliary problems \eqref{cvstate}–\eqref{dvadjoint}, an additional set of auxiliary problems for the state equation in the discretised control approach is introduced. The auxiliary problem at the continuous level consists in finding 
$(\widehat{\bf y},\widehat{r}) \in {\bf V}\times Q $
such that, the auxiliary variables satisfy
\begin{equation}
\label{cdstate}
	\begin{cases}
		a(\widehat{\y},\textbf{v})-b(\textbf{v},\widehat{r})=(\textbf{f}+\widetilde{\bf u},\textbf{v})\mbox{ for all }{\bf v}\in{\bf V},\\
		\quad\quad\quad\quad b(\widehat{\y},q)=0\mbox{ for all }q\in Q.
	\end{cases}
\end{equation}
At the discrete level, the auxiliary variables are introduced as the solution 
$(\widehat{\bf y}_h,\widehat{r}_h)\in {\bf \widehat{V}}_h\times \widehat{Q}_h$
such that

\begin{equation}
\label{ddstate}
	\begin{cases}
		a_h(\widehat{{\y}}_h,\widehat{\bf v}_h)-b_h(\widehat{{\bf v}}_h,\widehat{{r}}
		_h)=(\textbf{f}+\widetilde{\U}_h,\widehat{\bf v}_h)  \mbox{ for all } \widehat{\bf v}_h\in\widehat{\bf V}_h,\\
		\quad\quad\quad\quad\quad\quad b_h(\widehat{\y}_h,\widehat{q}_h)=0 \mbox{ for all } \widehat{q}_h\in\widehat{Q}_h.
	\end{cases}
\end{equation} 
\subsubsection{Error equivalence result for discretised approach}
Define $\widetilde{E}_{C}:=\widetilde{\bf u}-\uhb$, and $\widetilde{E}_{{\rm C},h}:=\widetilde{\bf u}_h-\uhb$. The rest of the notations for error are same as Table \ref{tab:error_notation_horizontal} mentioned in. 
\begin{theorem}
\label{d error equivalence}

Let $\csol\in{\bf V}\times Q\times {\bf V}\times Q\times {\bf U}_{\rm ad}$ solve \eqref{cstate}-\eqref{cop} and $\dsol$ $\in {\bf V}_h\times Q_h\times {\bf V}_h\times Q_h\times {\bf U}_{{h,\rm ad}}$ (resp.$(\widehat{\bar{\bf y}}_h,\widehat{\bar{r}}_h,\widehat{\bar{\bf p}}_h,\widehat{\bar{s}}_h,\widehat{\bar{\U}}_h)\in {\bf \widehat{V}}_h\times \widehat{Q}_h\times {\bf \widehat{V}}_h\times \widehat{Q}_h\times \widehat{\bf U}_{{h,{\rm ad}}}$) solve \eqref{dstate}-\eqref{dop} with respect to the triangulation $\T$ (resp $\widehat{\T}$). Let $(\widetilde{\bf y},\widetilde{r},\widetilde{\bf p},\widetilde{s})\in {\bf V}\times Q\times {\bf V}\times Q$ solve the auxiliary problem \eqref{cvstate} and \eqref{cvadjoint}. Let $(\widehat{\widetilde{\bf y}}_h,\widehat{\widetilde{r}}_h,\widehat{\widetilde{\bf p}}_h,\widehat{\widetilde{s}}_h)\in \widehat{\bf V}_h\times \widehat{Q}_h\times \widehat{\bf V}_h\times \widehat{Q}_h$ solve \eqref{dvstate} and \eqref{dvadjoint}. Let $\widetilde{\bf u}$ and $\widetilde{\bf u}_h$ be as defined in \eqref{2.21}. Then the following error estimates hold.

\begin{enumerate}[label=(\roman*)]
	\item 
	$\|E_{\rm C}\|+\enorm{E_{\rm S}}_{\rm pw}+\enorm{E_{\rm A}}_{\rm pw}\approx \|\widetilde{E}_{\rm C}\|+\enorm{\widetilde{E}_{\rm S}}_{\pw}+\enorm{\widetilde{E}_{\rm A}}_{\pw}$,
	\item 
	$\|E_{\rm C}\|+\|E_{\rm S}\|+\|E_{\rm A}\|\approx \|\widetilde{E}_{\rm C}\|+\|\widetilde{E}_{\rm S}\|+\|\widetilde{E}_{\rm A}\|$,
	\item 
	$\|E_{{\rm C},h}\|+\enorm{E_{{\rm S},h}}_{\rm pw}+\enorm{E_{{\rm A},h}}_{\rm pw}\approx \|\widetilde{E}_{{\rm C},h}\|+\enorm{\widetilde{E}_{{\rm S},h}}_{\rm pw}+\enorm{\widetilde{E}_{{\rm A},h}}_{\rm pw}$,
	\item 
	$\|E_{{\rm C},h}\|+\|{E_{{\rm S},h}}\|+\|E_{{\rm A},h}\|\approx \|\widetilde{E}_{{\rm C},h}\|+\|\widetilde{E}_{{\rm S},h}\|+\|\widetilde{E}_{{\rm A},h}\|$.
\end{enumerate}
\end{theorem}

\subsection{Quasi-optimality of AFEM}
\label{adaptive convergence}

\subsubsection{Adaptive estimator}
For any $K\in\T$, define the local estimators for state, adjoint, and control variables as follows
\begin{align*}
	\mu_{S,K}^2+\rho_{S,\E(K)}^2&:= h_{K}^2\|\mathbf{f}+\bar{\U}_h\|_{K}^2+\sum_{E\in\E_{K}}h_{K}\|[\n_h\bar{\y}_h.t_E]\|_{E}^2,\\
 \mu_{A,K}^2+\rho_{A,\E(K)}^2&:=
	h_{K}^2\|\bar{\y}_h-\y_d\|_{K}^2+\sum_{E\in\E_{K}}h_{K}\|[\n_h\bar{\pee}_h.t_E]\|_{E}^2,\\
    \eta_{C,K}^2&:=h_{K}^2\|\n_h(\uhb-\alpha^{-1}\phb)\|_{K}^2.
\end{align*}

\noindent Thus define global state, adjoint, and control estimator as
\begin{align*}
&\eta_{\rm st}^2(\T):=\sum_{K\in\T}(\mu_{S,K}^2+\rho_{S,\E(K)}^2),\hspace{0.2cm}\eta_{\rm adj}^2(\T):=\sum_{K\in\T}(\mu_{A,K}^2+\rho_{A,\E(K)}^2),\mbox{ and }\eta_{\rm C}^2(\T):=\sum_{K\in\T}\eta_{C,K}^2. 
\end{align*}
The complete estimator $\eta$ is define as
\begin{align}
\label{estidef}
\eta^2(\T):= \eta_{\rm st}^2(\T)+\eta_{\rm adj}^2(\T)\mbox{ for variational approach and }
\eta^2(\T):= \eta_{\rm st}^2( \T)+\eta_{\rm adj}^2(\T)+\eta_{\rm C}^2(\T)
\end{align}
\noindent for discretised approach. The complete volume estimator for the variational and discretised control approach will be considered as $\mu^2(\T):=\sum_{K\in\T}(\mu_{S,K}^2+\mu_{A,K}^2)$ and $\mu^2(\T) :=\sum_{K\in\T}(\mu_{S,K}^2+\mu_{A,K}^2+\eta_{C,K}^2)$, respectively. Section \ref{a pos} proves the reliability and efficiency of the estimators in the sense that
\begin{equation*}
    \|E_C\|^2+\enorm{E_{\rm S}}_{\rm pw}^2+\enorm{E_{\rm A}}_{\rm pw}^2\lesssim \eta^2(\T)\lesssim \|E_C\|^2+\enorm{E_{\rm S}}_{\rm pw}^2+\enorm{E_{\rm A}}_{\rm pw}^2+{\rm osc}_{\rm st}^2(\T)+{\rm osc}_{\rm adj}^2(\T),
\end{equation*}
\noindent where the data oscillation terms corresponding to state and adjoint problems are given by
\begin{align*}
\rm{osc}_{\rm st}^2(\T)&:=\sum_{K\in\T} h_{K}^2\|(1-\Pi_{0})(\mathbf{f}+{\bar{\bf u}}_{\it h})\|_{K}^2\mbox{ and } \rm{osc}_{\rm adj}^2(\T):=\sum_{{\it K}\in\T} {\it h}_{\it K}^2\|(1-\Pi_{0})({\bar{\bf y}}_{\it h}-{\bf y}_d)\|_{\it K}^2.
\end{align*}

Define complete global oscillation as
$\rm{osc}^2(\T):=\rm{osc}_{\rm st}^2(\T)+\rm{osc}_{\rm adj}^2(\T).$
\subsubsection{Axioms of adaptivity} Let $\widehat{\T}\in\mathbb{T}(\T)$. For notational convenience, the quantities $\mu$, $\eta_{\rm st}$, $\eta_{\rm adj}$, and $\eta$ denote the corresponding global error indicators over the triangulation $\mathcal T$, while $\widehat{\mu}$, $\widehat{\eta}_{\rm st}$, $\widehat{\eta}_{\rm adj}$, and $\widehat{\eta}$ denote those over $\widehat{\mathcal T}$, unless stated otherwise.
 For any $k\in\mathbb{N}$, recall $\T_k$ denotes $kth$ refinement of initial triangulation $\T_0$. Let $\mu_k, \eta_{{\rm st},k}, \eta_{{\rm adj},k}, \eta_{k}$ denote corresponding global quantities over triangulation $\T_k$. Let $({\bar{\bf y}}_h,{\bar{\bf p}}_h,\bar{\bf u}_h)$ and $({\widehat{\bar{\bf y}}}_h,{\widehat{\bar{\bf p}}}_h,\widehat{\bar{\bf u}}_h)$ be the solutions of problem  \eqref{dstate}-\eqref{dop} corresponding to triangulations $\T$ and $\widehat{\T}$, respectively. 
Define the distance function as 
\begin{equation*}
	{\bf d} ^2(\T,\widehat{\T}) := \enorm{\widehat{\bar{\y}}_h-\bar{\y}_h}_{\pw}^2 + \enorm{\widehat{\bar{\pee}}_h-\bar{\pee}_h}_{\pw}^2
\end{equation*}
for variational control approach and 
\begin{equation*}
	{\bf d} ^2(\T,\widehat{\T}) := \|\widehat{\bar{\bf u}}_h-\bar{\bf u}_h\|^2+\enorm{\widehat{\bar{\y}}_h-\bar{\y}_h}_{\pw}^2 + \enorm{\widehat{\bar{\pee}}_h-\bar{\pee}_h}_{\pw}^2
\end{equation*}
for discretised control approach. 
We state below the axioms of adaptivity following \cite{CMMD2014, CCRH17}. 

\begin{enumerate}
	\item[\textbf{(A1)}] \textbf{(Stability.)} For all $\T\in\mathbb{T}$ and  $\widehat{\T}\in\mathbb{T}(\T)$. Axiom \textbf{(A1)} explains stability over non-refined elements of $\T$ 
	\begin{equation*}
		|\widehat{\eta}(\T\cap\widehat{\T})-\eta(\T\cap\widehat{\T})|\leq \Lambda_1 {\bf d}(\T,\widehat{\T})\hspace{0.2cm}\mbox{and}\hspace{0.2cm}|\widehat{\mu}(\T\cap\widehat{\T})-\mu(\T\cap\widehat{\T})|\leq h \Lambda_0 {\bf d}(\T,\widehat{\T}).
	\end{equation*}
     \item[\textbf{(A2)}] \textbf{(Reduction.)} The second axiom \textbf{(A2)} introduces the reduction constant $\rho\in(0,1)$, which governs the contraction of the error indicators on the refined elements of $\mathcal T$. Specifically, it holds that
\begin{equation*}
\widehat{\eta}(\widehat{\mathcal T}\setminus\mathcal T)
\leq \rho\,\eta(\mathcal T\setminus\widehat{\mathcal T})
+\Lambda_2\,{\bf d}(\mathcal T,\widehat{\mathcal T}),
\qquad
\widehat{\mu}(\widehat{\mathcal T}\setminus\mathcal T)
\leq 2^{-1/2}\mu(\mathcal T\setminus\widehat{\mathcal T})
+h\Lambda_0\,{\bf d}(\mathcal T,\widehat{\mathcal T}).
\end{equation*}

	\item[\textbf{(A3)}] \textbf{(Discrete reliability.)} Define a set
	\begin{center}
		$\mathcal{R}(\T,\widehat{\T}):=\{K\in \T : \mbox{ there exists } T\in \T\backslash \widehat {\T}\hspace{0.2cm}\mbox{with dist}(K,T)=0\}$. 
	\end{center}
    The discrete reliability shows
	\begin{equation*}
		{\bf d}^2(\T,\widehat{\T})\leq\Lambda_3 \eta^2(\mathcal{R}(\T,\widehat{\T})).
	\end{equation*}
 Note that, $\T\setminus\widehat{\T}\subseteq\mathcal{R}(\T,\widehat{\T})$. The cardinality of $\mathcal{R}(\T,\widehat{\T})$ is same as cardinality of refined elements upto some fixed multiplicative constant \cite{MR2324418}, where $\mathcal{R}(\T,\widehat{\T})$ denotes $\T\setminus\widehat{\T}$ plus one additional layer of elements. 
	\item[\textbf{(A4)}] \textbf{(Quasi-orthogonality.)} $\mbox{ For all } l\in\mathbb{N}_0,$  \textbf{(A4)} states $\sum_{k=l}^{\infty}{\bf d}^2(\T_k,\T_{k+1})\leq\Lambda_4 \eta_{l}^2$.
\end{enumerate}
Here $\Lambda_i, i=0,1,2,3,4$ are universal constants and they are independent of mesh parameter $h$. The adaptive convergence rates then follows from the Theorem \ref{main theorem} below.
\begin{theorem}
\label{main theorem}
	Let {\bf (A1)-(A4)} hold and there exists a $\theta$ such that, $0<\theta<\theta_0:=\frac{1}{1+\Lambda_{1}^2\Lambda_3}$ and $0<\delta\ll 1$ sufficiently small. Then, for the AFEM-generated sequences $(\T_l)_{l\in\mathbb{N}_0} \in \mathbb{T}(\delta)$ and $(\eta_l)_{l\in\mathbb{N}_0}$, the following holds for all $s>0$
\begin{equation*}
\sup_{l\in\mathbb{N}_0}(1+|\T_l|-|\T_0|)^s\,\eta_l \approx 
\sup_{N\in\mathbb{N}_0}(1+N)^s \min_{\T\in\mathbb{T}(N)} \eta(\T).
\end{equation*}
\end{theorem}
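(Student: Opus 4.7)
\medskip

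\noindent\textbf{Proof proposal.} The plan is to follow the axiomatic framework of Carstensen--Feischl--Page--Praetorius \cite{CMMD2014}, specialising their general machinery to the combined state/adjoint estimator $\eta$ together with the distance $\delta(\T,\hat\T)$ introduced above. Since axioms (A1)--(A4) are assumed, the four classical building blocks, namely estimator reduction, linear convergence, optimality of D\"orfler marking, and the approximation-class comparison, must each be derived and then chained together. I therefore plan to proceed in four steps rather than trying to invoke the abstract theorem as a black box, because the sum-structure of $\eta^2 = \eta^2(\bar{\y}_h,\bar\U_h,\T) + \eta^2(\bar\pee_h,\bar\y_h,\T)$ and the definition of $\delta$ in terms of both state and adjoint errors must be handled consistently at every step.

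\medskip

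\noindent\emph{Step 1 (Estimator reduction).} Starting from (A1) and (A2), I would split $\hat\T = (\hat\T \cap \T) \cup (\hat\T \setminus \T)$, apply the triangle inequality together with the Young inequality to absorb the cross terms, and derive an estimate of the form $\hat\eta^2 \le q_1 \eta^2 - (1-q_1)\eta^2(\T\setminus\hat\T) + \Lambda^\ast \delta^2(\T,\hat\T)$ for some $q_1\in(0,1)$ depending on $\rho_2$ and on the chosen Young parameter. Combined with the D\"orfler condition $\eta^2(\mathcal{M}_\ell) \ge \theta \eta_\ell^2$ enforced by the marking module, this yields the contraction $\hat\eta^2 \le q_2 \eta^2 + \Lambda^\ast \delta^2(\T,\hat\T)$ with $q_2 = q_2(\theta,\rho_2)<1$.

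\medskip

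\noindent\emph{Step 2 (Linear convergence).} I would then telescope the estimator-reduction inequality along the adaptive sequence $(\T_\ell)_{\ell\in\mathbb{N}_0}$ and absorb the cumulative $\sum_\ell \delta^2(\T_\ell,\T_{\ell+1})$ contribution using the quasi-orthogonality axiom (A4). A geometric-summation argument of the type used in \cite{CMMD2014} then delivers $\eta_{\ell+k}^2 \le C_{\mathrm{lin}}\,\rho_{\mathrm{lin}}^{k}\,\eta_\ell^2$ with $\rho_{\mathrm{lin}}<1$, i.e.\ the R-linear decay of the total estimator on the adaptive mesh hierarchy.

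\medskip

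\noindent\emph{Step 3 (Optimality of D\"orfler marking and quasi-optimal rates).} This is where the threshold $\theta_0 = 1/(1+\Lambda_1^2\Lambda_3)$ enters. Given any $\T_\ast \in \mathbb{T}(\T_\ell)$ with $\eta_\ast^2 \le \kappa \eta_\ell^2$ for sufficiently small $\kappa$, I would combine (A1) applied to $\T_\ell \cap \T_\ast$ with (A3) applied to $\mathcal{R}(\T_\ell,\T_\ast)$ to conclude that $\mathcal{R}(\T_\ell,\T_\ast)$ satisfies the D\"orfler criterion with parameter $\theta$ whenever $\theta<\theta_0$. The minimality of the marked set chosen by the AFEM then bounds $|\mathcal{M}_\ell| \lesssim |\mathcal{R}(\T_\ell,\T_\ast)| \lesssim |\T_\ast| - |\T_\ell|$. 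Feeding this into the approximation class $\sup_N (1+N)^s \min_{\T\in\mathbb{T}(N)} \eta(\T)$ and using the bounded-overlay/closure estimate of the newest-vertex bisection together with Step 2 gives $|\T_\ell|-|\T_0|\lesssim \eta_\ell^{-1/s}\|\cdot\|_{\mathcal{A}_s}^{1/s}$, which is equivalent to the claimed quasi-optimality.

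\medskip

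\noindent\emph{Main obstacle.} The delicate point will be Step 3, specifically verifying that the discrete-reliability constant $\Lambda_3$ assembled from both state and adjoint contributions (together with the control, which enters through $\delta$ via \eqref{a14}) indeed yields the threshold $\theta_0$ claimed in the statement. A second, more technical, hurdle is that the combined estimator $\eta$ mixes state- and adjoint-type residuals whose refinement indicators live on the same elements but react to different data ($\mathbf{f}+\bar\U_h$ versus $\bar\y_h-\y_d$); one must therefore track both constants $\Lambda_0,\Lambda_1,\Lambda_2$ carefully through Young's inequality so that the final contraction factor $q_2$ is genuinely strictly less than one and independent of $\ell$. Once these two points are handled, the rest of the argument is a direct transcription of the abstract proof in \cite{CMMD2014,CCRH17}.
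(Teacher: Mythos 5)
Your proposal is correct and follows essentially the same route as the paper: the paper does not prove Theorem 2.2 itself but invokes the abstract axiomatic machinery of \cite{CMMD2014,CCSP20}, and your four steps (estimator reduction, linear convergence via (A4), optimality of D\"orfler marking with threshold $\theta_0=1/(1+\Lambda_1^2\Lambda_3)$ via (A1) and (A3), and the approximation-class comparison with newest-vertex bisection) are precisely the argument of that cited framework specialised to the combined state/adjoint estimator and distance $\delta$.
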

\noindent The adaptivity axioms {\bf (A1)}–stability, {\bf (A2)}–reduction, and {\bf (A4)}–quasi-orthogonality together ensure the R-linear convergence of the estimator $\eta(\mathcal T)$; see \cite[Proposition 4.15]{CMMD2014}. The axiom {\bf (A3)}–discrete reliability is responsible for the quasi-optimality of the estimator. Consequently, the axioms {\bf (A1)}–{\bf (A4)} collectively guarantee both convergence and quasi-optimality of the proposed estimator $\eta(\T)$. Further details can be found in \cite{CMMD2014,CCSP20}. The verification of these axioms for the variational and discretised control approaches is carried out in Section \ref{verification of variational axioms} and Section \ref{verification of discretised axioms}, respectively.

\section{Proofs of error equivalence results}
\label{proof of error equi}
\noindent
The main goal of this section is to derive the explicit constant dependencies appearing in the error equivalence results for the variational and discretised approaches stated in Theorem \ref{v error equivalence} and Theorem \ref{d error equivalence}, respectively. The proofs rely on the Poincaré-type inequalities and discrete Sobolev embeddings introduced in the first subsection. The second and third subsections are devoted to the proofs of the corresponding error equivalence results. These results play a crucial role in the analysis and establishment of the a priori and a posteriori error estimates.

\subsection{Preliminaries}

\begin{lemma}(Poincar\'e type inequality)
	\label{Poincare}
	Let ${\bf v}\in {\bf V}$ and ${\bf v}_h\in {\bf V}_h$. Then there exists constants $C_{\rm{P}},C_{\rm{dP}}>0$ such that
	\begin{enumerate}[label=(\roman*)]
		\item $\|{\bf v}\|\leq C_{\rm{P}}\enorm {{\bf v}}_{\pw}$,
		\item $\|{\bf v}_h\|\leq C_{\rm{dP}}\enorm{{\bf v}_h}_{\pw}$.
	\end{enumerate}
\end{lemma}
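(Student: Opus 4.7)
The plan is to prove \textbf{(1)} by the classical Poincar\'e--Friedrichs inequality applied componentwise, and to obtain \textbf{(2)} by splitting $v_h = Jv_h + (v_h - Jv_h)$, where $J:V_h\to V$ is the companion operator of \cref{companion}, and reducing to \textbf{(1)}.

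For \textbf{(1)}, since $V = H_0^1(\Omega)\times H_0^1(\Omega)$, I would apply the standard scalar Poincar\'e inequality to each component $v_i$ of $v = (v_1,v_2) \in V$, obtaining $\|v_i\|_{L^2(\Omega)} \leq C_\Omega \|\nabla v_i\|_{L^2(\Omega)}$ with $C_\Omega$ depending only on $|\Omega|$. Squaring and summing over $i = 1,2$, and noting that the weak gradient equals the broken gradient on $V$, yields the claim with $C_P = C_\Omega$.

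For \textbf{(2)}, I would write $\|v_h\| \leq \|Jv_h\| + \|v_h - Jv_h\|$ and control each term via \cref{companion}(1). Since $Jv_h \in V$, part \textbf{(1)} together with a triangle inequality gives
\begin{equation*}
\|Jv_h\| \leq C_P \|\nabla Jv_h\|_{L^2(\Omega)} \leq C_P\bigl(\|\nabla_h(v_h - Jv_h)\|_{L^2(\Omega)} + \enorm{v_h}_{\pw}\bigr),
\end{equation*}
and the $j=1$ contribution of \cref{companion}(1) applied with $v = 0 \in V$, summed over $K \in \T$, bounds $\|\nabla_h(v_h - Jv_h)\|_{L^2(\Omega)}^2 \leq M\Lambda_J \enorm{v_h}_{\pw}^2$, where $M$ is a shape-regularity patch-overlap constant. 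For the $L^2$ piece, the $j = 0$ term of the same estimate gives $\|v_h - Jv_h\|_{L^2(K)}^2 \leq |K|\Lambda_J \|\nabla_h v_h\|_{L^2(\omega(K))}^2$; summing over $K$ and using the crude but $h$-independent bound $|K| \leq |\Omega|$ yields $\|v_h - Jv_h\|_{L^2(\Omega)}^2 \leq |\Omega|\,M\Lambda_J \enorm{v_h}_{\pw}^2$. Combining these estimates, $\|v_h\| \leq C_{dP}\enorm{v_h}_{\pw}$ with $C_{dP}$ depending only on $|\Omega|$, $\Lambda_J$, $C_P$, and $M$.

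The main subtle point is the weight $|K|^{-1}$ in the $L^2$ bound of \cref{companion}(1), which naively would yield a Poincar\'e constant that depends on the mesh size $h$. I would sidestep this via the global replacement $|K| \leq |\Omega|$; the resulting constant is deliberately not sharp in $h$ but is $h$-independent, which is exactly what a Poincar\'e-type inequality requires. Any residual dependence on $C_I$ quoted in the statement enters implicitly through the construction of the companion operator $J$ in \cref{companion}, which invokes the interpolation $I_h$ of \cref{interpolation}.
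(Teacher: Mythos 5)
Your argument is correct, but it follows a different route from the paper: the paper does not prove \cref{Poincare} at all, it simply cites standard references (the classical Poincar\'e inequality for part (1) and the piecewise-$H^1$ Poincar\'e--Friedrichs inequality of Brenner--Scott for part (2)), whereas you derive the discrete estimate (2) from the paper's own companion operator, \cref{companion}(1), by splitting $v_h = Jv_h + (1-J)v_h$ and reducing to the continuous case (1). Your reduction is sound: choosing $v=0$ in the minimum of \cref{companion}(1) and summing over $K$ does give $\|\n_h(1-J)v_h\|_{L^2(\Omega)}^2 \lesssim \Lambda_J\enorm{v_h}_{\pw}^2$ and, with the crude bound $|K|\le|\Omega|$, $\|(1-J)v_h\|^2 \lesssim |\Omega|\Lambda_J\enorm{v_h}_{\pw}^2$, so the resulting constant is $h$-independent as required; the only implicit ingredient you should state is the finite-overlap constant of the patches $\omega(K)$, which is controlled by shape regularity and is consistent with the constants' stated dependence (the paper's listed dependence on $C_I$ enters only through the construction of $J$, exactly as you observe). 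The trade-off: your proof is self-contained within the paper's toolbox and makes the dependence on $\Lambda_J$ explicit, while the paper's citation route invokes a general piecewise-$H^1$ Poincar\'e inequality that does not need the jump or companion-operator structure and avoids the extra overlap constant.
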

\noindent For the proof of \textit{(i)} refer \cite[Theorem 3]{Evans}. The inequality in \textit{(ii)} is discussed in \cite[Theorem 10.6.15]{Brenner}.

\begin{lemma}(Discrete Sobolev embeddings)\cite[Lemma 3.4]{MR4766712}
	\label{discrete sobolev}
	Let ${\bf V}_h$ and $\widehat{\bf V}_h$ be the nonconforming finite element spaces over the triangulations $\T$ and $\widehat{\T}$, respectively. For any ${\bf v}\in {\bf V}+{\bf V}_h$ and ${\bf v}_h\in {\bf V}_h+\widehat{{\bf V}}_h$ the following estimates hold true
	\begin{enumerate}[label=(\roman*)]
		\item $\|{\bf v}\|\leq C_{\rm{PJ}} \enorm{{\bf v}}_{\pw}$,
		\item $\|{\bf v}_h\|\leq C_{\rm{PI}} \enorm{{\bf v}_h}_{\pw}$.
	\end{enumerate}
\end{lemma}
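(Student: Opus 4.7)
The proof decomposes the argument into a conforming $V$--piece plus a nonconforming correction, using the companion operator $J$ from Lemma \ref{companion} (and its analogue $\hat J \colon \hat V_h \to V$ on the refined triangulation $\hat \T$) together with the continuous Poincar\'e inequality from Lemma \ref{Poincare}(a).

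For (a), fix any decomposition $w = v + v_h$ with $v \in V$ and $v_h \in V_h$, and set $w_J := v + J v_h \in V$. The triangle inequality and Lemma \ref{Poincare}(a) yield
\begin{equation*}
\|w\| \leq \|w_J\| + \|v_h - J v_h\| \leq C_{\rm{P}}\bigl(\enorm{w}_{\pw} + \|\n_h(J v_h - v_h)\|\bigr) + \|v_h - J v_h\|.
\end{equation*}
Apply Lemma \ref{companion}(1) with the minimising test function $v' := -v \in V$, which gives, for each $K \in \T$,
\begin{equation*}
|K|^{-1} \|v_h - J v_h\|^2_{L^2(K)} + \|\n_h(v_h - J v_h)\|^2_{L^2(K)} \leq \Lambda_{\rm{J}} \, \|\n_h(v + v_h)\|^2_{L^2(\omega(K))}.
\end{equation*}
Summing over $K$ and using both the finite overlap of the patches $\omega(K)$ (from shape regularity) and $|K| \leq |\Omega|$ for the $L^2$ piece yields $\|\n_h(J v_h - v_h)\| + \|v_h - J v_h\| \lesssim \enorm{w}_{\pw}$, and (a) follows.

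For (b), fix any decomposition $w_h = v_h + \hat v_h$ with $v_h \in V_h$, $\hat v_h \in \hat V_h$, and set $\tilde w_h := J v_h + \hat J \hat v_h \in V$. The same triangle argument gives
\begin{equation*}
\|w_h\| \leq C_{\rm{P}}\bigl(\enorm{w_h}_{\pw} + \|\n_h(J v_h - v_h)\| + \|\n_h(\hat J \hat v_h - \hat v_h)\|\bigr) + \|v_h - J v_h\| + \|\hat v_h - \hat J \hat v_h\|.
\end{equation*}
The main obstacle is that the decomposition of $w_h$ is not unique, so applying Lemma \ref{companion}(1) naively bounds the corrections only by $\enorm{v_h}_{\pw}$ and $\enorm{\hat v_h}_{\pw}$, which can far exceed $\enorm{w_h}_{\pw}$. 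The plan is to apply Lemma \ref{companion}(1) to each piece with the minimising test function taken from the \emph{other} piece -- $v' := -\hat J \hat v_h \in V$ for the $V_h$ correction and $v' := -J v_h \in V$ for the $\hat V_h$ correction -- and use the identities $v_h + \hat J \hat v_h = w_h + (\hat J \hat v_h - \hat v_h)$ and $\hat v_h + J v_h = w_h + (J v_h - v_h)$ to obtain a coupled pair of inequalities of the form $A \leq \lambda(\enorm{w_h}_{\pw} + B)$, $B \leq \lambda(\enorm{w_h}_{\pw} + A)$, where $A := \|\n_h(v_h - J v_h)\|$ and $B := \|\n_h(\hat v_h - \hat J \hat v_h)\|$. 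The delicate final step is closing this coupled system: algebraic addition gives $A + B \lesssim \enorm{w_h}_{\pw}$ provided the effective coupling constant is strictly less than one, which is achieved by first replacing $v_h$ by its fine-mesh representative $\hat v_h^\ast := \hat I_h J v_h \in \hat V_h$ via Lemma \ref{companion}(2); this localises the residual $v_h - \hat v_h^\ast$ to the refined set $\mathcal R(\T, \hat \T)$ and tightens the effective coupling. The $L^2$ corrections are handled as in (a) via $|K| \leq |\Omega|$, and combining everything yields (b).
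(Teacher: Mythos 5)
Your part (a) is correct and follows essentially the paper's own route: split off the conforming part, apply the continuous Poincar\'e inequality, and absorb the correction $(1-J)v_h$ using the second bound in Lemma~\ref{companion}(1) with the minimising $v\in V$ chosen from the decomposition of $w$; the patch-overlap and $|K|\leq|\Omega|$ bookkeeping you describe is the same as the paper's use of the factor $h\leq|\Omega|^{1/2}$.

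Part (b), however, has a genuine gap at exactly the step you flag as delicate. Your coupled system $A\leq\lambda(\enorm{w_h}_{\pw}+B)$, $B\leq\lambda(\enorm{w_h}_{\pw}+A)$ closes only if the effective coupling constant $\lambda$ is strictly below one, and nothing in the available lemmas delivers that: $\lambda$ is built from $\Lambda_{\rm J}$ (and patch-overlap constants), which are generic shape-regularity constants with no smallness. The proposed remedy via Lemma~\ref{companion}(2) does not help, because the right-hand side of that estimate is the jump seminorm $\sum_{K\in\mathcal{R}(\T,\hat{\T})}|K|^{1/2}\sum_{E\in\E_K}\|[\n_h v_h]_E t_E\|^2_{L^2(E)}$ of the \emph{single piece} $v_h$; restricting to the refined region $\mathcal{R}(\T,\hat{\T})$ localises the residual but does not make the constant small, and this quantity is in no way controlled by $\enorm{w_h}_{\pw}$ with a factor less than one (the decomposition $w_h=v_h+\hat v_h$ is non-unique, so $v_h$ can be arbitrarily nonconforming while $w_h$ is small). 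The paper avoids the coupling altogether by working on the coarse mesh with the interpolation operator $I_h:V+\hat V_h\to V_h$ of Lemma~\ref{interpolation}: the integral-mean property $\n_h I_h=\Pi_{0,\T}\n_h$ gives the best-approximation identity $\enorm{\hat v_h-I_h\hat v_h}_{\pw}=\min_{q\in\mathcal{P}_0(\T)}\|\n_h\hat v_h-q\|\leq\enorm{\hat v_h-v_h}_{\pw}$ with constant exactly one (valid for \emph{any} decomposition, since $\n_h v_h$ is piecewise constant on $\T$), then Lemma~\ref{interpolation}(2) controls $\|\hat v_h-I_h\hat v_h\|$ and the discrete Poincar\'e inequality of Lemma~\ref{Poincare}(2) applies to $v_h-I_h\hat v_h\in V_h$; a triangle inequality finishes with $C_{\rm PI}=2C_{\rm dP}+C_{\rm I}|\Omega|^{1/2}$. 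You should replace your coupled-companion argument for (b) by this one-sided interpolation argument (or supply an entirely different mechanism that does not rely on a smallness assumption on universal constants).
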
 
\subsection{Proof of Theorem \ref{v error equivalence} (Error equivalence result for variational approach)}
\noindent Recall that $(\widetilde{\bf y},\widetilde{r},\widetilde{\bf p},\widetilde{s})\in {\bf V}\times Q\times {\bf V}\times Q$ are the solution of auxiliary problems defined in \eqref{cvstate}-\eqref{cvadjoint}. We observe that \eqref{dstate} and \eqref{dadjoint} are standard finite element approximation of \eqref{cvstate} and \eqref{cvadjoint}, respectively. Subtraction of \eqref{cvstate} and \eqref{cvadjoint} from \eqref{cstate} and \eqref{cadjoint}, respectively gives
\begin{equation}
\label{2.11}
	\begin{cases}
		a(\bar{\y}-\widetilde{\y},\textbf{v})-b(\textbf{v},\bar{r}-\widetilde{r})=(\bar{\U}-\bar{\U}_h,\textbf{v}) & \mbox{ for all } \textbf{v}\in {\bf V},\\
		\quad\quad\quad\quad\quad\hspace{0.5cm} b(\bar{\y}-\widetilde{\y},q)=0 & \mbox{ for all } q\in Q,
	\end{cases}
\end{equation}
and 
\begin{equation}
\label{2.12}
	\begin{cases}
		a(\bar{\pee}-\widetilde{\pee},\textbf{v})+b(\textbf{v},\bar{s}-\widetilde{s})=(\bar{\y}-\bar{\y}_h,\textbf{v}) & \mbox{ for all }\textbf{v}\in {\bf V},\\
		\quad\quad\quad\quad\quad\hspace{0.5cm} b(\bar{\pee}-\widetilde{\pee},q)=0 & \mbox{ for all } q\in Q.
	\end{cases}
\end{equation}

\noindent The stability results \cite[Theorem 5.2]{MR0548867} for \eqref{2.11} and \eqref{2.12} gives
\begin{align}
	\label{cst}
	\enorm{(\bar{\y}-\widetilde{\y},\bar{r}-\widetilde{r})}_{\pw}\leq C_{\rm{cst}} \|\bar{\U}-\bar{\U}_h\|,\\
	\label{cad}
	\enorm{(\bar{\pee}-\widetilde{\pee},\bar{s}-\widetilde{s})}_{\pw}\leq C_{\rm{cad}}\|\bar{\y}-\bar{\y}_h\|.
\end{align}

\noindent Recall that $(\widehat{{\widetilde{\bf y}}}_h,\widehat{\widetilde{r}}_h,\widehat{{\widetilde{\bf p}}}_h,\widehat{\widetilde{s}}_h)\in {\widehat{\bf V}}_h\times \widehat{Q}_h\times {\widehat{\bf V}}_h\times \widehat{Q}_h$ are the solution of auxiliary problems at discrete levels defined in \eqref{dvstate}-\eqref{dvadjoint}. Subtraction of equations \eqref{dvstate} and \eqref{dvadjoint} from equations \eqref{dstate} and \eqref{dadjoint}, respectively, results in
\begin{equation}
	\label{2.17}
	\begin{cases}
		a_h(\widehat{\bar{\y}}_h-\widehat{\widetilde{\y}}_h,\widehat{{\bf v}}_h)-b_h(\widehat{{\bf v}}_h,\widehat{\bar{r}}_h-\widehat{\widetilde{r}}_h)=(\widehat{\bar{\U}}_h-\bar{\U}_h,\widehat{{\bf v}}_h) & \mbox{ for all } \widehat{{\bf v}}_h\in {\bf \widehat{V}}_h,\\
		\quad\quad\quad\quad\quad\quad\quad\quad b_h(\widehat{\bar{\y}}_h-\widehat{\widetilde{\y}}_h,\widehat{{q}}_h)=0 & \mbox{ for all } \widehat{{q}}_h\in \widehat{Q}_h,
	\end{cases}
\end{equation}
and 
\begin{equation}
	\label{2.18}
	\begin{cases}
		a_h(\widehat{\bar{\pee}}_h-\widehat{\widetilde{\pee}}_h,\widehat{{\bf v}}_h)+b_h(\widehat{{\bf v}}_h,\widehat{\bar{s}}_h-\widehat{\widetilde{s}}_h)=(\widehat{\bar{\y}}_h-\bar{\y}_h,\widehat{{\bf v}}_h) &\mbox{ for all } \widehat{{\bf v}}_h\in {\bf \widehat{V}}_h,\\
		\quad\quad\quad\quad\quad\quad\quad\quad\hspace{0.3cm} b_h(\widehat{\bar{\pee}}_h-\widehat{\widetilde{\pee}}_h,\widehat{{q}}_h)=0 & \mbox{ for all }\widehat{{q}}_h\in \widehat{Q}_h.
	\end{cases}
\end{equation}

\noindent The stability result for \eqref{2.17} and \eqref{2.18} shows
\begin{align}
	\label{dst}
	\enorm{(\widehat{\bar{\y}}_h-\widehat{\widetilde{\y}}_h,\widehat{\bar{r}}_h-\widehat{\widetilde{r}}_h)}_{\pw}&\leq
	C_{\rm{dst}}\|\widehat{\bar{\U}}_h-\bar{\U}_h\|,\\
	\label{dad}
	\enorm{(\widehat{\bar{\pee}}_h-\widehat{\widetilde{\pee}}_h,\widehat{\bar{s}}_h-\widehat{\widetilde{s}}_h)}_{\pw}&\leq C_{\rm{dad}} \|\widehat{\bar{\y}}_h-\bar{\y}_h\|.
\end{align}
The estimates mentioned in \eqref{cst}-\eqref{cad} and \eqref{dst}-\eqref{dad} play a crucial role in proving the equivalence of the errors as mentioned below. 
\subsubsection{Proof of Theorem \ref{v error equivalence}\textit{(i)} and \textit{(ii)}} 
\label{(2.1)(i-ii)}
\begin{proof}{\bf Step-1 : (primary estimates)}
Choose ${\bf v}=\bar{\y}-\widetilde{\y} , q=\bar{r}-\widetilde{r}$ in \eqref{2.11} , ${\bf v}=\bar{\pee}-\widetilde{\pee} ,  q=\bar{s}-\widetilde{s}$ in \eqref{2.12}, ellipticity of the bilinear form $a(\cdot,\cdot)$, and Lemma \ref{Poincare}\textit{(i)} to get
\begin{equation}
	\label{3.1}
	\enorm{\bar{\y}-\widetilde{\bf y}}_{\pw}\leq C_{\rm P}\wcerr\hspace{0.2cm}\mbox{and}\hspace{0.2cm}\|\bar{\y}-\widetilde{\bf y}\|\leq C_{1}\wcerr\mbox{ with }C_1:=C_{\rm P}^2.
\end{equation}
Analogous calculations give
\begin{equation}
	\label{3.2}
	\enorm{\bar{\pee}-\widetilde{\pee}}_{\rm pw}\leq C_{\rm P}\wlsterr\hspace{0.2cm}\mbox{and}\hspace{0.2cm}\|\bar{\pee}-\widetilde{\pee}\|\leq C_{1} \wlsterr.
\end{equation}
{\bf Step-2 : (Control based bounds for the state and adjoint errors)}
A use of triangle inequality, along with \eqref{3.1} show
\begin{equation}
	\label{3.3}
	\wsterr\leq C_{\rm P}\wcerr +\wauxsterr\hspace{0.2cm}\mbox{and}\hspace{0.2cm} \wlsterr \leq C_{1}\wcerr+\wlauxsterr.
\end{equation}
 Twice use of the triangle inequality, \eqref{3.1}-\eqref{3.2}, and Lemma \ref{discrete sobolev}\textit{(i)} show
 \begin{equation}
 \label{ppw}
 	\waderr\leq C_{2}(\wcerr+\wauxsterr+\wauxaderr)\mbox{ and }
 \end{equation}
\begin{equation}
	\label{3.5}
	\wladerr\leq C_1\wlsterr+\wlauxaderr\leq C_{3}(\wcerr+\wlauxsterr+\wlauxaderr)
\end{equation}
with $C_2:=\max\{C_1C_{\rm{P}},C_{\rm{PJ}}C_{\rm{P}},1\}$ and $C_3:=\max\{1,C_1^2\}$. The use of triangle inequality and \eqref{cst} yields
\begin{equation}
	\label{3.6}
\wtum\leq C_{4}(\wcerr+\wTUM)\mbox{ with }C_4:=\max\{1,C_{\rm cst}\}.
\end{equation}
Similarly, the triangle inequality, \eqref{cad}, and \eqref{3.3} result in
\begin{equation}
	\label{3.7}
\wmai\leq C_{5}(\wcerr+\enorm{\widetilde{\y}-\bar{\bf y}_h}_{\pw}+\wMAI)\mbox{ and }	\wmai\leq C_{6}(\wcerr+\|{\widetilde{\y}-\bar{\bf y}_h}\|+\wMAI)
\end{equation}
with $C_5:=\max\{1, C_{\rm cad}C_{\rm PJ}, C_{\rm cad}C_{\rm PJ}C_{\rm cst}\}$, $C_6:=\max \{1, C_{\rm cad}, C_{\rm cad}C_{\rm PJ}C_{\rm cst}\}.$ 

{\bf Step-3 : (Upper bound on the control)}
The choice ${\bf z}=\bar{{\bf u}}_h\in {\bf U}_{\rm ad}$ in \eqref{cop} and ${\bf z}_h=\bar{\bf u}\in{\bf U}_{\rm ad}$ in \eqref{dop} yield
\begin{equation*}
	(\alpha \bar{\U}+\bar{\pee},\bar{\U}_h-\bar{\U})\geq 0\hspace{0.3cm}\mbox{and}\hspace{0.3cm} (\alpha \bar{\U}_h+\bar{\pee}_h,\bar{\U}-\bar{\U}_h)\geq 0.
\end{equation*}
Add both displayed inequalities and introduce $\widetilde{\pee}$ to the above equation to get
\begin{align}
\label{new}
	\alpha \cerr &\leq (\bar{\pee}_h-\bar{\pee},\bar{\U}-\bar{\U}_h)=(\bar{\pee}_h-\widetilde{\pee},\bar{\U}-\bar{\U}_h)+(\widetilde{\pee}-\bar{\pee},\bar{\U}-\bar{\U}_h).
\end{align}
Choice of ${\bf v}=\widetilde{\pee}-\bar{\pee}$ in \eqref{2.11} and $q=r-\widetilde{r}$ in \eqref{2.12} give
\begin{equation}
\label{ptil}
(\widetilde{\pee}-\bar{\pee},\bar{\U}-\bar{\U}_h)=a(\bar{\y}-\widetilde{\y},\widetilde{\pee}-\bar{\pee})-b(\widetilde{\pee}-\bar{\pee},\bar{r}-\widetilde{r})=a(\bar{\y}-\widetilde{\y},\widetilde{\pee}-\bar{\pee}).
\end{equation}
Again, the choice of ${\bf v}=\widetilde{\y}-\bar{\y}$ in \eqref{2.12} and $q=\bar{s}-\widetilde{s}$ in \eqref{2.11} results in
\begin{equation*}
    a(\yb-\widetilde{\y},\widetilde{\pee}-\bar{\pee})=(\yb-\yhb,\widetilde{\y}-\yb)-b(\widetilde{\y}-\yb,\sbar-\widetilde{s})=(\yb-\yhb,\widetilde{\y}-\yb).
\end{equation*}
Substitution of the above estimate in \eqref{ptil} leads to $(\widetilde{\pee}-\bar{\pee},\bar{\U}-\bar{\U}_h)=(\yb-\yhb,\widetilde{\y}-\yb)$. Substitution of this into \eqref{new} yields
\begin{align*}
    \alpha \cerr\leq (\bar{\pee}_h-\widetilde{\pee},\bar{\U}-\bar{\U}_h)+(\yb-\yhb,\widetilde{\y}-\yb).
\end{align*}
Some elementary algebra in the above estimate leads to
\begin{align*}
    \alpha \cerr\leq (\bar{\pee}_h-\widetilde{\pee},\bar{\U}-\bar{\U}_h)+(\yb-\widetilde{\y},\widetilde{\y}-\yhb)+\|\widetilde{\y}-\yhb\|^2-\|\yb-\yhb\|^2.
\end{align*}
Use the Cauchy-Schwartz inequality and \eqref{3.1} to get
\begin{align*}
    \alpha \cerr\leq \|\widetilde{\pee}-\bar{\pee}_h\|\|\bar{\U}-\bar{\U}_h\|+C_{1}\|\bar{\bf u}-\bar{\bf u}_h\|\|\widetilde{\y}-\yhb\|+\|\widetilde{\y}-\yhb\|^2.
\end{align*}
Applying H\"older’s inequality with 
$a=\|\widetilde{\bf p}-\bar{\bf p}_h\|$, $b=\|\bar{\bf u}-\bar{\bf u}_h\|$, and $\epsilon=\alpha/2$, 
and again with 
$a=C_{1}\|\bar{\bf u}-\bar{\bf u}_h\|$, $b=\|\widetilde{\bf y}-\bar{\bf y}_h\|$, and $\epsilon=2C_{1}^{2}/\alpha$, 
together with Lemma \ref{discrete sobolev}{\it (i)}, lead to

\begin{equation}
\label{3.8}
\wcerr\leq C_7 (\wlauxaderr+\wlauxsterr)\mbox{ and } \wcerr\leq C_7C_{\rm PJ}(\wauxaderr+\wauxsterr)
\end{equation}
with $C_7^2:=\max\{{2}/{\alpha}^2,2C_1^2/\alpha^2+2/\alpha\}$.

{\bf Step-4 : (Complete upper and lower bounds)}
The use of \eqref{3.3}-\eqref{ppw}, \eqref{3.6}-\eqref{3.7}, and \eqref{3.8} show
\begin{align*}
	\|E_{\rm C}\|+\enorm{E_{\rm S}}_{\rm pw}+\enorm{E_{\rm A}}_{\rm pw}\leq C_{\rm c1}(\enorm{\widetilde{E}_{\rm S}}_{\rm pw}+\enorm{\widetilde{E}_{\rm A}}_{\rm pw})
\end{align*}
with $C_{\rm c1}:=C_7C_{\rm PJ}(2+2C_{\rm P}+2C_4+C_2+C_5+2C_{\rm PJ}).$ Using \eqref{3.3}, \eqref{3.5}-\eqref{3.7}, and \eqref{3.8}, we obtain
\begin{align*}
	\|E_{\rm C}\|+\|{E_{\rm S}}\|+\|E_{\rm A}\|\leq C_{\rm c2}(\|\widetilde{E}_{\rm S}\|+\|\widetilde{E}_{\rm A}\|)\mbox{ with }C_{\rm c2}:=C_7(1+C_1+C_3+C_4+C_6).
\end{align*}
Introducing $(\bar{\bf y}, \bar{r}, \bar{\bf p}, \bar{s})$, triangle inequality, stability results from \eqref{cst}-\eqref{cad}, and Lemma \ref{discrete sobolev}{\it (i)} leads to
\begin{align*}
	\enorm{\widetilde{E}_{\rm S}}_{\rm pw}+\enorm{\widetilde{E}_{\rm A}}_{\rm pw}\leq C_{\rm c3}(\|E_{\rm C}\|+\enorm{E_{\rm S}}_{\rm pw}+\enorm{E_{\rm A}}_{\rm pw})\mbox{ with }C_{\rm c3}:=\max\{1,C_{\rm cst}, C_{\rm cad}C_{\rm PJ}\}.
\end{align*}
Similarly, the triangle inequality and the stability results  \eqref{cst}-\eqref{cad} yield
\begin{align*}
	\|\widetilde{E}_{\rm S}\|+\|\widetilde{E}_{\rm A}\|\leq C_{\rm c4}(\|E_{\rm C}\|+\|{E_{\rm S}}\|+\|E_{\rm A}\|)\mbox{ with }C_{\rm c4}:=\max\{1, C_3C_{\rm cst},C_3C_{\rm cad}\}.
\end{align*}
 This completes the proof of Theorem \ref{v error equivalence}\textit{(i)} and \textit{(ii)}.
 \end{proof}
\subsubsection{Proof of Theorem \ref{v error equivalence}\textit{(iii)} and \textit{(iv)}}
The proofs of \textit{(iii)} and \textit{(iv)} are analogous to those of \textit{(i)} and \textit{(ii)} at the discrete level. Because the Crouzeix–Raviart nonconforming spaces are not nested under mesh refinement, discrete-level equivalence is crucial for discrete reliability; hence, the proof is included for completeness.
\begin{proof}
{\bf Step-1 : (Primary estimates)}
Using \eqref{2.17} and \eqref{2.18} with the choices
$\widehat{{\bf v}}_h=\widehat{\bar{\y}}_h-\widehat{\widetilde{\y}}_h$, $\widehat{{q}}_h=\widehat{\bar{r}}_h-\widehat{\widetilde{r}}_h$ and
$\widehat{{\bf v}}_h=\widehat{\bar{\pee}}_h-\widehat{\widetilde{\pee}}_h$, $\widehat{{q}}_h=\widehat{\bar{s}}_h-\widehat{\widetilde{s}}_h$, respectively together with the ellipticity of the bilinear form $a(\cdot,\cdot)$, and Lemma \ref{Poincare}{\it (ii)}, we obtain
\begin{equation}
	\label{3.9}
	\enorm{\widehat{\bar{\y}}_h-\widehat{\widetilde{\y}}_h}_{\rm{pw}}\leq C_{\rm{dP}}\1\mbox{, }\|\widehat{\bar{\y}}_h-\widehat{\widetilde{\y}}_h\|\leq C_8\1,
\end{equation}
and
\begin{equation}
   \label{3.10}
   \enorm{\widehat{\bar{\pee}}_h-\widehat{\widetilde{\pee}}_h}_{\rm{pw}}\leq C_{\rm{dP}}\w\mbox{, }\|\widehat{\bar{\pee}}_h-\widehat{\widetilde{\pee}}_h\|\leq C_8\w\mbox{ with }C_8:=C_{\rm dP}^2.
\end{equation}
{\bf Step-2 : (Control based bounds for the state and adjoint errors)}
A use of triangle inequality along with equation \eqref{3.9} give
\begin{equation}
	\label{3.11}
	\2\leq C_{\rm{dP}}\1 +\6\hspace{0.2cm}\mbox{and}\hspace{0.2cm} \w \leq C_{8}\1+\q.
\end{equation}
Applying the triangle inequality twice along with \eqref{3.9}-\eqref{3.10}, and Lemma \ref{discrete sobolev}{\it (ii)} yield
\begin{equation}
	\label{3.12}
	\3\leq C_9(\1+\6+\7)\mbox{ with }C_9:=\max\{1, C_{\rm dP}^2C_{\rm PI}\}.
\end{equation}
Again use of triangle inequality twice, with \eqref{3.9}-\eqref{3.10} result in
\begin{equation}
	\label{3.13}
	\x\leq C_{10}(\1+\q+\z)\mbox{ with }C_{10}:=\max\{1,C_8^2\}.
\end{equation}
 Use of triangle inequality and \eqref{dst} yield
\begin{equation}
	\label{3.14}
	\4\leq C_{11} (\1+\8)\mbox{ with }C_{11}:=\max\{1,C_{\rm{dst}}\}.
\end{equation} Analogously, an application of the triangle inequality, Lemma \ref{Poincare}{\it (ii)}, and the stability results from \eqref{dst}-\eqref{dad} leads to
\begin{align}
	\label{3.15}
    &\5\leq C_{12} (\1+\q+\9)\mbox{ with }C_{12}:=\max\{1, C_{\rm dP}C_{\rm dad}C_{\rm dst}\}\\
    \label{s}
	&\mbox{ and }\5\leq C_{13} (\1+\6+\9)\mbox{ with }C_{13}:=C_{\rm PI}C_{12}.
\end{align} 
{\bf Step-3 : (Upper bound on the control)}
An application of the optimality condition \eqref{dop} at the $\widehat{\mathcal T}$ level with the choice ${\bf z}_h=\bar{\bf u}_h\in {\bf U}_{\rm ad}$, together with the same condition at the $\mathcal {T}$ level with ${\bf z}_h=\widehat{\bar{\bf u}}_h\in {\bf U}_{\rm ad}$, leads to

\begin{equation*}
	(\alpha \widehat{\bar{\U}}_h+\widehat{\bar{\pee}}_h,\bar{\U}_h-\widehat{\bar{\U}}_h)\geq 0\hspace{0.3cm}\mbox{and}\hspace{0.3cm} (\alpha \bar{\U}_h+\bar{\pee}_h,\widehat{\bar{\U}}_h-\bar{\U}_h)\geq 0.
\end{equation*}

\noindent Add the displayed inequalities and introduce the auxiliary variable $\widehat{\widetilde{\pee}}_h$ to obtain
\begin{equation}
\label{neww}
	\alpha\1^2\leq (\phb-\widehat{\bar{\pee}}_h,\widehat{\bar{\U}}_h-\uhb)=(\phb-\widehat{\widetilde{\pee}}_h,\widehat{\bar{\U}}_h-\uhb)+(\widehat{\widetilde{\pee}}_h-\widehat{\bar{\pee}}_h,\widehat{\bar{\U}}_h-\uhb).
\end{equation} 
Choice of $\widehat{{\bf v}}_h=\widehat{\widetilde{\bf p}}_h-{\widehat{\bar{\bf p}}}_h$ in \eqref{2.17} and $\widehat{{q}}_h=\widehat{\bar{r}}_h-\widehat{\widetilde{r}}_h$ in \eqref{2.18} give
\begin{equation}
\label{328}
    (\widehat{\bar{\bf u}}_h-\bar{\bf u}_h, \widehat{\widetilde{\bf p}}_h-\widehat{\bar{\bf p}}_h)=a_h(\widehat{\bar{\bf y}}_h-\widehat{\widetilde{\bf y}}_h,\widehat{\widetilde{\bf p}}_h-\widehat{\bar{\bf p}}_h)-b_h(\widehat{\widetilde{\bf p}}_h-{\widehat{\bar{\bf p}}}_h, \widehat{\bar r}_h-\widehat{\widetilde{r}}_h)=a_h(\widehat{\bar{\bf y}}_h-\widehat{\widetilde{\bf y}}_h,\widehat{\widetilde{\bf p}}_h-\widehat{\bar{\bf p}}_h).
\end{equation}
Again choice of $\widehat{{\bf v}}_h=\widehat{\widetilde{\bf y}}_h-\widehat{\bar{\bf y}}_h$ in \eqref{2.18} and $\widehat{q}_h=\widehat{\bar{s}}_h-\widehat{\widetilde{s}}_h$ in \eqref{2.17} yield
\begin{equation*}
    a_h(\widehat{\bar{\bf y}}_h-\widehat{\widetilde{\bf y}}_h,\widehat{\widetilde{\bf p}}_h-\widehat{\bar{\bf p}}_h)=(\widehat{\bar{\bf y}}_h-\bar{\bf y}_h, \widehat{\widetilde{\bf y}}_h-\widehat{\bar{\bf y}}_h)-b_h(\widehat{\widetilde{\bf y}}_h-\widehat{\bar{\bf y}}_h, \widehat{\bar{s}}_h-\widehat{\widetilde{s}}_h)=(\widehat{\bar{\bf y}}_h-\bar{\bf y}_h, \widehat{\widetilde{\bf y}}_h-\widehat{\bar{\bf y}}_h).
\end{equation*}
Substituting above estimate in \eqref{328} results in $(\widehat{\bar{\bf u}}_h-\bar{\bf u}_h, \widehat{\widetilde{\bf p}}_h-\widehat{\bar{\bf p}}_h)=(\widehat{\bar{\bf y}}_h-\bar{\bf y}_h, \widehat{\widetilde{\bf y}}_h-\widehat{\bar{\bf y}}_h)$. Substitution of this in \eqref{neww} yield
\begin{equation*}
    \alpha\1^2\leq (\phb-\widehat{\widetilde{\pee}}_h,\widehat{\bar{\U}}_h-\uhb)+(\widehat{\bar{\bf y}}_h-\bar{\bf y}_h, \widehat{\widetilde{\bf y}}_h-\widehat{\bar{\bf y}}_h).
\end{equation*}
Some elementary calculation in above estimate leads to
\begin{equation*}
    \alpha\1^2\leq (\phb-\widehat{\widetilde{\pee}}_h,\widehat{\bar{\U}}_h-\uhb)+(\widehat{\bar{\bf y}}_h-\widehat{\widetilde{\bf y}}_h, \widehat{\widetilde{\bf y}}_h-\bar{\bf y}_h)+\|\widetilde{\bf y}_h-\bar{\bf y}_h\|^2-\|\widehat{\bar{\bf y}}_h-\bar{\bf y}_h\|^2.
\end{equation*}
Cauchy-Schwartz inequality and \eqref{3.9} give
\begin{equation*}
    \alpha\1^2\leq \|\widehat{\widetilde{\pee}}_h-\phb\|\|\widehat{\bar{\U}}_h-\uhb\|+C_{8}\|\widehat{\bar{\U}}_h-\uhb\|\|\widehat{\widetilde{\bf y}}_h-\bar{\bf y}_h\|+\|\widetilde{\bf y}_h-\bar{\bf y}_h\|^2.
\end{equation*}
Applying Young’s inequality with $a=\|\widehat{\widetilde{\pee}}_h-\phb\|$, $b=\|\widehat{\bar{\U}}_h-\uhb\|$, and $\epsilon=\alpha/2$, and again with $a=C_{8}\|\widehat{\bar{\U}}_h-\uhb\|$, $b=\|\widehat{\widetilde{\bf y}}_h-\bar{\bf y}_h\|$, and $\epsilon=2C_8^2/\alpha$, together with Lemma \ref{discrete sobolev}{\it (ii)}, yields

\begin{equation}
\label{c15}
    \1\leq C_{14}(\|\widehat{\widetilde{\bf p}}_h-\bar{\bf p}_h\|+\|\widehat{\widetilde{\bf y}}_h-\bar{\bf y}_h\|)\mbox{ and }\1\leq C_{15}(\enorm{\widehat{\widetilde{\bf p}}_h-\bar{\bf p}_h}_{\rm pw}+\enorm{\widehat{\widetilde{\bf y}}_h-\bar{\bf y}_h}_{\rm pw})
\end{equation}
with $C_{14}:=\max\{{2}/{\alpha}^2,2C_8^2/\alpha^2+2/\alpha\}$ and $C_{15}:=C_{\rm PI}C_{14}$. \\
{\bf Step-4 : (Complete upper and lower bounds)}
Use \eqref{3.11}-\eqref{3.12}, and \eqref{3.14}, \eqref{s}, and \eqref{c15} to obtain
\begin{align*}
	\|E_{{\rm C},h}\|+\enorm{E_{{\rm S},h}}_{\rm pw}+\enorm{E_{{\rm A},h}}_{\rm pw}\leq C_{\rm c5}(\enorm{\widetilde{E}_{{\rm S},h}}_{\rm pw}+\enorm{\widetilde{E}_{{\rm A},h}}_{\rm pw})
\end{align*}
\noindent with $C_{\rm c5}:=C_{14}(1+C_{\rm dP}+C_9C_{\rm dP}+C_{11}+C_{12})$. Use of  \eqref{3.11}, \eqref{3.13}-\eqref{3.15}, and \eqref{c15} leads to
\begin{align*}
	\|E_{{\rm C},h}\|+\|E_{{\rm S},h}\|+\|E_{{\rm A},h}\|\leq C_{\rm c6}(\|\widetilde{E}_{{\rm S},h}\|+\|\widetilde{E}_{{\rm A},h}\|)\mbox{ with }C_{\rm c6}:=C_{15}(1+C_8+C_{10}+C_{11}+C_{13}).
\end{align*}
Following the analogous calculation as we have done in Step-4 of Section \ref{(2.1)(i-ii)} at discrete level by introducing $(\widehat{\bar{\bf y}}_h,\widehat{\bar{r}}_h,\widehat{\bar{\bf p}}_s,\widehat{\bar{s}}_h)$ results in 
\begin{align*}
	\enorm{\widetilde{E}_{{\rm S},h}}_{\rm pw}+\enorm{\widetilde{E}_{{\rm A},h}}_{\rm pw}\leq C_{\rm c7}(\|E_{{\rm C},h}\|+\enorm{E_{{\rm S},h}}_{\rm pw}+\enorm{E_{{\rm A},h}}_{\rm pw})\mbox{ with }C_{\rm c7}:=\max\{1, C_{\rm dst},C_{\rm dad}C_{\rm PI}\}\mbox{ and }
\end{align*}
\begin{align*}
	\|\widetilde{E}_{{\rm S},h}\|+\|\widetilde{E}_{{\rm A},h}\|\leq C_{\rm c8}(\|E_{{\rm C},h}\|+\|E_{{\rm S},h}\|+\|E_{{\rm A},h}\|)\mbox{ with }C_{\rm c8}:=\max\{1, C_{10}C_{\rm dst}, C_{10}C_{\rm dad}\}.
\end{align*}This completes the proof of Theorem \ref{v error equivalence}. 
 \end{proof}

\subsection{Proof of Theorem \ref{d error equivalence} (Error equivalence result for discretised approach)}
\noindent Recall $(\widehat{{\bf y}},\widehat{r},\widehat{\bf y}_h,\widehat{r}_h)\in {\bf V}\times Q\times {\widehat{\bf V}}_h\times {\widehat{Q}}_h$ are the solution of continuous and discrete auxiliary problems defined in \eqref{cdstate}-\eqref{ddstate}. Subtraction of \eqref{cdstate}-\eqref{ddstate} from \eqref{cstate},\eqref{dstate}(at ${\widehat{\T}}$ level), respectively give
\begin{equation}
	\label{2.24}
	\begin{cases}
		a(\bar{\y}-\widehat{\y},\textbf{v})-b(\textbf{v},\bar{r}-\widehat{r})=(\bar{\U}-\widetilde{\bf u},\textbf{v}) & \mbox{ for all } \textbf{v}\in {\bf V},\\
		\quad\quad\quad\quad\quad\hspace{0.3cm} b(\bar{\y}-\widehat{\y},q)=0 & \mbox{ for all } q\in Q,
	\end{cases}
\end{equation}
and
\begin{equation}
	\label{2.25}
	\begin{cases}
		a_h(\widehat{{\bar{\y}}}_h-\widehat{\y}_h,\widehat{\textbf{v}}_h)-b_h(\widehat{\textbf{v}}_h,\widehat{\bar{{r}}}_h-\widehat{r}_h)=(\widehat{\bar{{\U}}}_h-\widetilde{\bf u}_h,\widehat{\textbf{v}}_h) & \mbox{ for all } \widehat{\textbf{v}}_h\in \widehat{V}_h,\\
		\quad\quad\quad\quad\quad\quad\quad\quad b_h(\widehat{{\bar{\y}}}_h-\widehat{\y}_h,\widehat{q}_h)=0 & \mbox{ for all } \widehat{q}_h\in \widehat{Q}_h. 
	\end{cases}
\end{equation}
\noindent Subtraction of \eqref{cvstate} and \eqref{dvstate} from \eqref{cdstate} and \eqref{ddstate}, respectively give

\begin{equation}
	\label{332}
	\begin{cases}
		a(\widehat{\y}-\widetilde{\y},\textbf{v})-b(\textbf{v},\widehat{r}-\widetilde{r})=(\widetilde{\U}-\widetilde{\bf u}_h,\textbf{v}) & \mbox{ for all } \textbf{v}\in {\bf V},\\
		\quad\quad\quad\quad\quad\hspace{0.3cm} b(\widehat{\y}-\widetilde{\y},q)=0 & \mbox{ for all } q\in Q,
	\end{cases}
\end{equation}
and
\begin{equation}
	\label{333}
	\begin{cases}
		a_h(\widehat{{{\y}}}_h-\widehat{\widetilde{\y}}_h,\widehat{\textbf{v}}_h)-b_h(\widehat{\textbf{v}}_h,\widehat{{{r}}}_h-\widehat{\widetilde{r}}_h)=(\widetilde{\bf u}_h-\uhb,\widehat{\textbf{v}}_h) & \mbox{ for all } \widehat{\textbf{v}}_h\in \widehat{V}_h,\\
		\quad\quad\quad\quad\quad\quad\quad\quad b_h(\widehat{{{\y}}}_h-\widehat{\widetilde{\y}}_h,\widehat{q}_h)=0 & \mbox{ for all } \widehat{q}_h\in \widehat{Q}_h. 
	\end{cases}
\end{equation}


The proof of Theorem~\ref{d error equivalence} follows the same steps as those of the previous Theorem~\ref{v error equivalence}. Due  to the discrete control approach, some modifications are required; therefore, we highlight only the main differences here. For completeness, the full proof is provided in Appendix~\ref{appendix}.
\subsubsection{Proof of Theorem \ref{d error equivalence}{\it (i)-(ii)}} 
\label{(2.2)(i)-(ii)}
\begin{proof}
{\bf Step-1 : (Primary estimates)}
Choose ${\bf v}=\bar{\y}-\widetilde{\bf y}, q=\bar{r}-\widetilde{r}$ in \eqref{2.11}, ${\bf v}=\bar{\y}-\widehat{\y}, q=\bar{r}-\widehat{r}$ in \eqref{2.24}, and ${\bf v}=\bar{\pee}-\widetilde{\pee}, q=\bar{s}-\widetilde{s}$ in \eqref{2.12} along with the ellipticity of bilinear form $a$ to get
\begin{equation}
	\label{3.17}
	\enorm{\bar{\y}-\widetilde{\y}}_{\rm pw}\leq C_{\rm P}\|\bar{{\bf u}}-\bar{{\bf u}}_h\|, \enorm{\bar{\y}-\widehat{\y}}_{\rm pw}\leq C_{\rm P}\|\bar{{\bf u}}-\widetilde{{\bf u}}\|,\mbox{ and }\enorm{\bar{\pee}-\widetilde{\pee}}_{\rm pw}\leq C_{\rm P}\|\bar{{\bf y}}-\bar{{\bf y}}_h\|. 
\end{equation}
Lemma \ref{Poincare}{\it(i)} and \eqref{3.17} give
\begin{equation}
	\label{3.18}
	\|\bar{\y}-\widetilde{\y}\|\leq C_{\rm P}^2\|\bar{{\bf u}}-\bar{{\bf u}}_h\|, \|\bar{\y}-\widehat{\y}\|\leq C_{\rm P}^2\|\bar{{\bf u}}-\widetilde{{\bf u}}\|,\mbox{ and }\|\bar{\pee}-\widetilde{\pee}\|\leq C_{\rm P}^2\|\bar{{\bf y}}-\bar{{\bf y}}_h\|. 
\end{equation}

{\bf Step-2 : (Control based bounds for the state and adjoint errors)} The proof strategy and results  of this step are the same as those in {\bf Step~2} of Theorem~\ref{v error equivalence}, part~\textit{(i)}.

{\bf Step-3 : (Upper bound on the control)}
Definition of $\widetilde{\bf u}$ from \eqref{2.21}, \cite[Theorem 7.1.2]{kesavann}, and \eqref{cop} result in
\begin{equation*}
	(\alpha\widetilde{\bf u}+\bar{\pee}_h, {\bf z}-\widetilde{\bf u})\geq 0\mbox{ and }(\alpha\bar{\bf u}+\bar{\pee},{\bf z}-\bar{\bf u})\geq 0\mbox{ for all }{\bf z}\in {\bf U}_{\rm ad}. 
\end{equation*}
Choice of  ${\bf z}=\bar{\bf u}\in {\bf U}_{\rm ad}\mbox{ and }{\bf z}=\widetilde{\bf u}\in {\bf U}_{\rm ad}$ in the above mentioned equation, respectively with some algebraic calculations give
\begin{equation*}
	\alpha\|\bar{\bf u}-\widetilde{\bf u}\|^2\leq (\bar{\bf p}-\bar{\bf p}_h, \widetilde{\bf u}-\bar{\bf u})=(\widetilde{\bf p}-\bar{\bf p},\bar{\bf u}-\widetilde{\bf u})+(\widetilde{\bf p}-\bar{\bf p}_h,\widetilde{\bf u}-\bar{\bf u}). 
\end{equation*}
Choice of ${\bf v}=\widetilde{\pee}-\bar{\bf p}$ in \eqref{2.24} and $q=\bar{r}-\widehat{r}$ in \eqref{2.12} give
\begin{equation*}
	\alpha\|\bar{\bf u}-\widetilde{\bf u}\|^2\leq a(\bar{\bf y}-\widehat{\bf y},\widetilde{\bf p}-\bar{\bf p})+(\widetilde{\bf p}-\bar{\bf p}_h,\widetilde{\bf u}-\bar{\bf u})=a(\widetilde{\bf p}-\bar{\bf p},\bar{\bf y}-\widehat{\bf y})+(\widetilde{\bf p}-\bar{\bf p}_h,\widetilde{\bf u}-\bar{\bf u}). 
\end{equation*}
Again choose ${\bf v}=\widehat{\bf y}-\bar{\bf y}$ in \eqref{2.12} and $q=\bar{s}-\widetilde{s}$ in \eqref{2.24} to get
\begin{equation*}
	\alpha\|\bar{\bf u}-\widetilde{\bf u}\|^2\leq(\bar{\bf y}-\bar{\bf y}_h,\widehat{\bf y}-\bar{\bf y})+(\widetilde{\bf p}-\bar{\bf p}_h,\widetilde{\bf u}-\bar{\bf u}).
\end{equation*}
Some algebraic calculations yield
\begin{equation*}
	\alpha\|\bar{\bf u}-\widetilde{\bf u}\|^2\leq (\bar{\bf y}-\widehat{\bf y},\widehat{\bf y}-\bar{\bf y}_h)+\|\widehat{\bf y}-\bar{\bf y}_h\|^2+(\widetilde{\bf p}-\bar{\bf p}_h,\widetilde{\bf u}-\bar{\bf u}).
\end{equation*}
By applying the Cauchy-Schwarz inequality, the triangle inequality, and \eqref{3.18}, we obtain
\begin{equation*}
 \alpha\|\bar{\bf u}-\widetilde{\bf u}\|^2\leq C_{\rm P}^2\|\bar{\bf u}-\widetilde{\bf u}\|(\|\widehat{\y}-\widetilde{\y}\|+\|\widetilde{\y}-\yhb\|)+2(\|\widehat{\y}-\widetilde{\y}\|^2+\|\widetilde{\y}-\yhb\|^2)+\|\widetilde{\bf p}-\bar{\bf p}_h\|\|\widetilde{\bf u}-\bar{\bf u}\|.
\end{equation*}
Choice of ${\bf v}=\widehat{\y}-\widetilde{\y}$ and $q=\widehat{r}-\widetilde{r}$ in \eqref{332} along with the ellipticity of bilinear form $a$ leads to
\begin{equation*}
	\alpha\|\bar{\bf u}-\widetilde{\bf u}\|^2\leq C_{\rm P}^2\|\bar{\bf u}-\widetilde{\bf u}\|\|\widetilde{\bf u}-\bar{\bf u}_h\|+C_{\rm P}^2\|\widetilde{\bf y}-\bar{\bf y}_h\|\|\bar{\bf u}-\widetilde{\bf u}\|+2\|\widetilde{\bf u}-\bar{\bf u}_h\|^2+2\|\widetilde{\bf y}-\bar{\bf y}_h\|^2+\|\widetilde{\bf p}-\bar{\bf p}_h\|\|\widetilde{\bf u}-\bar{\bf u}\|. 
\end{equation*}
Applying Young’s inequality with $a=C_{\rm P}^2\|\bar{\bf u}-\widetilde{\bf u}\|$, $b=\|\bar{\bf u}-\widetilde{\bf u}\|$, and $\epsilon={3C_{\rm P}^4}/{\alpha}$, with $a=C_{\rm P}^2\|\widetilde{\bf y}-\bar{\bf y}_h\|$, $b=\|\bar{\bf u}-\widetilde{\bf u}\|$, and $\epsilon={\alpha}/{3}$, and with $a=\|\widetilde{\bf p}-\bar{\bf p}_h\|$, $b=\|\widetilde{\bf u}-\bar{\bf u}\|$, and $\epsilon={\alpha}/{3}$ in the above equations, together with some algebraic manipulations, yields
\begin{equation}
	\label{3.22}
	\|\bar{\bf u}-\widetilde{\bf u}\|\leq C_{20}(\|\widetilde{\bf u}-\bar{\bf u}_h\|+\|\widetilde{\bf y}-\bar{\bf y}_h\|+\|\widetilde{\bf p}-\bar{\bf p}_h\|)
\end{equation}
with $C_{20}:=\max\{\sqrt{\frac{2}{\alpha}(\frac{3C_{\rm P}^4}{\alpha}+2)},\frac{\sqrt{3}}{\alpha}\}$. Triangle inequality, \eqref{3.22}, and Lemma \ref{discrete sobolev}{\it (i)} give
\begin{equation}
	\label{3.23}
	\wcerr\leq C_{21}(\normone+\wauxsterr+\wauxaderr)\mbox{ with }C_{21}:=C_{\rm PJ}(1+C_{20}).
\end{equation}

{\bf Step-4 : (Complete upper and lower bounds)} The upper bound is obtained by combining the results of Steps 1–3, as in the previous theorem. For the lower bound, an additional term appears compared to the previous result, which is estimated as follows:
Applying triangle inequality, by introducing $\bar{\bf u}$, using \eqref{defu} and \eqref{2.21}, and Lipschitz continuity of projection operator \cite[Proposition 2.5, Step-3]{KKRK2014}, $\Pi_{[{\bf u}_a,{\bf u}_b]}$,(with Lipschitz constant $\alpha^{-1}$) gives
\begin{align*}
	\|\widetilde{\bf u}-\bar{\bf u}_h\|=\|\widetilde{E}_{\rm C}\| \leq C_{\rm d3}(\|E_{\rm C}\|+\enorm{\bar{\bf p}-\bar{\bf p}_h}_{\rm pw})\mbox{ and }\|\widetilde{E}_{\rm C}\| \leq C_{\rm d4}(\|E_{\rm C}\|+\|\bar{\bf p}-\bar{\bf p}_h\|).
\end{align*}
This concludes the proof.
\end{proof}
{Proof of Theorem \ref{d error equivalence}{\it (iii)-(iv)}}
follows arguments analogous to those used in the proof of Theorem \ref{v error equivalence} {\it(iii)-(iv)}. Whenever discrete control terms arise, we refer to the proof of Theorem \ref{d error equivalence}{\it (i)-(ii)}. For completeness, the full proof is provided in the Appendix \ref{appendix}.

\noindent The error equivalence results derived in this section will be instrumental in proving the a priori and a posteriori error estimates in Section~\ref{error control}.
\section{Error estimates}
\label{error control}
\subsection{A priori error estimates}
This section aims to study explicit a priori error estimates for the model problem \eqref{nfunc}-\eqref{nst}. The main idea used to prove the a priori estimates for the optimal control problem is to combine the error equivalence result from Theorem \ref{v error equivalence} and \ref{d error equivalence} with the standard a priori estimates known for the Stokes problem. 

For given ${\bf g}\in
{\bf L}^2(\Om)$, let $(\y,r)\in{\bf V}\times Q$ and $(\y_h,r_h)\in ({\bf V}_h,Q_h)$ solves the continuous and discrete formulations of the Stokes equations given by
\begin{equation}
	\label{4.1}
	\begin{cases}
		a(\y,\textbf{v})-b(\textbf{v},r)=({\bf g},\textbf{v}) &\mbox{ for all } \textbf{v}\in V,\\
		\quad\quad\quad\hspace{0.45cm} b(\y,q)=0 &\mbox{ for all }q\in Q,
	\end{cases}
\end{equation}
and 
\begin{equation}
	\label{4.2}
	\begin{cases}
		a_h(\y_h,\textbf{v}_h)-b_h(\textbf{v}_h,r_h)=({\bf g},\textbf{v}_h) &\mbox{ for all }\textbf{v}_h\in V_h,\\
		\quad\quad\quad\quad\hspace{0.6cm} b_h(\y_h,q_h)=0 &\mbox{ for all }q_h\in Q_h.
	\end{cases}
\end{equation}
\noindent It is well known that the solution of the above problems belongs to $({\bf V}\cap {\bf H}^{1+s}(\Om)\times Q\cap H^{s}(\Omega))$. Here $s\in (0,1]$ is the elliptic regularity index.\\

\begin{lemma}(Interpolation operator)
	\label{interpolation}
	Let $\widehat{\T}$ be a shape regular refinement of $\T$. The nonconforming interpolation operator $ I_h : {\bf V}+{\widehat{\bf V}}_h\to {\bf V}_h$ and $\widehat{I}_h : {\bf V}\to {\widehat{\bf V}}_h$ satisfy the following properties
	\begin{enumerate}[label=(\roman*)]
		\item The integral mean value property that is $\n_h I_h =\Pi_{0} \n_h.$
		\item For all ${\bf v}\in {\bf H}^{1+s}(\Om)$ with $s\in [0,1]$, $I_h$ satisfy the following estimate. 
        \begin{equation*}
            \|(1-I_h){\bf v}\|_{K}+h_{K}\|(1-I_h){\bf v}\|_{1,K}\leq C_{\rm I}h_{K}^{1+s}\|{\bf v}\|_{1+s,K}. 
        \end{equation*}
		\item The orthogonality condition, $a_h({\bf v}_h,{\widehat{\bf v}}_h-I_h{\widehat{\bf v}}_h)=0$ for all ${\bf v}_h\in {\bf V}_h$ and ${\widehat{\bf v}}_h\in{\widehat{\bf V}}_h+{\bf V}$.
		\item $(1-I_h){\widehat{\bf v}}_h=0$ and $(\widehat{I}_h-I_h)({\bf v}+{\widehat{\bf v}}_h)=0$ in $\T\cap\widehat{\T}$ for all ${\bf v}\in {\bf V}$ and ${\widehat{\bf v}}_h\in {\widehat{\bf V}}_h$.
		\item For any given ${\bf v}_h\in {\bf V}_h$ and $q_h\in Q_h$,  $(\mbox{\rm div}_h(I_h{\bf v}_h-{\bf v}_h), q_h)=0$ holds true. 
	\end{enumerate}
\end{lemma}

\begin{proof}
For the explicit details of proof of properties {\it(i)} to \textit{(iv)} refer  \cite[Lemma 13]{MR3790080}, \cite[Lemma 36.1]{MR4269305}, \cite[Section 4.3]{CCSP20}, and \cite[Section 4.4]{CCSP20}, respectively. 
The proof of property \textit{(v)} is included below. Use the definition of discrete divergence operator, integral mean value property as mentioned in {\it (i)} (component wise) and the property of projection operator to obtain
\begin{equation*}
	(\mbox{div}_h I_h {\bf v}_h,q_h)=\sum_{K\in\T}\int_{K}{\rm div} I_h {\bf v}_h q_h\,dx=\sum_{K\in\T}\int_{K}\Pi_{0}{\rm div} {\bf v}_h q_h\,dx=\sum_{K\in\T}\int_{K}{\rm div} {\bf v}_h q_h\,dx=({\rm div}_h {\bf v}_h,q_h).
\end{equation*}
\end{proof}
 
\begin{lemma}(Error estimate for the Stokes equation).
	\label{lemma 4.1}
	For a given ${\bf g}\in {\bf L}^2(\Om)$, let $({\bf y},r)\in {\bf V}\times Q$ and $({\bf y}_h,r_h)\in {\bf V}_h\times Q_h$ satisfy \eqref{4.1} and \eqref{4.2}, respectively. Then for ${\bf y}\in {\bf H}^{1+s}(\Om)$ and $r\in H^s(\Omega)$ with $s\in (0,1]$, the a priori error estimates stated below hold true.
    \begin{equation*}
		\enorm{{\bf y}-{\bf y}_h}_{\pw}+\|r-r_h\|\leq C_{\rm{ae}}h^{s} (\|{\bf y}\|_{1+s}+\|r\|_{s}+\|(1-\Pi_{0}){\bf g}\|).
	\end{equation*}
    Further, $L^2$ error estimate reads
    \begin{equation*}
        \|{\bf y}-{\bf y}_h\|\leq C_{\rm ael} h^s(\enorm{({\bf y}-{\bf y}_h,r-r_h)}_{\rm pw}+\sum_{K\in\T}h_K\|(1-\Pi_0){\bf g}\|).
    \end{equation*}
\end{lemma}
These estimates for the Stokes system are based on the best approximation result from \cite[Theorem 3.1]{MR3349689} along with standard interpolation estimates stated in Lemma \ref{interpolation} and \cite[Proposition 1.135]{ErnJLU_2004}. For $L^2$-estimates we can refer \cite[Theorem 4.1]{MR3194820}.


\subsubsection{A priori error estimates of optimal control problem (variational approach)}
\begin{theorem}
	\label{Theorem-4.1}
	Let $(\bar{\bf y},\rb,\bar{\bf p},\sbar,\ub)$ be the solution of \eqref{cstate}-\eqref{cop} such that $(\bar{\bf y},\rb,\bar{\bf p},\sbar)\in [({\bf V}\cap {\bf H}^{1+s}(\Om)\times Q\cap H^{s}(\Omega))]^2$ and $\ub\in {\bf U}_{\rm{ad}}$. Let $(\bar{\bf y}_h,\rhb,\bar{\bf p}_h,\shbar,\uhb)\in {\bf V}_h\times Q_h\times {\bf V}_h\times Q_h\times {\bf U}_{\rm ad}$ solve \eqref{dstate}-\eqref{dop}. Then the a priori error estimates mentioned below hold true
	\begin{equation*}
		\|E_{\rm C}\|+\enorm{E_{\rm S}}_{\rm pw}+\enorm{E_{\rm A}}_{\rm pw}\leq C_{\rm{AE}} h^s (\|{\bf f}\|+|\Om|^{1/2}\max\{|\U_a|,|\U_b|\}+\|{\bf y}_d\|).
	\end{equation*}
\end{theorem}
\begin{proof}
\noindent Let $(\widetilde{\y},\widetilde{r},\yhb,\rhb)\in {\bf V}\times Q\times {\bf V}_h\times Q_h$ solve \eqref{cvstate} and \eqref{dstate} and $(\widetilde{\pee},\widetilde{s},\phb,\shbar)\in {\bf V}\times Q\times {\bf V}_h\times Q_h$ solve \eqref{cvadjoint} and \eqref{dadjoint}. Theorem \ref{v error equivalence}{\it (i)} gives
\begin{equation}
\label{4.3}
	\begin{aligned}
		\|E_{\rm C}\|+\enorm{E_{\rm S}}_{\rm pw}+\enorm{E_{\rm A}}_{\rm pw}\leq C_{\rm{c1}} (\enorm{\widetilde{E}_{\rm S}}_{\rm pw}+\enorm{\widetilde{E}_{\rm A}}_{\rm pw}).
	\end{aligned}
\end{equation}
The standard a priori results mentioned in Lemma \ref{lemma 4.1} shows
\begin{equation}
\label{4.4}
\enorm{\widetilde{E}_{\rm S}}_{\rm pw}+\enorm{\widetilde{E}_{\rm A}}_{\rm pw}\leq C_{\rm ae}h^s (\|\widetilde{\y}\|_{1+s}+\|\widetilde{r}\|_{s}+\|\widetilde{\pee}\|_{1+s}+\|\widetilde{s}\|_{s}+
        \|(1-\Pi_{0})({\bf f}+\uhb)\|+\|(1-\Pi_{0})(\yhb-{\bf y}_d)\|).
\end{equation}
Regularity results for standard Stokes problem and stability of projection operator $\Pi_{0}$ results in
\begin{align*}
	&\|\widetilde{\y}\|_{1+s}+\|\widetilde {r}\|_s\leq C_{\rm{reg}} \|{\bf f}+\bar{\U}_h\|, \|\widetilde{\pee}\|_{1+s}+\|\widetilde{s}\|_s\leq C_{\rm{reg}} \|\bar{\y}_h-\y_d\|,\\
    &\|(1-\Pi_{0})({\bf f}+\uhb)\|\leq(1+C_{\rm pi})\|{\bf f}+\uhb\|,\mbox{ and }\|(1-\Pi_{0})(\bar{{\bf y}}_h-{\bf y}_d)\|\leq(1+C_{\rm pi})\|\bar{{\bf y}}_h-{\bf y}_d\|.
\end{align*}
Substitution of above estimate in \eqref{4.4} and use of \eqref{4.3} with $C_{\rm{REG}}:=(1+C_{\rm pi})+C_{\rm reg}$ give
\begin{equation}
	\|E_{\rm C}\|+\enorm{E_{\rm S}}_{\rm pw}+\enorm{E_{\rm A}}_{\rm pw}\leq C_{\rm{c1}}C_{\rm{ae}} C_{\rm{REG}} h^s (\|\textbf{f}+\uhb\|+\|\yhb-{\bf y}_d\|).
\end{equation}
Using $\|\yhb\|\leq C_{\rm{dP}} \enorm{\yhb}_{\pw}\leq C_{\rm{dP}}\|{\bf f}+\uhb\|$, $\|\uhb\|\leq |\Om|^{1/2}\max\{|\U_a|,|\U_b|\}$, and triangle inequality finally leads to
\begin{equation*}
	\begin{aligned}
		\|E_{\rm C}\|+\enorm{E_{\rm S}}_{\rm pw}+\enorm{E_{\rm A}}_{\rm pw}\leq  C_{\rm{AE}} h^s (\|{\bf f}\|+|\Om|^{1/2}\max\{|\U_a|,|\U_b|\}+\|\y_d\|)
	\end{aligned}
\end{equation*}
with $C_{\rm{AE}}:=(1+C_{\rm{dp}})C_{\rm{c1}}C_{\rm{ae}} C_{\rm{REG}}$. This completes the proof of desired estimate.
\end{proof}

\subsubsection{A priori error estimates for optimal control problem (discretised approach).} A priori error estimates in case of discretised control approach are similar to that of variational approach, but for the sake of completeness it has been mentioned here. 
\begin{theorem}
\label{theorem 4.2}
	Let $(\bar{\bf y},\rb,\bar{\bf p},\sbar,\ub)$ be the solution of \eqref{cstate}-\eqref{cop} such that $(\bar{\bf y},\bar{r},\bar{\bf p},\bar{s})\in [({\bf V}\cap {\bf H}^{1+s}(\Om)\times Q\cap H^{s}(\Omega))]^2$ and $\ub\in {\bf U}_{\rm{ad}}$. Let $(\bar{\bf y}_h,\rhb,\bar{\bf p}_h,\shbar,\uhb)\in {\bf V}_h\times Q_h\times {\bf V}_h\times Q_h\times {\bf U}_{h,\rm{ad}}$ solve \eqref{dstate}-\eqref{dop}. Then the a priori error estimates mentioned below hold true
	\begin{equation*}
		\|E_{\rm C}\|+\enorm{E_{\rm S}}_{\rm pw}+\enorm{E_{\rm A}}_{\rm pw}\leq C_{\rm{DAE}} h^s (\|{\bf f}\|+|\Om|^{1/2}\max\{|\U_a|,|\U_b|\}+\|{\bf y}_d\|).
	\end{equation*}
\end{theorem}
\begin{proof}
	From Theorem \ref{d error equivalence}{\it (i)} gives
	\begin{equation}
		\label{4.7}
		\begin{aligned}
			\|E_{\rm C}\|+\enorm{E_{\rm S}}_{\rm pw}+\enorm{E_{\rm A}}_{\rm pw}\leq C_{\rm{\rm d1}}
		    (\|\widetilde{E}_{\rm C}\|+\enorm{\widetilde{E}_{\rm S}}_{\rm pw}+\enorm{\widetilde{E}_{\rm A}}_{\rm pw}).
		\end{aligned}
	\end{equation}
Here observe that it is sufficient for us to bound the term $\|\widetilde{E}_{\rm C}\|$. Definition of $\uhb$ and $\widetilde{\bf u}$ from \eqref{defuh} and \eqref{2.21} respectively give
\begin{equation}
	\label{4.8}
	\|\widetilde{E}_{\rm C}\|\leq \Bigl\lVert\Pi_{[\U_a,\U_b]}\left(-\frac{\bar{\pee}_h}{\alpha}\right)-\Pi_{[\U_a,\U_b]}\Pi_{0}\left(-\frac{\bar{\pee}_h}{\alpha}\right)\Bigr\rVert\leq \Bigl\lVert\left(-\frac{\bar{\pee}_h}{\alpha}\right)-\Pi_{0}\left(-\frac{\bar{\pee}_h}{\alpha}\right)\Bigr\rVert \leq h\alpha^{-1} \enorm{\bar{\pee}_h}_{\pw}.
\end{equation}
The above estimate follows from Lipschitz continuity of $\Pi_{[\U_a,\U_b]}$ and approximation property of $\Pi_0$ from \cite[Proposition 1.135]{ErnJLU_2004}.
Choice of $\textbf{v}_h=\bar{\pee}_h$, $q_h=\shbar$ in equation \eqref{dadjoint} along with ellipticity of bilinear form $a_h(\cdot,\cdot)$ leads to
\begin{equation}
	\label{4.9}
	\|\widetilde{E}_{\rm C}\|\leq h\alpha^{-1}\enorm{\phb}_{\pw}\leq h\alpha^{-1}\|\bar{\y}_h-\y_d\|\leq h\alpha^{-1}(C_{\rm dP}\|{\bf f}+\uhb\|+\|{\bf y}_d\|).
\end{equation}
 Therefore for $C_{\rm DAE}:=2\max\{C_{\rm AE},\alpha^{-1}C_{\rm dP}|\Om|^{(1-s)/2}, |\Om|^{(1-s)/2}\alpha^{-1}\}$, \eqref{4.7}, \eqref{4.9}, and Theorem \ref{Theorem-4.1} give the desired result.
\end{proof}

\subsection{A posteriori error estimates} 
\label{a pos}
The a posteriori error control for the optimal control problem \eqref{nfunc}-\eqref{nst} relies fundamentally on the error equivalence results presented in Theorem \ref{v error equivalence} and \ref{d error equivalence} along with the standard a posteriori estimates established for the Stokes problem \eqref{4.1}-\eqref{4.2}. These estimates for the Stokes problem are provided in Lemma \ref{lemma 4.2} below.

\begin{lemma}[Efficiency and reliability of estimator for Stokes problem]\cite[Section 3]{DEDR95}
\label{lemma 4.2}
For a given ${\bf g}\in{\bf L}^2(\Om)$, let $({\bf y},r)\in {\bf V}\times Q$ and $({\bf y}_h,r_h)\in {\bf V}_h\times Q_h$ satisfy \eqref{4.1} and \eqref{4.2}, respectively. Then, for $C_{\rm{rel}}, C_{\rm{eff}}>0$
\begin{equation*}
	C_{\rm{rel}}^{-2}\enorm{{\bf y}-{\bf y}_h}_{\pw}^2+\|r-r_h\|^2\leq \eta_{\rm{aux}}^2(\T)\leq C_{\rm{eff}}^2(\enorm{{\bf y}-{\bf y}_h}_{\pw}^2+\|r-r_h\|^2+\rm{osc}^2({\bf g},\T))\mbox{ with }
\end{equation*}
$\eta_{\rm{aux}}^2(\T):=\sum_{K\in\T}(h_{K}^2\|{\bf g}\|^2+\sum_{E\in\E_{K}}h_{K}\|[\n_h\bar{\bf y}_h\cdot t_E]\|^2)$ and $\rm{osc}^2({\bf g},\T):=\sum_{{\it K}\in\T} {\it h}_{\it K}^2\|(1-\Pi_{0}){\bf g}\|^2$.
\end{lemma}
\subsubsection{A posteriori error estimates for optimal control problem (variational approach)}
\begin{theorem}
\label{theorem 4.3}
	Let $(\bar{\bf y},\rb,\bar{\bf p},\sbar,\ub)\in {\bf V}\times Q\times {\bf V}\times Q\times {\bf U}_{\rm ad}$ be the solution of \eqref{cstate}-\eqref{cop}. Let $(\bar{\bf y}_h,\rhb,\bar{\bf p}_h,\shbar,\uhb)\in {\bf V}_h\times Q_h\times {\bf V}_h\times Q_h\times {\bf U}_{\rm ad}$ solve \eqref{dstate}-\eqref{dop}. Then there exists constants $C_{\rm{veff}},C_{\rm{vrel}}>0$ such that for $\eta^2(\T)=\eta_{\rm st}^2(\T)+\eta_{\rm adj}^2(\T)$ following estimates holds. 
	\begin{equation*}
		C_{\rm{vrel}}^{-2}(\|E_{\rm C}\|^2+\enorm{E_{\rm S}}_{\rm pw}^2+\enorm{E_{\rm A}}_{\rm pw}^2)\leq\eta^2(\T)\leq C_{\rm{veff}}^{2} (\|E_{\rm C}\|^2+\enorm{E_{\rm S}}_{\rm pw}^2+\enorm{E_{\rm A}}_{\rm pw}^2+ \rm{osc}^2(\T)).
	\end{equation*}
\end{theorem}
\begin{proof}
	Theorem \ref{v error equivalence}{\it (i)} gives
	\begin{equation}
		\label{4.10}
		\begin{aligned}
			\|E_{\rm C}\|^2+\enorm{E_{\rm S}}_{\rm pw}^2+\enorm{E_{\rm A}}_{\rm pw}^2\leq 4C_{\rm{\rm c1}}^2
			(\enorm{\widetilde{E}_{\rm S}}_{\rm pw}^2+\enorm{\widetilde{E}_{\rm A}}_{\rm pw}^2).
		\end{aligned}
	\end{equation}
Referring to Lemma \ref{lemma 4.2} leads to
\begin{equation*}
	\enorm{\widetilde{E}_{\rm S}}_{\rm pw}^2\leq C_{\rm rel}^2\eta_{\rm st}^2(\T)\mbox{ and }\enorm{\widetilde{E}_{\rm A}}_{\rm pw}^2\leq C_{\rm{rel}}^2 \eta_{\rm adj}^2(\T).
\end{equation*}
Substitution of this in \eqref{4.10} leads to desired reliability as
\begin{equation*}
	C_{\rm{vrel}}^{-2}(\|E_{\rm C}\|^2+\enorm{E_{\rm S}}_{\rm pw}^2+\enorm{E_{\rm A}}_{\rm pw}^2)\leq \\ 
	\eta^2(\T)\mbox{ with }C_{\rm vrel}^2:=4C_{\rm c1}^2C_{\rm rel}^2.
\end{equation*}
 For proving efficiency, use Lemma \ref{lemma 4.2} to get
 \begin{equation*}
     \eta_{\rm st}^2(\T)\leq C_{\rm eff}^2(\enorm{\widetilde{E}_{\rm st}}_{\rm pw}^2+{\rm osc_{\rm st}^2(\T)})\mbox{ and }\eta_{\rm adj}^2(\T)\leq C_{\rm eff}^2(\enorm{\widetilde{E}_{\rm adj}}_{\rm pw}^2+{\rm osc_{\rm adj}^2(\T)}).
 \end{equation*}
Addition of above inequalities yields
 \begin{equation*}
     \eta^2(\T)=\eta_{\rm st}^2(\T)+\eta_{\rm adj}^2(\T)\leq C_{\rm eff}^2(\enorm{\widetilde{E}_{\rm st}}_{\rm pw}^2+\enorm{\widetilde{E}_{\rm adj}}_{\rm pw}^2+{\rm osc_{\rm adj}^2(\T)}+{\rm osc_{\rm adj}^2(\T)}).
 \end{equation*}
 Finally, refer to Theorem \ref{v error equivalence}{\it (i)} to get the desired efficiency as
 \begin{align*}
	\eta^2(\T)&\leq C_{\rm{veff}}^{2} (\|E_{\rm C}\|^2+\enorm{E_{\rm S}}_{\rm pw}^2+\enorm{E_{\rm A}}_{\rm pw}^2 + \rm{osc}^2(\T))
\end{align*}
 with $C_{\rm veff}:=C_{\rm eff}^2\max\{1,5C_{\rm c3}^2\}.$
 \end{proof}


\subsubsection{A posteriori error estimates for optimal control problem (discretised approach)} 
\begin{theorem}
\label{theorem 4.4}
	Let $(\bar{\bf y},\rb,\bar{\bf p},\sbar,\ub)\in {\bf V}\times Q\times {\bf V}\times Q\times {\bf U}_{\rm ad}$ be the solution of \eqref{cstate}-\eqref{cop}. Let $(\bar{\bf y}_h,\rhb,\bar{\bf p}_h,\shbar,\uhb)\in {\bf V}_h\times Q_h\times {\bf V}_h\times Q_h\times {\bf U}_{h,{\rm ad}}$ solve \eqref{dstate}-\eqref{dop} and $\widetilde{\bf u}_h$ be as defined in \eqref{2.21}. Then for $\eta^2(\T)=\eta_{\rm st}^2(\T)+\eta_{\rm adj}^2(\T)+\eta_{\rm C}^2(\T)$ there exists constants $C_{\rm{deff}},C_{\rm{drel}}>0$ such that
	\begin{equation*}
		C_{\rm{drel}}^{-2}(\|E_{\rm C}\|^2+\enorm{E_{\rm S}}_{\rm pw}^2+\enorm{E_{\rm A}}_{\rm pw}^2)\leq\eta^2(\T)\leq C_{\rm{deff}}^{2} (\|E_{\rm C}\|^2+\enorm{E_{\rm S}}_{\rm pw}^2+\enorm{E_{\rm A}}_{\rm pw}^2 + \rm{osc}^2(\T)).
	\end{equation*}
\end{theorem}
\begin{proof}
	Theorem \ref{d error equivalence}{\it (i)} gives
	\begin{equation}
		\label{4.12}
	\begin{aligned}
		\|E_{\rm C}\|^2+\enorm{E_{\rm S}}_{\rm pw}^2+\enorm{E_{\rm A}}_{\rm pw}^2\leq 5 C_{\rm d1}^2(\|\widetilde{E}_{\rm C}\|^2+\enorm{\widetilde{E}_{\rm S}}_{\rm pw}^2+\enorm{\widetilde{E}_{\rm A}}_{\rm pw}^2).
	\end{aligned}
   \end{equation}
Use of Lemma \ref{lemma 4.2} yields
$
	\enorm{\widetilde{E}_{\rm S}}_{\rm pw}^2\leq C_{\rm rel}^2\eta_{\rm st}^2(\T)\mbox{ and }\enorm{\widetilde{E}_{\rm A}}_{\rm pw}^2\leq C_{\rm{rel}}^2 \eta_{\rm adj}^2(\T).
$
Using \eqref{4.8} and the fact $\bar{\bf u}_h\in {\bf \mathcal{P}_0}(\T)$, we obtain
\begin{equation}
	\|\widetilde{E}_{\rm C}\|^2\leq h^2\alpha^{-2}\enorm{\bar{\pee}_h}_{\pw}^2\leq \sum_{K\in\T}h_{K}^2\|\n_h(\bar{\bf u}_h-\alpha^{-1}\bar{\bf p}_h)\|_{K}^2=\eta_{\rm C}^2(\T).
\end{equation}
Substituting these into \eqref{4.12} yields the desired reliability as
\begin{equation*}
	C_{\rm{drel}}^{-2}(\|E_{\rm C}\|^2+\enorm{E_{\rm S}}_{\rm pw}^2+\enorm{E_{\rm A}}_{\rm pw}^2)\leq
	\eta^2(\T)\mbox{ with }C_{\rm drel}^2:=5C_{\rm d1}^2\max\{1,C_{\rm rel}^2\}.
\end{equation*}
The efficiency proof follows the same steps as in the previous theorem; it remains only to establish a bound for the additional control estimator $\eta_{C}^2$. Using the discrete stability of $\bar{\bf p}_h$ and the fact that $\bar{\bf u}_h\in{\bf \mathcal{P}}_0(\T)$, we obtain
\begin{equation}
	\label{4.15}
	\begin{aligned}
	\eta_C^2(\T)&=\sum_{K\in\T}h_K^2\|\n_h(\uhb-\alpha^{-1}\phb)\|_{K}^2
		\leq \alpha^{-2}\sum_{K\in\T} h_{K}^2\|\bar{\y}_h-\y_d\|_{K}^2=\alpha^{-2}\sum_{K\in\T}\mu_{A,K}^2\leq \alpha^{-2}\eta_{\rm adj}^2(\T).
	\end{aligned}
\end{equation} 
Then, the efficiency of $\eta_{\rm adj}(\T)$  concludes the proof with $C_{\rm deff}^2=5 C_{\rm d2}^2C_{\rm eff}^2(1+\alpha^{-2})$.
\end{proof}

\section{Verification of axioms of adaptivity (Variational approach)}
\label{verification of variational axioms}
\noindent This section aims to verify the adaptivity axioms mentioned in Section \ref{adaptive convergence}. Recall from \eqref{estidef} that, for the variational formulation, the total error estimator is given by 
$\eta^2(\mathcal T)=\eta_{\mathrm{st}}^2(\mathcal T)+\eta_{\mathrm{adj}}^2(\mathcal T)$. 
%
  Recall that $({\bar{\bf y}}_h,{\bar{\bf p}}_h)$ and $({\widehat{\bar{\bf y}}}_h,{\widehat{\bar{\bf p}}}_h)$ are the solutions of \eqref{dstate}-\eqref{dop} on $\T$ and $\widehat{\T}$, respectively, and ${\bf d}^2(\T,\widehat{\T})$ is defined as
\begin{equation*}
{\bf d}^2(\T,\widehat{\T}):=\enorm{\widehat{\bar{\bf y}}_h-\bar{\bf y}_h}_{\rm pw}^2+\enorm{\widehat{\bar{\bf p}}_h-\bar{\bf p}_h}_{\rm pw}^2. 
\end{equation*}
\begin{lemma}(Discrete jump control)\cite[Lemma 5.2]{CCRH17}
	\label{discrete jump control}
	For $k\in\mathbb{N}$ there exists $C_{\rm{djc}}>0$ depending on the shape regularity of $\T$ such that, for all $g\in \mathcal{P}_{k}(\T)$, following estimate is true
	\begin{equation}
		\sum_{K\in\T}|K|^{1/2}\sum_{E\in\E_{K}} \|[g]\|_{E}^2 \leq C_{\rm{djc}}^2\|g\|^2.
	\end{equation}
\end{lemma}

\subsection{Proof of (A1)}
Recall that the complete estimator over triangulation $\T$ and $\widehat{\T}$ is denoted as $\eta$ and $\widehat{\eta}$, respectively. Recall from Section \ref{adaptive convergence}, \textbf{(A1)} states
\begin{equation*}
    |\widehat{\eta}(\T\cap\widehat{\T})-\eta(\T\cap\widehat{\T})|\leq \Lambda_1 {\bf d}(\T,\widehat{\T}).
\end{equation*}
\begin{proof}
	 Reverse triangle inequality shows
	\begin{equation}
		\label{5.2}
		\begin{aligned}
			|\widehat{\eta}(\T\cap\widehat{\T})-\eta(\widehat{\T}\cap\T)|^2\leq &\sum_{K\in\T\cap\widehat{\T}}\big((\widehat{\mu}_{S,K}-\mu_{S,K})^2+(\widehat{\mu}_{A,K}-\mu_{A,K})^2+(\widehat{\rho}_{S,\E(K)}-\rho_{S,\E(K)})^2\\
			&+(\widehat{\rho}_{A,\E(K)}-\rho_{A,\E(K)})^2\big).
		\end{aligned}
	\end{equation}
Reverse triangle inequality, definition of estimator from Section \ref{adaptive convergence}, and Lemma \ref{discrete sobolev}{\it (ii)} give
\begin{align*}
	(\widehat{\mu}_{S,K}-\mu_{S,K})^2&\leq h_{K}^2 \|\widehat{\bar{\U}}_h-\bar{\U}_h\|_{K}^2\mbox{ and }(\widehat{\mu}_{A,K}-\mu_{A,K})^2\leq h_{K}^2 \|\widehat{\bar{\y}}_h-\bar{\y}_h\|_{K}^2\leq h_{K}^2 C_{\rm{PI}}^2 \enorm{\widehat{\bar{\y}}_h-\bar{\y}_h}_{\rm{pw}}^2.
\end{align*}
Taking summation over $K\in \T\cap\widehat{\T}$ and using the Lipschitz continuity of the projection operator $\Pi_{[\U_a,\U_b]}$ with the Lipschitz constant $\alpha^{-1}$ yield
\begin{equation}
	\label{5.3}
	\begin{aligned}
		\sum_{K\in\T\cap\widehat{\T}}(\widehat{\mu}_{S,K}-\mu_{S,K})^2+(\widehat{\mu}_{A,K}-\mu_{A,K})^2 \leq C_{\rm{PI}}^2 h^2 C_{\rm m\pi}^2 (\enorm{\widehat{\bar{\pee}}_h-\bar{\pee}_h}_{\rm{pw}}^2+\enorm{\widehat{\bar{\y}}_h-\bar{\y}_h}_{\rm{pw}}^2)
	\end{aligned}
\end{equation}
with $C_{\rm m\pi}^2:=\max\{1,\alpha^{-2}\}$. Reverse triangle inequality and Lemma \ref{discrete jump control} leads to
\begin{equation}
\label{edge1}
	\begin{aligned}
		\sum_{K\in\T\cap\widehat{\T}}(\widehat{\rho}_{S,\E(K)}-\rho_{S,\E(K)})^2&\leq \sum_{K\in\T\cap\widehat{\T}} h_{K} \sum_{E\in\E_{K}} \|[\n_h(\widehat{\bar{\y}}_h-\bar{\y}_h) \tau_E]\|_{E}^2\leq C_{\rm{djc}}^2\enorm{\widehat{\bar{\y}}_h-\bar{\y}_h}_{\pw}^2
	\end{aligned}
\end{equation}
and
\begin{equation}
\label{edge2}
	\begin{aligned}
		\sum_{K\in\T\cap\widehat{\T}}(\widehat{\rho}_{A,\E(K)}-\rho_{A,\E(K)})^2&\leq \sum_{K\in\T\cap\widehat{\T}} h_{K} \sum_{E\in\E_{K}} \|[\n_h(\widehat{\bar{\pee}}_h-\bar{\pee}_h) \tau_E]\|_{E}^2\leq C_{\rm{djc}}^2\enorm{\widehat{\bar{\pee}}_h-\bar{\pee}_h}_{\pw}^2.
	\end{aligned}
\end{equation}

Substituting \eqref{5.3}, \eqref{edge1}, and \eqref{edge2} in \eqref{5.2} results in ${\bf (A1)}$ with $\Lambda_1^2:= |\Omega|C_{\rm{PI}}^2C_{m\pi}^2+C_{\rm{djc}}^2$.
\end{proof}

\subsection{Proof of (A2)}
Recall \textbf{(A2)} from Section \ref{adaptive convergence}
\begin{equation*}
		\widehat{\eta}(\widehat{\T} \setminus \T)\leq \rho \eta(\T \setminus \widehat{\T})+\Lambda_2{\bf d} (\T,\widehat{\T})\mbox{ with }\rho\in(0,1).
	\end{equation*} 
\begin{proof}
For $K\in\T$ define $\widehat{T}(K):=\{T\in \widehat{\T} : T\subset K\}$. Application of reverse triangle inequality gives
\begin{equation*}
	\begin{aligned}
		\widehat{\eta}(\widehat{\T} \setminus \T)\leq& \biggl(\sum_{K\in\T\setminus\widehat{\T}}\sum_{T\in\widehat{T}(K)}\big((\widehat{\mu}_{S,T}-\mu_{S,T})^2+(\widehat{\mu}_{A,T}-\mu_{A,T})^2+(\widehat{\rho}_{S,\E(T)}-\rho_{S,\E(T)})^2\\
		&+(\widehat{\rho}_{A,\E(T)}-\rho_{A,\E(T)})^2\big)\biggr)^{1/2}+\biggl(\sum_{K\in\T\setminus\widehat{\T}}\sum_{T\in\widehat{T}(K)}\big({\mu}_{S,T}^2+{\rho}_{S,\E(T)}^2+{\mu}_{A,T}^2+{\rho}_{A,\E(T)}^2\big)\biggr)^{1/2}=: S_1+S_2.\\
	\end{aligned}
\end{equation*}
As a consequence of \eqref{5.3}, \eqref{edge1}, and \eqref{edge2} over $\widehat{\T}\setminus\T$ one can observe that $S_1^2\leq \Lambda_1^2(\enorm{\widehat{\bar{\pee}}_h-\bar{\pee}_h}_{\rm{pw}}^2+\enorm{\widehat{\bar{\y}}_h-\bar{\y}_h}_{\rm{pw}}^2)$.
Since $T\in\widehat{T}(K)$, utilize $h_T= |T|^{1/2}\leq 2^{-1/2} |K|^{1/2}=2^{-1/2}h_{K}$ to get
\begin{equation}
\label{new5.6}
\begin{aligned}
	S_2^2&\leq \sum_{K\in\T\setminus\widehat{\T}}  2^{-1}({\mu}_{S,K}^2+\mu^2_{A,K})+\sum_{K\in\T\setminus\widehat{\T}}2^{-1/2}({\rho}_{S,\E(K)}^2+{\rho}_{A,\E(K)}^2)\\
    &\leq 2^{-1/2}\sum_{K\in\T\setminus\widehat{\T}}({\mu}_{S,K}^2+{\rho}_{S,\E(K)}^2+{\mu}_{A,K}^2+{\rho}_{A,\E(K)}^2).
    \end{aligned}
\end{equation}
Above estimate implies $S_2^2\leq 2^{-1/2}\eta^2(\T\setminus\widehat{\T})$.
Combination of bounds on $S_1$ and $S_2$ concludes the proof of \textbf{(A2)} with $\rho:= 2^{-1/4}$ and $\Lambda_2^2=\Lambda_1^2$.
\end{proof}
Observe that in \eqref{edge1}-\eqref{edge2} the bounds on the edge estimator terms are without $h$, whereas the bounds on volume estimators in equation \eqref{5.3} contains $h^2$. Using this, one can have the following Corollary. 
\begin{corollary}
	\label{corollary 5.1}
	Let the volume estimator with respect to $\widehat{\T}$ be $\widehat{\mu}^2 := \sum_{K\in\widehat{\T}}(\widehat{\mu}_{S,K}^2+\widehat{\mu}_{A,K}^2).$ Then for $\Lambda_0:= C_{m\pi}C_{\rm{PI}}$
	\begin{align}
		\label{5.11}
		|\widehat{\mu}(\T\cap\widehat{\T})-\mu(\T\cap\widehat{\T})|&\leq h\Lambda_0 {\bf d}(\T,\widehat{\T})\hspace{0.2cm}\mbox{and}\hspace{0.2cm}\widehat{\mu}(\widehat{\T}\backslash \T)\leq 2^{-1/2}\mu(\T\backslash\widehat{\T})+h\Lambda_0{\bf d}(\T,\widehat{\T}).
	\end{align}
\end{corollary}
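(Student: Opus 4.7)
The plan is to mirror the arguments used for Axioms (A1) and (A2), but to restrict attention to the purely volumetric contributions, where the defining prefactor $h_K^2$ (rather than $h_K$) produces the extra factor of $h$ on the right-hand side.

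For the first estimate I would apply the reverse triangle inequality to the square-root expression defining $\hat\mu(\T\cap\hat\T)-\mu(\T\cap\hat\T)$, reducing the problem to controlling
\[
\sum_{K\in\T\cap\hat\T}(\hat\eta_{S,K}-\eta_{S,K})^2+(\hat\eta_{A,K}-\eta_{A,K})^2.
\]
Exactly as in the derivation of equation \eqref{5.3}, a second use of the reverse triangle inequality together with the definition of $\eta_{S,K}$ and $\eta_{A,K}$ transfers the difference of norms onto the pointwise differences $\hat{\bar\U}_h-\bar\U_h$ and $\hat{\bar\y}_h-\bar\y_h$, and the estimate \eqref{5.4} (invoking the Lipschitz property of $\Pi_{[\U_a,\U_b]}$ with constant $\alpha^{-1}$ and the discrete Sobolev embedding Lemma \ref{discrete sobolev}(2)) yields the bound $h^2 C_{\rm PI}^2 C_{m\pi}^2\bigl(\enorm{\hat{\bar\y}_h-\bar\y_h}_{\pw}^2+\enorm{\hat{\bar\pee}_h-\bar\pee_h}_{\pw}^2\bigr) = h^2\Lambda_0^2\,\delta^2(\T,\hat\T)$. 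Taking square roots gives the first inequality.

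For the second estimate I would split $\hat\mu(\hat\T\setminus\T)$ following the same template as in \eqref{5.8}, writing
\[
\hat\mu(\hat\T\setminus\T)\le\Bigl(\sum_{K\in\T\setminus\hat\T}\sum_{T\in\hat T(K)}\!\!(\hat\eta_{S,T}-\eta_{S,T})^2+(\hat\eta_{A,T}-\eta_{A,T})^2\Bigr)^{1/2}+\Bigl(\sum_{K\in\T\setminus\hat\T}\sum_{T\in\hat T(K)}\!\!\eta_{S,T}^2+\eta_{A,T}^2\Bigr)^{1/2}=:S_1+S_2,
\]
where here $\eta_{S,T}$ and $\eta_{A,T}$ denote the volume pieces computed with the data on the fine element $T$. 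The term $S_1$ is handled by repeating the argument above on the refined elements, producing the bound $h\Lambda_0\,\delta(\T,\hat\T)$. For $S_2$ I would exploit newest-vertex bisection via $h_T\approx|T|^{1/2}\le 2^{-1/2}|K|^{1/2}\approx 2^{-1/2}h_K$ for every $T\in\hat T(K)$ with $K\in\T\setminus\hat\T$, together with the partition property $\sum_{T\in\hat T(K)}\|\cdot\|_{L^2(T)}^2=\|\cdot\|_{L^2(K)}^2$, to absorb the inner sum into the coarse volume indicators and obtain $S_2\le 2^{-1/2}\mu(\T\setminus\hat\T)$. Summing the two bounds yields the second inequality.

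The only place where real care is needed is in tracking the factor $h$: unlike the edge indicator, whose scaling is $h_K$ and produced a bound independent of $h$ in \eqref{5.7}, the volume indicator has scaling $h_K^2$, so the Lipschitz/embedding step necessarily generates the prefactor $h$ that appears in \eqref{5.11}. This is not really an obstacle but a bookkeeping point, and once the reduction $h_T\le 2^{-1/2}h_K$ on bisected children is in hand, the whole argument is a straightforward specialization of the proofs of (A1) and (A2) to the volumetric terms.
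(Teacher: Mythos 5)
Your proposal is correct and follows exactly the route the paper intends: the first bound is the volume-only part of the (A1) argument, i.e.\ the reverse triangle inequality plus the Lipschitz continuity of $\Pi_{[\U_a,\U_b]}$ and Lemma \ref{discrete sobolev}(2) as in \eqref{5.3}--\eqref{5.4}, which carries the $h$ prefactor because the volume indicators scale with $h_K^2$; the second bound is the (A2)-type splitting \eqref{5.8} restricted to the volume terms, with $h_T^2\le\tfrac12 h_K^2$ under bisection giving the reduction factor $2^{-1/2}$ and $\Lambda_0=C_{m\pi}C_{\rm PI}$. No gaps.
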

\begin{proof}
    The proof follows from \eqref{5.3} and \eqref{new5.6}.
\end{proof}
\subsection{Proof of (A3)}
\label{a3 proof v}
Recall \textbf{(A3)} from Section~\ref{adaptive convergence}, which states that
${\bf d}^2(\T,\widehat{\T}) \leq \Lambda_3 \eta^2(\mathcal{R}(\T,\widehat{\T}))$, 
where $\mathcal{R}(\T,\widehat{\T}) := \{ K \in \T : \text{there exists } T \in \T \setminus \widehat{\T} \text{ with } \mathrm{dist}(K,T)=0 \}$.
The proof of ${\bf (A3)}$ uses the Lemma \ref{interpolation} and certain properties of companion operator as mentioned below in Lemma \ref{companion}.
\begin{lemma}(Companion/Enrichment operator)
	\label{companion}
	For any ${\bf v}_h\in {\bf V}_h$, the companion or enrichment operator $J : {\bf V}_h\to{\bf V}$ satisfy the following properties. 
	\begin{enumerate}[label=(\roman*)]
		\item For all ${\bf v}_h\in {\bf V}_h$
        \begin{align*}
            h_{K}^{-2}\|(1-J){\bf v}_h\|_{K}^{2}+\|\n_h(1-J){\bf v}_h\|_{K}^{2}\leq C_{\rm{J}}\sum_{E\in\omega(K)}h_{K}\|[\n_h{\bf v}_h]_E t_E\|_{E}^2\leq \Lambda_{\rm{J}}\min_{{\bf v}\in{\bf V}}\|\n_h({\bf v}_h-{\bf v})\|_{\omega(K)}^2.
        \end{align*}
		\item Any ${\bf v}_h\in{\bf V}_h$ and ${\widehat{\bf v}}_h^*:= \widehat{I}_h J {\bf v}_h\in {\widehat{\bf V}}_h$ satisfy
		\begin{equation*}
			\|\n_h({\widehat{\bf v}}_h^*-{\bf v}_h)\|^2\leq C_{\rm{IJ}}^2 \sum_{K\in\mathcal{R(\T,\widehat{\T})}} h_{K}\sum_{E\in\E_{K}}\|[\n_h {\bf v}_h]_Et_E\|_{E}^2.
		\end{equation*}
		\item $\Pi_{0}(1-J){\bf v}_h=0$ and $\Pi_{0} \nabla_h(1-J){\bf v}_h=0$ for all ${\bf v}_h\in {\bf V}_h$.
	\end{enumerate}
\end{lemma}
\noindent For the details of the proof refer \cite[Theorem 5.1]{CMRC12}, \cite[Theorem 3.2]{CCSP20}, and \cite[Proposition 2.3]{CGS2015}.

\begin{proof}[Proof of {\bf (A3)}]
	Use equation \eqref{3.11}, \eqref{3.12}, and \eqref{c15} to get the bound for the discrete error as 
	\begin{equation}
		\label{5.8}
		\2+\3\leq C_{\Pi} (\6+\7)
	\end{equation}
with $C_{\Pi}:=\max\{C_9+C_9C_{14}+C_{\rm dP}, C_1+C_9+C_{\rm dP}\}$. 
Therefore it is sufficient to find the bound on the terms $\6$ and $\7$. Introducing $\widehat{\bar{\y}}_h^{*}=\widehat{I}_hJ {\bar{\bf y}}_h\in\widehat{\bf V}_h$ and $I_h\widehat{\widetilde{\bf y}}_h$ give
\begin{align*}
	\6^2&=a_h(\widehat{\widetilde{\y}}_h,\widehat{\widetilde{\y}}_h-\widehat{\bar{\y}}_h^{*})+a_h(\widehat{\widetilde{\y}}_h,\widehat{\bar{\y}}_h^{*}-\bar{\y}_h)-a_h(\bar{\y}_h,\widehat{\widetilde{\y}}_h-I_h\widehat{\widetilde{\y}}_h)-a_h(\bar{\y}_h,I_h\widehat{\widetilde{\y}}_h-\bar{\y}_h).
\end{align*}
Application of Lemma \ref{interpolation}{\it (iii)} gives $a_h(\bar{\y}_h,\widehat{\widetilde{\y}}_h-I_h\widehat{\widetilde{\y}}_h)=0$. Therefore introducing $\bar{\bf y}_h$ and re-arrangement of terms give
\begin{equation*}
	\6^2=a_h(\widehat{\widetilde{\y}}_h-\bar{\y}_h,\widehat{\bar{\y}}_h^{*}-\bar{\y}_h)+a_h(\bar{\y}_h,\widehat{\bar{\y}}_h^{*}-\bar{\y}_h)+a_h(\widehat{\widetilde{\y}}_h,\widehat{\widetilde{\y}}_h-\widehat{\bar{\y}}_h^{*})-a_h(\bar{\y}_h,I_h\widehat{\widetilde{\y}}_h-\bar{\y}_h).
\end{equation*}
Again applying Lemma \ref{interpolation}{\it (iii)} for $\bar{\bf y}_h=I_{h}\widehat{\bar{\bf y}}_h^{*}$ \cite[(C4)]{CCSP20}, we  observe $a_h(\bar{\y}_h,\widehat{\bar{\y}}_h^{*}-\bar{\y}_h)=0$. Then
\begin{equation}
	\label{5.9}
	\6^2=a_h(\widehat{\widetilde{\y}}_h-\bar{\y}_h,\widehat{\bar{\y}}_h^{*}-\bar{\y}_h)+a_h(\widehat{\widetilde{\y}}_h,\widehat{\widetilde{\y}}_h-\widehat{\bar{\y}}_h^{*})-a_h(\bar{\y}_h,I_h\widehat{\widetilde{\y}}_h-\bar{\y}_h).
\end{equation}
By utilizing  \eqref{dvstate} for $\widehat{\bf v}_h=\widehat{\widetilde{\y}}_h-\widehat{\bar{\y}}_h^{*}$ and \eqref{dstate} for ${\bf v}_h=I_h\widehat{\widetilde{\y}}_h-\bar{\y}_h$ we obtain
\begin{align}
\label{new5.10}
	a_h(\widehat{\widetilde{\y}}_h,\widehat{\widetilde{\y}}_h-\widehat{\bar{\y}}_h^{*})&=({\bf f}+\bar{\U}_h,\widehat{\widetilde{\y}}_h-\widehat{\bar{\y}}_h^{*})+b_h(\widehat{\widetilde{\y}}_h-\widehat{\bar{\y}}_h^{*},\widehat{\widetilde{r}}_h)\mbox{ and }\\
    \label{new5.11}
	a_h(\bar{\y}_h,I_h\widehat{\widetilde{\y}}_h-\bar{\y}_h)&=({\bf f}+\bar{\U}_h,I_h\widehat{\widetilde{\y}}_h-\bar{\y}_h)+b_h(I_h\widehat{\widetilde{\y}}_h-\bar{\y}_h,\bar{r}_h).
\end{align}
Use the definition of bilinear form $b_h(\cdot,\cdot)$, Lemma \ref{companion}{\it (ii)} to get $\widehat{\bar{\bf y}}_h^{*}=\widehat{I}_h J\bar{\bf y}_h$, and $\widehat{q}_h=\widehat{\widetilde{r}}_h$ in \eqref{dvstate}  
 to obtain
\begin{equation*}
    b_h(\widehat{\widetilde{\y}}_h-\widehat{\bar{\y}}_h^{*},\widehat{\widetilde{r}}_h)=\sum_{K\in\widehat{\T}}\int_{K}\widehat{\widetilde{r}}_h\,{\rm div}(\widehat{\widetilde{\y}}_h-\widehat{\bar{\y}}_h^{*})\,dx=\sum_{K\in\widehat{\T}}\int_{K}\widehat{\widetilde{r}}_h\,{\rm div}(\widehat{\widetilde{\y}}_h-\widehat{I}_h J{\bar{\y}}_h)\,dx=-\sum_{K\in\widehat{\T}}\int_{K}\widehat{\widetilde{r}}_h\,{\rm div}\widehat{I}_h J{\bar{\y}}_h\,dx.
\end{equation*}
Using above estimate, Lemma \ref{interpolation}{\it (i)}, and Lemma \ref{companion}{\it (iii)} give
\begin{equation*}
    b_h(\widehat{\widetilde{\y}}_h-\widehat{\bar{\y}}_h^{*},\widehat{\widetilde{r}}_h)=-\sum_{K\in\widehat{\T}}\int_{K}\widehat{\widetilde{r}}_h\,{\rm div}\widehat{I}_h J{\bar{\y}}_h\,dx=-\sum_{K\in\widehat{\T}}\int_{K}\widehat{\widetilde{r}}_h\,\widehat{\Pi}_0{\rm div} J{\bar{\y}}_h\,dx=-\sum_{K\in\widehat{\T}}\int_{K}\widehat{\widetilde{r}}_h\,\widehat{\Pi}_0{\rm div} {\bar{\y}}_h\,dx.
\end{equation*}
Since $\bar{\bf y}_h\in{\bf V}_h,$ we can observe ${\rm div}\bar{\bf y}_h|_K\in \mathcal{P}_0(K)$.Therefore, $\widehat{\Pi}_0{\rm div}\bar{\bf y}_h|_K=\Pi_0{\rm div}\bar{\bf y}_h|_K={\rm div}\bar{\bf y}_h|_K$. Using this and choosing $q_h=\Pi_0\widehat{\widetilde{r}}_h$ in \eqref{dstate} we can observe
\begin{equation*}
    b_h(\widehat{\widetilde{\y}}_h-\widehat{\bar{\y}}_h^{*},\widehat{\widetilde{r}}_h)=-\sum_{K\in\widehat{\T}}\int_{K}\widehat{\widetilde{r}}_h\,\widehat{\Pi}_0{\rm div} {\bar{\y}}_h\,dx=-\sum_{K\in\widehat{\T}}\int_{K}\widehat{\widetilde{r}}_h\,{\rm div} {\bar{\y}}_h\,dx=-\sum_{K\in\widehat{\T}}\int_{K}\Pi_0\widehat{\widetilde{r}}_h\,{\rm div} {\bar{\y}}_h\,dx=0.
\end{equation*}
The analogous calculations by choosing $q_h=\bar{r}_h$ in \eqref{dstate} , using Lemma \ref{interpolation}{\it (i)}, and by choosing $\widehat{q}_h=\bar{r}_h$ in \eqref{dvstate} yield 
\begin{equation*}
b_h(I_h\widehat{\widetilde{\y}}_h-\bar{\y}_h,\bar{r}_h)=\sum_{K\in\T}\int_{K}\bar{r}_h{\rm div}(I_h\widehat{\widetilde{\y}}_h-\bar{\y}_h)\,dx=\sum_{K\in\T}\int_{K}\bar{r}_h \Pi_0{\rm div} \widehat{\widetilde{\y}}_h\,dx=\sum_{K\in\T}\int_{K}\bar{r}_h {\rm div} \widehat{\widetilde{\y}}_h\,dx=0.
\end{equation*}
Therefore, last two estimates suggest that $b_h(\widehat{\widetilde{\y}}_h-\widehat{\bar{\y}}_h^{*},\widehat{\widetilde{r}}_h)=b_h(I_h\widehat{\widetilde{\y}}_h-\bar{\y}_h,\bar{r}_h)=0$. Substituting this in \eqref{new5.10}-\eqref{new5.11} and using the fact that $\bar{\bf y}_h=I_h\widehat{\bar{\bf y}}_h^*$ along with some elementary calculation leads to
\begin{equation*}
	a_h(\widehat{\widetilde{\y}}_h,\widehat{\widetilde{\y}}_h-\widehat{\bar{\y}}_h^{*})-a_h(\bar{\y}_h,I_h\widehat{\widetilde{\y}}_h-\bar{\y}_h)=({\bf f}+\bar{\U}_h,(1-I_h)(\widehat{\widetilde{\y}}_h-\widehat{\bar{\y}}_h^{*})).
\end{equation*}
Substituting this in \eqref{5.9} gives
\begin{equation}
	\6^2=a_h(\widehat{\widetilde{\y}}_h-\bar{\y}_h,\widehat{\bar{\y}}_h^{*}-\bar{\y}_h)+({\bf f}+\bar{\U}_h,(1-I_h)(\widehat{\widetilde{\y}}_h-\widehat{\bar{\y}}_h^{*})).
\end{equation}
As a consequence of Lemma \ref{interpolation}{\it (iv)} we observe $\|(1-I_h)(\widehat{\widetilde{\y}}_h-\widehat{\bar{\y}}_h^{*})\|_{\T\cap\widehat{\T}}=0$. Using this, Cauchy-Schwartz inequality, and Lemma \ref{interpolation}{\it (ii)} result in
\begin{align*}
	\6^2\leq \enorm{\widehat{\widetilde{\y}}_h-\bar{\y}_h}_{\pw}&\enorm{\widehat{\bar{\y}}_h^{*}-\bar{\y}_h}_{\pw}+C_{\rm{I}}\sum_{K\in \T\setminus\widehat{\T}}h_K\|\textbf{f}+\bar{\U}_h\|_{K}\|\n_h(\widehat{\widetilde{\y}}_h-\widehat{\bar{\y}}_h^{*})\|_{K}.
\end{align*}
A use of H\"{o}lder's inequality and Lemma \ref{companion}{\it (ii)} yield
\begin{equation}
	C_{\rm{SI}}^{-1}\6^2\leq (\enorm{\widehat{\widetilde{\y}}_h-\bar{\y}_h}_{\pw}+\enorm{\widehat{\widetilde{\y}}_h-\widehat{\bar{\y}}_h^{*}}_{\pw})\eta_{\rm{st}}(\mathcal{R}(\T,\widehat{\T}))\mbox{ with }C_{\rm{SI}}:=\max\{C_{\rm{IJ}},C_{\rm{I}}\}.
\end{equation}
Triangle inequality and Lemma \ref{companion}{\it (ii)} gives
\begin{align*}
	C_{\rm{S2}}^{-1}\6^2\leq (\enorm{\widehat{\widetilde{\y}}_h-\bar{\y}_h}_{\pw}\eta_{\rm{st}}(\mathcal{R}(\T,\widehat{\T}))+\eta_{\rm{st}}^2(\mathcal{R}(\T,\widehat{\T})))\mbox{ with }C_{\rm{S2}}:= C_{\rm{SI}}\max\{2,C_{\rm{IJ}}\}.
\end{align*}
Young's inequality with $\epsilon=C_{\rm S2}$ shows that $\6^2\leq C_{\rm{S}}\eta_{\rm st}^2(\mathcal{R}(\T,\widehat{\T}))$ with $C_{\rm{S}}:=C_{\rm{S2}}(2+C_{\rm{S2}})$. The analogous calculations leads to the estimate for $\7^2\leq C_{\rm{A}}\eta_{\rm adj}^2(\mathcal{R}(\T,\widehat{\T}))$ with $C_{\rm A}:=(C_{\rm IJ}+C_{\rm I})^2+2C_{\rm I}C_{\rm IJ}$. Therefore \eqref{5.8} concludes the proof of \textbf{(A3)} with $\Lambda_3^2=2C_{\Pi}^2\max\{C_{\rm{S}},C_{\rm A}\}$.
\end{proof}

\subsection{Proof of (A4)}
\label{proof of a4v}
The proof of $\textbf{(A4)}$ follows from $(\textbf{A4}_{\varepsilon})$ as mentioned in \cite[Theorem 3.1]{CCRH17}. This section aims to prove $(\textbf{A4}_{\varepsilon})$. Assume $0<\varepsilon<8C_{\rm rl}^2$ and choose maximum $\delta$ such that following estimate holds
\begin{equation}
	\label{constants}
	\max\{\Delta_1,\Delta_2 ,\Delta_3 ,\Delta_4 \}\leq \varepsilon,
\end{equation}
with $\Delta_i$ for $i=1,2,3,4$ as defined in Table \ref{tab:constants-horizontal}. Moreover, the other constants mentioned in Table \ref{tab:constants-horizontal} will be used in the proof of Theorem \ref{A4ep} mentioned below. 
\begin{table}[hbt!]
\footnotesize
\centering
\renewcommand{\arraystretch}{1.4}
\begin{tabular}{|c|c|c|c|c|}
\hline
\textbf{Constant} & \textbf{Definition} & \textbf{Constant} & \textbf{Definition} \\
\hline
$C_{\rm A41}$ & $\max\{2C_{\rm I},\alpha^{-1}\}$ & $C_{\rm A42}$ & $\max\{C_1C_7, (1+C_1)\}$ \\
\hline
$C_{\rm A43}$ & $4C_{\rm A42}C_{\rm ael}\max\{1, C_{\rm rst}, C_{\rm sadj}\}$ & $C_{\rm A44}$ & $2(1+\alpha^{-1}C_{\rm PJ})\max\{1, C_{\rm cst}, C_{\rm cad}C_{\rm PJ}\}$ \\
\hline
$C_{\rm A45}$ & $C_{\rm A43}\max\{1, C_{\rm A44}\}$ & $C_{\rm sa1}$ & $C_{\rm A41}\max\{1,C_{\rm A45}\alpha^{-1}\}$ \\
\hline
$C_{\rm sa}$ & $4C_{\rm sa1}$ & $C_{\rm rl}$ & $C_{\rm vrel}$ \\
\hline
$\Lambda_5$ & $C_{\rm rl}C_{\rm sa}$ & $\Delta_1$ & $2^5C_{\rm sa}^2\delta^{2s}C_{\rm rl}^2$ \\
\hline
$\Delta_2$ & $2^5C_{\rm sa}^2\Lambda_{\rm mon}^2\delta^{2s}$ & $\Delta_3$ & $3\times 2^6\Lambda_5^2\Lambda_0^2\delta^2$ \\
\hline
$\Delta_4$ & $2^{13/3}\Lambda_5^{4/3}\Lambda_0^{2/3}\delta^{2/3}(1+\Lambda_{\rm mon}^2)^{1/3}$ & & \\
\hline
\end{tabular}
\caption{Definitions of various constants used in proof of $(\textbf{A4}_{\varepsilon})$.}
\label{tab:constants-horizontal}
\end{table}

\begin{theorem} [${\rm\bf A4}_{\varepsilon}$]
	\label{A4ep}
	Let $(\bar{\bf y}_l, \bar{\bf p}_l, \bar{\U}_l)$ denote the solution of the optimality system \eqref{dstate}-\eqref{dadjoint} associated with the $l$-th refinement of $\T_0$, and define ${\bf d}_{l,l+1} := {\bf d}(\T_l,\T_{l+1})$. Then, for any $\varepsilon > 0$, there exist $\delta > 0$ and $0 < \Lambda_4(\varepsilon) < \infty$ such that, for all $l,m \in \mathbb{N} \cup \{0\}$,
	\begin{equation}
		\tag{$\textbf{A4}_{\varepsilon}$} 
		\sum_{k=l}^{l+m}{\bf d}_{k,k+1}^2\leq \Lambda_{4(\varepsilon)}\eta_{l}^2+\varepsilon\sum_{k=l}^{l+m}\eta_{k}^2.
	\end{equation}
\end{theorem}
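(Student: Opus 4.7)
The plan is to establish the perturbed quasi-orthogonality $\textbf{A4}_{\varepsilon}$ by adapting the strategy of Carstensen--Rabus \cite[Theorem 3.1]{CCRH17} for nonconforming Stokes to the coupled state-adjoint-control system, combining the error-equivalence results of Theorem \ref{Error equivalences}(c)--(d) with the companion/interpolation machinery already used in the proof of \textbf{A3}. A clean Pythagoras identity is unavailable here for two reasons: Crouzeix--Raviart functions are not conforming, so Galerkin orthogonality fails across levels; and the discrete controls $\bar{\U}_{k}$ and $\bar{\U}_{k+1}$ differ, so even for a single variable the driving right-hand side changes between consecutive meshes. The perturbed axiom is designed exactly to absorb these two failures into $\varepsilon\sum_{k}\eta_{k}^{2}$.

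First I would split $\delta_{k,k+1}^{2}$ into its state and adjoint parts and, on the finer level $\T_{k+1}$, insert the auxiliary variables $\tilde{\y}_{k+1}$ and $\tilde{\pee}_{k+1}$ of \eqref{auxst}--\eqref{auxad} driven by the coarse data $\textbf{f}+\bar{\U}_{k}$ and $\bar{\y}_{k}-\y_{d}$. Because these auxiliary problems share the same right-hand side across the two meshes, their discrete analogues \eqref{one}--\eqref{two} do enjoy a Galerkin-type structure once the companion operator $J$ of Lemma \ref{companion} and the nonconforming interpolation $I_{h}$ of Lemma \ref{interpolation} are inserted. Theorem \ref{Error equivalences}(c) then converts any bound on the auxiliary energy differences back into a bound on $\delta_{k,k+1}$.

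Second, mimicking the calculations \eqref{5.13}--\eqref{5.15} from the proof of \textbf{A3}, I would rewrite $a_{h}(\tilde{\y}_{k+1}-\bar{\y}_{k},\tilde{\y}_{k+1}-\bar{\y}_{k})$ using the orthogonality property (3) of Lemma \ref{interpolation}, the vanishing-mean property (3) of Lemma \ref{companion}, and the weak formulations \eqref{one} and \eqref{discst}. This yields a Pythagoras-type inequality modulo a consistency residual of the form $|K|\,\|\textbf{f}+\bar{\U}_{k}-(\textbf{f}+\bar{\U}_{k})_{K}\|\,\|\n_{h}(\cdot)\|$. A Young's inequality with a small parameter and the analogous adjoint identity driven by $\bar{\y}_{k}-\y_{d}$ prepare a telescopable inequality in $k$. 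Summing from $k=l$ to $k=l+m$ collapses the leading terms into $\enorm{\bar{\y}-\bar{\y}_{l}}_{\pw}^{2}+\enorm{\bar{\pee}-\bar{\pee}_{l}}_{\pw}^{2}$, which the reliability Theorem \ref{theorem 4.2} bounds by $C_{\mathrm{rl}}^{2}\eta_{l}^{2}$, while the residual oscillations each carry a factor $h_{K}^{2}$ that makes them summable against $\varepsilon\sum_{k}\eta_{k}^{2}$ under the threshold \eqref{constants}.

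The main obstacle is the state-adjoint cross-coupling: every bound on $\enorm{\bar{\y}_{k+1}-\bar{\y}_{k}}_{\pw}$ reintroduces $\|\bar{\U}_{k+1}-\bar{\U}_{k}\|\leq \alpha^{-1}\|\bar{\pee}_{k+1}-\bar{\pee}_{k}\|$ through the Lipschitz projection \eqref{a14}, and the adjoint estimate conversely feeds the state difference back in via the data $\bar{\y}_{k}-\y_{d}$ in \eqref{auxad}. Closing this loop will require a simultaneous Young's splitting with two weights balanced against $\alpha^{-1}$, so that both state and adjoint sums are absorbed on the left. The final constant $\Lambda_{4(\varepsilon)}$ then assembles from $C_{\mathrm{sa}}$, $C_{\mathrm{rl}}$, $\Lambda_{0}$, and the thresholds \eqref{constants}; the $\delta^{2m}$-type decay in those thresholds is precisely what converts the mesh-size-weighted residuals into a geometric tail and lets $\Lambda_{4(\varepsilon)}$ be chosen independently of the starting level $l$.
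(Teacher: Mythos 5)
Your overall plan (use the interpolation/companion machinery to make up for the lost Galerkin orthogonality, absorb the nonconformity and the change of control into $\varepsilon\sum_k\eta_k^2$) is in the right spirit, but two essential mechanisms of the actual argument are missing, and the step you flag as "the main obstacle" is resolved incorrectly. First, the coupling loop cannot be closed by "a simultaneous Young's splitting with two weights balanced against $\alpha^{-1}$": the cross terms such as $(\bar{\U}_{k+1}-\bar{\U}_k, I_{k+1}(\bar{\y}_{k+1}-\bar{\y}))\le \alpha^{-1}\|\bar{\pee}_{k+1}-\bar{\pee}_k\|\,\|I_{k+1}(\bar{\y}_{k+1}-\bar{\y})\|$ carry no smallness whatsoever, so any Young split leaves an $O(\alpha^{-2})\,e_{k+1}^2$ contribution that destroys the telescoping of $e_{k+1}^2-e_k^2$. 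The paper closes the loop through the $L^2$ a priori estimate \eqref{apriori}, which yields $\|\bar{\y}-\bar{\y}_{k+1}\|\le C_{\rm a2}\,\delta^{s}\,e_{k+1}$ and hence a genuine small factor $\delta^{s}$ in front of the coupling terms; the threshold condition \eqref{constants} on $\delta$ versus $\varepsilon$ exists precisely to make these terms absorbable. Without invoking this $L^2$/duality-type estimate (or some equivalent smallness), your Step 3 does not go through. Second, your claim that the consistency residuals "each carry a factor $h_K^2$ that makes them summable against $\varepsilon\sum_k\eta_k^2$" is not justified: after the Young split needed to preserve $\delta_{k,k+1}^2$ and $e_{k+1}^2$, these residuals appear as $\varepsilon^{-1}\mu_k^2(\T_k\setminus\T_{k+1})$, i.e.\ at the size of the volume estimator on refined elements with a \emph{large} constant. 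The paper handles them not by direct summation but by telescoping, using the separate volume estimator $\mu$ and its $h\Lambda_0$-weighted stability and reduction (Corollary \ref{corollary 5.1}) to convert $\mu_k^2(\T_k\setminus\T_{k+1})$ into $\mu_k^2-\mu_{k+1}^2$ plus $\delta$-small perturbations; nothing in your sketch produces this structure, and oscillations do not gain an extra power of $h$ over $\mu$ in general.

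There is also a structural issue with your choice of reference functions. The paper's telescoping starts from the identity $\tfrac12\bigl(\delta_{k,k+1}^2+e_{k+1}^2-e_k^2\bigr)=a_h(\bar{\y}_{k+1}-\bar{\y}_k,\bar{\y}_{k+1}-\bar{\y})+a_h(\bar{\pee}_{k+1}-\bar{\pee}_k,\bar{\pee}_{k+1}-\bar{\pee})$, which works because the \emph{continuous} optimal pair $(\bar{\y},\bar{\pee})$ is a fixed reference for all levels $k$; the interpolation properties of Lemma \ref{interpolation} then turn the right-hand side into refined-element volume contributions plus the $\delta^{s}$-weighted coupling terms. Your proposal instead inserts the level-dependent auxiliary solutions $\tilde{\y}_{k+1},\tilde{\pee}_{k+1}$ (driven by level-$k$ data) and appeals to Theorem \ref{Error equivalences}(c)--(d); but since the reference changes with $k$, the resulting inequalities do not telescope in $k$ without further argument, and you would still have to quantify the distance between the auxiliary and the true discrete solutions at level $k+1$, which reintroduces exactly the control/state coupling you set out to avoid. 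To repair the proposal you should work directly with $(\bar{\y},\bar{\pee})$ as in the key identity above, bring in the $L^2$ estimate \eqref{apriori} for the coupling, and add the $\mu$-estimator reduction step before summing.
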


\noindent The proof is divided into 4 steps.  
\begin{proof}
{\bf Step 1 : (Key estimate)}
Given any $l\in \mathbb{N}\cup\{0\}$ define $e_l^2:=\enorm{\bar{\y}-\bar{\y}_l}_{\rm{pw}}^2+\enorm{\bar{\pee}-\bar{\pee}_l}_{\rm{pw}}^2.$ For any $k\in\mathbb{N}_0$, definition of ${\bf d}_{k,k+1}$ and $e_k$ plus a re-grouping of the terms shows that  
\begin{equation}
	\label{5.13}
	\begin{aligned}
		{\bf d}_{k,k+1}^2+e_{k+1}^2-e_{k}^2&=(\enorm{\bar{\y}_{k+1}-\bar{\y}_k}_{\rm{pw}}^2+\enorm{\bar{\y}-\bar{\y}_{k+1}}_{\rm{pw}}^2-\enorm{\bar{\y}-\bar{\y}_{k}}_{\rm{pw}}^2)\\
		&+(\enorm{\bar{\pee}_{k+1}-\bar{\pee}_k}_{\rm{pw}}^2+\enorm{\bar{\pee}-\bar{\pee}_{k+1}}_{\rm{pw}}^2-\enorm{\bar{\pee}-\bar{\pee}_{k}}_{\rm{pw}}^2)\\
		&:= E_1+E_2.
	\end{aligned}
\end{equation}
Observe that term $\enorm{\bar{\y}-\bar{\y}_k}_{\rm{pw}}^2$ can be written as
\begin{equation*}
	\enorm{\bar{\y}-\bar{\y}_{k}}_{\rm{pw}}^2=a_h(\bar{\y}-\bar{\y}_{k},\bar{\y}-\bar{\y}_{k})=a_h(\bar{\y}-\bar{\y}_{k+1},\bar{\y}-\bar{\y}_{k})+a_h(\bar{\y}_{k+1}-\bar{\y}_{k},\bar{\y}-\bar{\y}_{k}).
\end{equation*}
This and an elementary algebra in equation \eqref{5.13} give
\begin{equation*}
	E_1=2a_h(\bar{\y}_{k+1}-\bar{\y}_k,\bar{\y}_{k+1}-\bar{\y})\mbox{ and }E_2=2a_h(\bar{\pee}_{k+1}-\bar{\pee}_k,\bar{\pee}_{k+1}-\bar{\pee}).
\end{equation*}
Substituting above estimate in \eqref{5.13} results in
\begin{equation}
	\label{5.14}
	\frac{1}{2}({\bf d}_{k,k+1}^2+e_{k+1}^2-e_{k}^2)=a_h(\bar{\y}_{k+1}-\bar{\y}_k,\bar{\y}_{k+1}-\bar{\y})+a_h(\bar{\pee}_{k+1}-\bar{\pee}_k,\bar{\pee}_{k+1}-\bar{\pee}).
\end{equation}
{\bf Step 2 : (Estimates for the right hand side of \eqref{5.14})}
Using the orthogonality condition from Lemma \ref{interpolation}{\it (iii)} as $a_h(\bar{\y}_{k+1},\bar{\y}_{k+1}-\bar{\y})=a_h(\bar{\y}_{k+1},I_{k+1}(\bar{\y}_{k+1}-\bar{\y}))$ and $a_h(\bar{\y}_k,\bar{\y}_{k+1}-\bar{\y})=a_h(\bar{\y}_k,I_k(\bar{\y}_{k+1}-\bar{\y}))$ leads to
\begin{equation}
\label{ik}
	a_h(\bar{\y}_{k+1}-\bar{\y}_k,\bar{\y}_{k+1}-\bar{\y})=a_h(\bar{\y}_{k+1},I_{k+1}(\bar{\y}_{k+1}-\bar{\y}))-a_h(\bar{\y}_k,I_k(\bar{\y}_{k+1}-\bar{\y})).
\end{equation}
Use of \eqref{dstate} at $\T_{k+1}$ and $\T_{k}$ level for the choice of test functions as ${\bf v}_{k+1}=I_{k+1}(\bar{\bf y}_{k+1}-\bar{\bf y})$ and ${\bf v}_k=I_{k}(\bar{\bf y}_{k+1}-\bar{\bf y})$, respectively, to obtain
\begin{equation}
\label{aa}
\begin{aligned}
    a_h(\bar{\y}_{k+1},I_{k+1}(\bar{\y}_{k+1}-\bar{\y}))&=({\bf f}+\bar{\bf u}_{k+1}, I_{k+1}(\bar{\y}_{k+1}-\bar{\y}))+b_{h}(I_{k+1}(\bar{\y}_{k+1}-\bar{\y}),\bar{r}_{k+1})\mbox{ and }\\
    a_h(\bar{\y}_{k},I_{k}(\bar{\y}_{k+1}-\bar{\y}))&=({\bf f}+\bar{\bf u}_{k}, I_{k}(\bar{\y}_{k+1}-\bar{\y}))+b_{h}(I_{k}(\bar{\y}_{k+1}-\bar{\y}),\bar{r}_{k}).
\end{aligned}
\end{equation}
Using definition of $b_{h}(\cdot,\cdot)$ and Lemma \ref{interpolation}{\it (i)}, we obtain
\begin{equation*}
    b_{h}(I_{k+1}(\bar{\y}_{k+1}-\bar{\y}),\bar{r}_{k+1})=\sum_{K\in\T_{k+1}}\int_{K}{\rm div}(I_{k+1}(\bar{\y}_{k+1}-\bar{\y}))\bar{r}_{k+1}\,dx=\sum_{K\in\T_{k+1}}\int_{K}{\Pi}_{k+1}{\rm div}(\bar{\y}_{k+1}-\bar{\y})\bar{r}_{k+1}\,dx.
\end{equation*}
Here, $\Pi_{k+1}$ denotes the piecewise constant $L^2$-projection operator over $\mathcal{P}_0(\T_{k+1})$. As $\bar{\bf y}_{k+1}\in {\bf V}_h$ is the solution of \eqref{dstate} at $\T_{k+1}$, we have ${\rm div}\bar{\bf y}_{k+1}|_{K}\in \mathcal{P}_0(\T_{k+1})$ for all $K\in\T_{k+1}$. Therefore, we have $\Pi_{k+1}{\rm div}\bar{\bf y}_{k+1}|_K={\rm div}\bar{\bf y}_{k+1}|_K$ for all $K\in\T_{k+1}$. Using this, choosing test function ${q}_{k+1}=\bar{r}_{k+1}$ in \eqref{dstate} at $\T_{k+1}$, and by introducing ${\rm div}\bar{\bf y}$, we obtain
\begin{equation*}
    b_{h}(I_{k+1}(\bar{\y}_{k+1}-\bar{\y}),\bar{r}_{k+1})=-\sum_{K\in\T_{k+1}}\int_{K}\Pi_{k+1}{\rm div}\bar{\bf y}\bar{r}_{k+1}\,dx=-\sum_{K\in\T_{k+1}}\int_{K}{\rm div}\bar{\bf y}\bar{r}_{k+1}\,dx=0.
\end{equation*}
Here the last step utilizes the choice of test function $q=\bar{r}_{k+1}$ in \eqref{cstate}. Analogous calculation leads to $b_{h}(I_{k}(\bar{\y}_{k+1}-\bar{\y}),\bar{r}_{k})=0$. Substitution of this in \eqref{aa} and \eqref{ik} and introducing $\bar{\bf u}_k$ yield
\begin{equation}
\label{5115}
a_h(\bar{\y}_{k+1}-\bar{\y}_k,\bar{\y}_{k+1}-\bar{\y})=({\bf f}+\bar{\U}_{k},(I_{k+1}-I_k)(\bar{\y}_{k+1}-\bar{\y}))+{(\bar{\U}_{k+1}-\bar{\U}_{k},I_{k+1}(\bar{\y}_{k+1}-\bar{\y})).}
\end{equation}

\noindent Lemma \ref{interpolation}{\it (iv)} and Lemma \ref{interpolation}{\it (ii)} (used twice for $(I_{k+1}-1)(\bar{\y}_{k+1}-\bar{\y}))$ and $(1-I_k)(\bar{\y}_{k+1}-\bar{\y}))$, use of \eqref{defuh} at $\T_k$ and $\T_{k+1}$ levels, Lipschitz continuity of projection operator, and stability of operator $I_{k+1}$ show
\begin{equation}
	\label{5.15}
	a_h(\bar{\y}_{k+1}-\bar{\y}_k,\bar{\y}_{k+1}-\bar{\y})\leq C_{\rm A41}\big(\sum_{K\in \T_k\backslash \T_{k+1}} h_{K} \|{\bf f}+\bar{\U}_k\| \enorm{\bar{\y}_{k+1}-\bar{\y}}_{\rm{pw}}+\alpha^{-1} {\|\bar{\pee}_{k+1}-\bar{\pee}_k\| \|\bar{\y}_{k+1}-\bar{\y}\|}\big).
\end{equation}
Applying the H\"{o}lder's inequality in the first term of \eqref{5.15} results in
\begin{equation}
	\label{5.16}
	\sum_{K\in \T_k\backslash \T_{k+1}}h_{K}\|{\bf f}+\bar{\U}_k\| \enorm{\bar{\y}_{k+1}-\bar{\y}}_{\rm{pw}}\leq \mu_{S,k}(\T_{k}\backslash\T_{k+1})\enorm{\bar{\y}_{k+1}-\bar{\y}}_{\pw}\leq\mu_{S,k}(\T_{k}\backslash\T_{k+1})e_{k+1}.
\end{equation}
Here, $\mu_{S,k}(\T_{k}\backslash\T_{k+1})$ denotes the volume estimator corresponding to the state equation at the level $k$ in the region $\T_{k}\backslash\T_{k+1}$.  For estimating the second term from \eqref{5.15}, observe that \eqref{3.3} and \eqref{3.8} give $\|\bar{\y}_{k+1}-\bar{\y}\|\leq C_{\rm A42}(\|\widetilde{\bf y}-\bar{\bf y}_{k+1}\|+\|\widetilde{\bf p}-\bar{\bf p}_{k+1}\|)$. Now use $L^2$- error estimates from Lemma \ref{lemma 4.1} to get
\begin{equation*}
    \|\widetilde{\bf y}-\bar{\bf y}_{k+1}\|+\|\widetilde{\bf p}-\bar{\bf p}_{k+1}\|\leq C_{\rm A43}\delta^s(\enorm{\widetilde{\bf y}-\bar{\bf y}_h}_{\rm pw}+\enorm{\widetilde{\bf p}-\bar{\bf p}_h}_{\rm pw}+\eta_{\rm st}(\T_{k+1})+\eta_{\rm adj}(\T_{k+1})).
\end{equation*}
Here the last estimate utilizes the Lemma \ref{lemma 4.2}. Use of above estimate yields
\begin{equation}
\label{hg}
\|\bar{\y}_{k+1}-\bar{\y}\|\leq C_{\rm A43}\delta^{s}(\enorm{\widetilde{\bf y}-\bar{\bf y}_{k+1}}_{\rm pw}+\enorm{\widetilde{\bf p}-\bar{\bf p}_{k+1}}_{\rm pw}+\eta_{\rm st}(\T_{k+1})+\eta_{\rm adj}(\T_{k+1})).
\end{equation}
 Use of triangle inequality, \eqref{cst}, \eqref{cad}, Lipschitz continuity of $\Pi_{[{\bf u}_a,{\bf u}_b]}$, and Lemma \ref{discrete sobolev}{\it (i)} results in
\begin{equation*}
    \enorm{\widetilde{\bf y}-\bar{\bf y}_{k+1}}_{\rm pw}+\enorm{\widetilde{\bf p}-\bar{\bf p}_{k+1}}_{\rm pw}\leq C_{\rm A44}(\enorm{\bar{\bf y}-\bar{\bf y}_{k+1}}_{\rm pw}+\enorm{\bar{\bf p}-\bar{\bf p}_{k+1}}_{\rm pw})=C_{\rm A44}e_{k+1}. 
\end{equation*}
Substitution of above estimate in \eqref{hg} yields
\begin{equation}
\label{518}
    \|\bar{\y}_{k+1}-\bar{\y}\|\leq C_{\rm A45}\delta^{s}(e_{k+1}+\eta_{k+1}). 
\end{equation}
Use of \eqref{518} along with Lemma \ref{discrete sobolev}{\it (ii)} gives
\begin{equation}
\label{519}
    \alpha^{-1}\|\bar{\bf p}_{k+1}-\bar{\bf p}_k\|\|\bar{\y}_{k+1}-\bar{\y}\|\leq C_{\rm A45}\alpha^{-1}\delta^{s}{\bf d}_{k,k+1}(e_{k+1}+\eta_{k+1}).
\end{equation}
Substitute \eqref{5.16} and \eqref{519} in \eqref{5.15} to obtain
\begin{equation}
\label{ay}
    a_h(\bar{\y}_{k+1}-\bar{\y}_k,\bar{\y}_{k+1}-\bar{\y})\leq C_{\rm sa1}(\mu_{S,k}(\T_k\setminus\T_{k+1})+\delta^{s}{\bf d}_{k,k+1})e_{k+1}+C_{\rm sa1}\delta^{s}{\bf d}_{k,k+1}\eta_{k+1}. 
\end{equation}
Repeating the analogous calculation for term $a_h(\bar{\pee}_{k+1}-\bar{\pee}_k,\bar{\pee}_{k+1}-\bar{\pee})$ gives
\begin{equation}
\label{ap}
    a_h(\bar{\pee}_{k+1}-\bar{\pee}_k,\bar{\pee}_{k+1}-\bar{\pee})\leq C_{\rm sa1}(\mu_{A,k}(\T_k\setminus\T_{k+1})+\delta^{s}{\bf d}_{k,k+1})e_{k+1}+C_{\rm sa1}\delta^{s}{\bf d}_{k,k+1}\eta_{k+1}. 
\end{equation}
Here $\mu_{A,k}(\T_{k}\backslash\T_{k+1})$ denotes the volume estimator corresponding to the adjoint equation at the level $k$ in the region $\T_{k}\backslash\T_{k+1}$. Substitution of \eqref{ay} and \eqref{ap} in  \eqref{5.14} gives
\begin{equation}
	\label{5.17}
	({\bf d}_{k,k+1}^2+e_{k+1}^2-e_{k}^2)\leq C_{\rm{sa}} (\mu_k(\T_k\backslash\T_{k+1})+\delta^{s} {\bf d}_{k,k+1}){e}_{k+1}+C_{\rm sa}\delta^{s}{\bf d}_{k,k+1}\eta_{k+1}
\end{equation}
with $\mu_{k}^2=\mu_{S,k}^2+\mu_{A,k}^2$.\vspace{0.1cm}\\
{\bf Step 3 : (Estimate for the term $\mu_{k}(\T_k\backslash \T_{k+1})$)} Using the Youngs inequality for $\epsilon=2$ in Corollary \ref{corollary 5.1} and adding the term $\mu_{k+1}^2(\T_k\cap\T_{k+1})$ both the sides leads to
\begin{equation*}
	\mu_{k+1}^2\leq \mu_{k+1}^2(\T_{k}\cap\T_{k+1})+\frac{3}{4}\mu_{k}^2(\T_{k}\backslash \T_{k+1})+3\delta^{2}\Lambda_0^2{\bf d}_{k,k+1}^2.
\end{equation*}

\noindent Use the identity $\mu_{k}^2(\T_{k}\backslash\T_{k+1})=\mu_{k}^2-\mu_{k}^2(\T_{k}\cap\T_{k+1})$ and some elementary algebra to conclude,
\begin{equation}
	\label{5.18}
	\frac{1}{4}\mu_{k}^2(\T_{k}\backslash \T_{k+1})\leq\mu_{k}^2-\mu_{k+1}^2+\mu_{k+1}^2(\T_{k}\cap\T_{k+1})-\mu_{k}^2(\T_{k}\cap\T_{k+1})+3\Lambda_0^2\delta^{2}{\bf d}_{k,k+1}^2.
\end{equation} Multiply both sides by $\mu_{k+1}(\T_{k}\cap\T_{k+1})+\mu_{k}(\T_{k}\cap\T_{k+1})$, in Corollary \ref{corollary 5.1}, to get
\begin{equation*}
	\mu_{k+1}^2(\T_{k}\cap\T_{k+1})-\mu_{k}^2(\T_{k}\cap\T_{k+1})\leq\Lambda_0\delta{\bf d}_{k,k+1}(\mu_{k+1}(\T_{k}\cap\T_{k+1})+\mu_{k}(\T_{k}\cap\T_{k+1})).
\end{equation*}
Substitute above in \eqref{5.18}, to get,
\begin{equation*}
	\frac{1}{4}\mu_{k}^2(\T_{k}\backslash \T_{k+1})\leq\mu_{k}^2-\mu_{k+1}^2+\Lambda_0\delta{\bf d}_{k,k+1}(\mu_{k+1}(\T_{k}\cap\T_{k+1})+\mu_{k}(\T_{k}\cap\T_{k+1}))+3\Lambda_0^2\delta^{2}{\bf d}_{k,k+1}^2.
\end{equation*}
Using the fact that $\mu_{k+1}^2(\T_{k}\cap\T_{k+1})\leq \mu_{k+1}^2$ and $\mu_{k}^2(\T_{k}\cap\T_{k+1})\leq \mu_{k}$ one can finally conclude,
\begin{equation}
	\label{5.19}
	\frac{1}{4}\mu_{k}^2(\T_{k}\backslash \T_{k+1})\leq\mu_{k}^2-\mu_{k+1}^2+\Lambda_0\delta{\bf d}_{k,k+1}(\mu_{k+1}+\mu_{k})+3\Lambda_0^2\delta^{2}{\bf d}_{k,k+1}^2.
\end{equation}
{\bf Step 4 : (Crucial estimates)}
The Young's inequality used thrice with $\epsilon=2^{-3}\varepsilon C_{\rm{vrel}}^{-2}, a=C_{\rm{sa}} \mu_{k}(\T_{k}\backslash \T_{k+1})$, $b=e_{k+1}$, $\epsilon=2^2C_{\rm sa}^2\delta^{2\gamma}$, $a=C_{\rm sa}\delta^{s}{\bf d}_{k,k+1}$, $b=e_{k+1}$, and $\epsilon=C_{\rm sa}^22^2\delta^{2\gamma}$, $a=C_{\rm sa}\delta^{s}{\bf d}_{k,k+1}$, $b=\eta_{k+1}$ in \eqref{5.17} gives 
\begin{align*}
	({\bf d}_{k,k+1}^2+e_{k+1}^2-e_{k}^2)\leq& 2^2 \varepsilon^{-1}C_{\rm rl}^2 C_{\rm sa}^2\mu_{k}^2(\T_{k}\backslash\T_{k+1})+2^{-4}\varepsilon C_{\rm{rl}}^{-2} e_{k+1}^2+2C_{\rm sa}^2\delta^{2\gamma}e_{k+1}^2+2C_{\rm sa}^2\delta^{2\gamma}\eta_{k+1}^2+2^{-2}{\bf d}_{k,k+1}^2\\
	=&2^2 \varepsilon^{-1}C_{\rm rl}^2 C_{\rm sa}^2\mu_{k}^2(\T_{k}\backslash\T_{k+1})+(2^{-4}\varepsilon C_{\rm{rl}}^{-2}+2C_{\rm sa}^2\delta^{2\gamma}) e_{k+1}^2+2C_{\rm sa}^2\delta^{2\gamma}\eta_{k+1}^2+2^{-2}{\bf d}_{k,k+1}^2.
\end{align*}
The estimate from \eqref{constants} shows {{$2C_{\rm{sa}}^2\delta^{2\gamma}\leq 2^{-4} \varepsilon C_{\rm{rl}}^{-2}$}} and {{$2C_{\rm sa}^2\delta^{2\gamma}\leq 2^{-4}\Lambda_{\rm mon}^{-2}\varepsilon$}}
\begin{equation*}
	\frac{3}{4}{\bf d}_{k,k+1}^2+e_{k+1}^2-e_k^2\leq 2^2 \varepsilon^{-1}\Lambda_5^2\mu_{k}^2(\T_{k}\backslash\T_{k+1})+2^{-3}\varepsilon C_{\rm rl}^{-2} e_{k+1}^2+2^{-4}\Lambda_{\rm mon}^{-2}\varepsilon\eta_{k+1}^2.
\end{equation*}
Using quasi-monotonicity property as $\eta_{k+1}^2\leq \Lambda_{\rm mon}^2\eta_{k}^2$ \cite[Theorem 4.1]{CCRH17} we get
\begin{equation}
    \label{k+1}
    \frac{3}{4}{\bf d}_{k,k+1}^2+e_{k+1}^2-e_k^2\leq 2^2 \varepsilon^{-1}\Lambda_5^2\mu_{k}^2(\T_{k}\backslash\T_{k+1})+2^{-3}\varepsilon C_{\rm rl}^{-2} e_{k+1}^2+2^{-4}\varepsilon\eta_{k}^2.
\end{equation}
\noindent Substitute \eqref{5.19} in the first term of above displayed inequality to get
\begin{align*}
	\frac{3}{4}{\bf d}_{k,k+1}^2+e_{k+1}^2-e_k^2&\leq 2^4 \varepsilon^{-1}\Lambda_5^2(\mu_{k}^2-\mu_{k+1}^2)+2^4 \varepsilon^{-1}\Lambda_5^2\Lambda_0\delta{\bf d}_{k,k+1}(\mu_{k}+\mu_{k+1})\\
	&+3\times2^4 \varepsilon^{-1}\Lambda_5^2\Lambda_0^2\delta^2{\bf d}_{k,k+1}^2+2^{-3}\varepsilon C_{\rm rl}^{-2} e_{k+1}^2+2^{-4}\varepsilon\eta_{k}^2.
\end{align*}

\noindent Estimate {{$3\times2^4 \varepsilon^{-1}\Lambda_5^2\Lambda_0^2\delta^2\leq 2^{-2}$}} from \eqref{constants} leads to
\begin{equation}
	\label{5.20}
	2^{-1}{\bf d}_{k,k+1}^2+e_{k+1}^2-e_k^2\leq 2^{4}\varepsilon^{-1}\Lambda_5^2 (\mu_{k}^2-\mu_{k+1}^2)+2^{4}\varepsilon^{-1}\Lambda_5^2\Lambda_0 \delta{\bf d}_{k,k+1}(\mu_{k}+\mu_{k+1})+2^{-3}\varepsilon C_{\rm rl}^{-2} e_{k+1}^2+2^{-4}\varepsilon\eta_{k}^2.
\end{equation}
Now a use of Young's inequality with $\epsilon=2^{-4}\varepsilon/(1+\Lambda_{\rm mon}^2), a=2^{4}\varepsilon^{-1}\Lambda_5^2\Lambda_0\delta{\bf d}_{k,k+1}$, and $b=(\mu_{k}+\mu_{k+1})$ in the second term of the above displayed inequality yields
\begin{equation*}	2^4\varepsilon^{-1}\Lambda_5^2\Lambda_0\delta{\bf d}_{k,k+1}(\mu_{k}+\mu_{k+1})\leq 2^{11}\varepsilon^{-3}\Lambda_5^4\Lambda_0^2\delta^2(1+\Lambda_{\rm mon}^2){\bf d}_{k,k+1}^2 +2^{-4}\varepsilon(1+\Lambda_{\rm mon}^2)^{-1}(\mu_{k+1}^2+\mu_{k}^2).
\end{equation*}
The bound \eqref{constants} implies { $2^{11}\varepsilon^{-3}\Lambda_5^4\Lambda_0^2\delta^2(1+\Lambda_{\rm mon}^2)\leq 2^{-2}$}, and hence
\begin{equation}
	\label{5.21}
    2^4\varepsilon^{-1}\Lambda_5^2\Lambda_0\delta{\bf d}_{k,k+1}(\mu_{k}+\mu_{k+1})\leq 2^{-2}{\bf d}_{k,k+1}^2 +2^{-4}\varepsilon(1+\Lambda_{\rm mon}^2)^{-1}(\mu_{k+1}^2+\mu_{k}^2).
\end{equation}
Substitute \eqref{5.21} in \eqref{5.20} to get
\begin{equation*}
	{\bf d}_{k,k+1}^2+4(e_{k+1}^2-e_k^2)\leq 2^{6}\varepsilon^{-1}\Lambda_5^{2}(\mu_{k}^2-\mu_{k+1}^2)+2^{-2}\varepsilon (1+\Lambda_{\rm mon}^2)^{-1}(\mu_{k+1}^2+\mu_{k}^2)+2^{-1}\varepsilon C_{\rm rl}^{-2} e_{k+1}^2+2^{-2}\varepsilon\eta_{k}^2.
\end{equation*}
\noindent Taking summation from $l$ to $l+m$ both sides leads to $e_{l+m+1}^2-e_l^2$ on the left hand side and $\mu_{l}^2-\mu_{l+m+1}^2$ on the right hand side, resulting in
\begin{align}
	\sum_{k=l}^{l+m}{\bf d}_{k,k+1}^2+4(e_{l+m+1}^2-e_l^2)\leq & 2^{6}\varepsilon^{-1}\Lambda_5^{2}(\mu_{l}^2-\mu_{l+m+1}^2)+2^{-2}\varepsilon(1+\Lambda_{\rm mon}^2)^{-1} \sum_{k=l}^{l+m}(\mu_{k+1}^2+\mu_{k}^2)\nonumber\\
    &+2^{-1}\varepsilon C_{\rm rl}^{-2}\sum_{k=l}^{l+m} e_{k+1}^2+2^{-2}\varepsilon\sum_{k=l}^{l+m}\eta_{k}^2.\label{5.23}
\end{align}
\noindent Rearrange the terms from equation \eqref{5.23} to obtain
\begin{align*}
	\sum_{k=l}^{l+m}{\bf d}_{k,k+1}^2\leq 
    &4e_l^2+2^{6}\varepsilon^{-1}\Lambda_5^{2}\mu_{l}^2+2^{-2}\varepsilon(1+\Lambda_{\rm mon}^2)^{-1}\sum_{k=l}^{l+m}(\mu_{k+1}^2+\mu_{k}^2)+2^{-2}\varepsilon\sum_{k=l}^{l+m}\eta_k^2\\
    &+2\varepsilon^{-1}C_{\rm rl}^{-2}\sum_{k=l}^{l+m-1}e_{k+1}^2+(2^{-1}\varepsilon C_{\rm rl}^{-2}-4)e_{l+m+1}^{2}.
\end{align*}
The bound on $\varepsilon$ makes $(2^{-1}\varepsilon C_{\rm rl}^{-2}-4)<0$. Theorem \ref{theorem 4.3} shows $e_l^2\leq C_{\rm rl}^2\eta_{l}^2$ ($C_{\rm rl}=C_{\rm vrel})$. From the definition of estimator from Section \ref{adaptive convergence} we observe $\mu_l\leq \eta_{l}$. Moreover, the quasi monotonicity property of estimator shows $\mu_{k+1}^2\leq \eta_{k+1}^2\leq \Lambda_{\rm mon}^2\eta_k^2$ for all $k\in\mathbb{N}$. Using these properties in above mentioned estimates result in
\begin{align*}
    \sum_{k=l}^{l+m}{\bf d}_{k,k+1}^2&\leq (4+2^{6}\varepsilon^{-1}\Lambda_{5}^2)\eta_{l}^2+2^{-2}\varepsilon\sum_{k=l}^{l+m}\eta_k^2+2^{-2}\varepsilon\sum_{k=l}^{l+m}\eta_k^2+2^{-1}\varepsilon\sum_{k=l}^{l+m-1}\eta_{k+1}^2
    \\
    &\leq(4+2^{6}\varepsilon^{-1}\Lambda_{5}^2)\eta_{l}^2+2^{-1}\varepsilon\sum_{k=l}^{l+m}\eta_k^2+2^{-1}\varepsilon\sum_{k=l}^{l+m}\eta_{k}^2.
\end{align*}
This completes the proof for ${\bf (A4_{\varepsilon}})$ for $\Lambda_{4(\varepsilon)}:=4+2^{6}\varepsilon^{-1}\Lambda_{5}^2.$
\end{proof}
\section{Verification of axiom of adaptivity (discretised approach)}
\label{verification of discretised axioms}
\noindent Recall from \eqref{estidef} from Section \ref{adaptive convergence} for discretised approach, the estimator is defined as $\eta^2(\T):=\eta_{\rm st}^2(\T)+\eta_{\rm adj}^2(\T)+\eta_{\rm C}^2(\T)$. Furthermore, ${\bf d}^2(\T,\widehat{\T})$ is defined as  ${\bf d}^2(\T,\widehat{\T}):=\|\widehat{\bar{\U}}_h-\bar{\U}_h\|^2+\enorm{\widehat{\bar{\y}}_h-\bar{\y}_h}_{\rm{pw}}^2+\3^2$.

\subsection{Proof of (A1)}
\label{disca1}
\begin{proof}
Reverse triangle inequality shows
\begin{equation}
	\begin{aligned}
		|\widehat{\eta}(\T\cap\widehat{\T})-\eta(\widehat{\T}\cap\T)|^2\leq & \sum_{K\in\T\cap\widehat{\T}}\big((\widehat{\mu}_{S,K}-\mu_{S,K})^2+(\widehat{\mu}_{A,K}-\mu_{A,K})^2+(\widehat{\eta}_{C,K}-\eta_{C,K})^2\\
        &+(\widehat{\rho}_{S,\E(K)}-\rho_{S,\E(K)})^2
		+(\widehat{\rho}_{A,\E(K)}-\rho_{A,\E(K)})^2\big). 
	\end{aligned}
\end{equation}
Again use of reverse triangle inequality and Lemma \ref{discrete sobolev}{\it (ii)} yield
\begin{align}
	\label{6.2}
	(\widehat{\mu}_{S,K}-\mu_{S,K})^2&\leq h_{K}^2 \|\widehat{\bar{\U}}_h-\bar{\U}_h\|_{K}^2\mbox{ and }(\widehat{\mu}_{A,K}-\mu_{A,K})^2\leq h_{K}^2 C_{\rm{PI}}^2 \enorm{\widehat{\bar{\y}}_h-\bar{\y}_h}_{\rm{pw}}^2.
\end{align}
Recall $\widehat{\eta}_{C,K}=h_K\|\n_h(\widehat{\bar{\U}}_h-\alpha^{-1}\widehat{\bar{\pee}}_h)\|_{K}$ and $\eta_{C,K}=h_K\|\n_h(\bar{\U}_h-\alpha^{-1}\bar{\pee}_h)\|_{K}$. Using these definitions and applying reverse triangle inequality we observe, $(\widehat{\eta}_{C,K}-\eta_{C,K})^2\leq h_K^2\alpha^{-2}\3^2$. Therefore 
\begin{equation}
	\label{6.3}
	\sum_{K\in\T\cap\widehat{\T}}\big((\widehat{\mu}_{S,K}-\mu_{S,K})^2+(\widehat{\mu}_{A,K}-\mu_{A,K})^2+{(\widehat{\eta}_{C,K}-\eta_{C,K})^2}\big)\leq h^2C_{\rm M\Pi}^2{\bf d}^2(\T,\widehat{\T}) 
\end{equation}
with $C_{\rm M\Pi}^2:=\max\{1,C_{\rm PI}^2,\alpha^{-2}\}$. Using \eqref{6.3} for estimating the volume estimators and using \eqref{edge1}-\eqref{edge2} for getting estimate on edge estimators, we get the desired estimate as $|\widehat{\eta}(\T\cap\widehat{\T})-\eta(\widehat{\T}\cap\T)|\leq \Lambda_{1} {\bf d}(\T,\widehat{\T})$ with $\Lambda_1^2:=|\Omega|C_{\rm M\Pi}^2+C_{\rm djc}^2$. 
\end{proof}

\subsection{Proof of (A2)}
\begin{proof}
	Reverse triangle inequality and some elementary calculation shows
	\begin{equation*}
		\begin{aligned}
			\widehat{\eta}(\widehat{\T} \setminus \T)\leq & \biggl(\sum_{K\in\T\setminus\widehat{\T}}\sum_{T\in\widehat{T}(K)}(\widehat{\mu}_{S,T}-\mu_{S,T})^2+(\widehat{\mu}_{A,T}-\mu_{A,T})^2+(\widehat{\eta}_{C,K}-\eta_{C,K})^2+\\
			&+(\widehat{\rho}_{S,\E(T)}-\rho_{S,\E(T)})^2+(\widehat{\rho}_{A,\E(T)}-\rho_{A,\E(T)})^2\biggr)^{1/2}+\biggl(\sum_{K\in\T\setminus\widehat{\T}}\sum_{T\in\widehat{T}(K)}{\mu}_{S,T}^2+{\rho}_{S,\E(T)}^2\\
			&+{\mu}_{A,T}^2+{\rho}_{A,\E(T)}^2+\eta_{C,K}^2\biggr)^{1/2}=: S_1+S_2.
		\end{aligned}
	\end{equation*}
\noindent For estimating $S_1$, we follow analogous steps as mentioned in Section \ref{disca1} over $\widehat{\T}\setminus\T$ to get 
\begin{equation}
\label{6.4}
	S_1^2\leq (|\Om| C_{\rm M\Pi}^2+C_{\rm djc}^2){\bf d}^2(\T,\widehat{\T}).
\end{equation}
 As $T\in\widehat{T}(K)$ we can use the relation $h_T= |T|^{1/2}\leq 2^{-1/2} |K|^{1/2}= 2^{-1/2} h_{K}$ to obtain
\begin{equation}
\label{6.5}
	\begin{aligned}
	S_2^2&\leq \sum_{K\in\T\setminus\widehat{\T}} 2^{-1}({\mu}_{S,K}^2+{\mu}_{A,K}^2+{\eta}_{C,K}^2)+2^{-1/2}({\rho}_{S,\E(K)}^2+{\rho}_{A,\E(K)}^2)\\
	&\leq2^{-1/2}\sum_{K\in\T\setminus\widehat{\T}}({\mu}_{S,K}^2+{\rho}_{S,\E(K)}^2+{\mu}_{A,K}^2+{\rho}_{A,\E(K)}^2+{\eta}_{C,K}^2).
	\end{aligned}
\end{equation}
\noindent Bounds on $S_1$ and $S_2$ leads to desired estimate as $\widehat{\eta}(\widehat{\T} \setminus \T)\leq \Lambda_2 {\bf d}(\T,\widehat{\T})
	+\rho
	_2\eta(\T\setminus\widehat{\T})\mbox{ with }\Lambda_2^2:=|\Omega|C_{\rm M\Pi}^2+C_{\rm djc}^2\mbox{ and }\rho=2^{-1/4}$.
\end{proof} 

\begin{corollary}
	\label{corollary 6.1}
	Let the volume estimator with respect to $\widehat{\T}$ be $\widehat{\mu}^2 := \sum_{K\in\widehat{\T}}(\widehat{\mu}_{S,K}^2+\widehat{\mu}_{A,K}^2+\widehat{\eta}_{C,K}^2).$ Then for $\Lambda_0:= C_{\rm M\Pi}$ following holds.
	\begin{align}
		|\widehat{\mu}(\T\cap\widehat{\T})-\mu(\T\cap\widehat{\T})|&\leq h\Lambda_0 {\bf d}(\T,\widehat{\T})\hspace{0.2cm}\mbox{and}\hspace{0.2cm}\widehat{\mu}(\widehat{\T}\backslash \T)\leq  2^{-1/2}\mu(\T\backslash\widehat{\T})+h\Lambda_0{\bf d}(\T,\widehat{\T}).
	\end{align}
\end{corollary}
\begin{proof}
    The proof follows from estimate mentioned in \eqref{6.3} and \eqref{6.5}.
\end{proof}
\subsection{Proof of (A3)} 
The proof of ${\bf (A3)}$ follows from Section \ref{a3 proof v} for the error terms $\6$ and $\7$. The bound for $\normtwo$ is estimated below.
\begin{proof}
Theorem \ref{d error equivalence}{\it (iii)} gives the following estimate on ${\bf d}^2(\T,\widehat{\T})$ as
\begin{equation}
\label{cr}
	\|\widehat{\bar{\U}}_h-\bar{\U}_h\|+\2+\3\leq C_{\rm R}(\normtwo+\6+\7)
\end{equation}
\noindent with $C_{\rm R}:=\max\{C_{27}(C_{22}+C_{24})+C_{24}, (1+C_{27})(1+C_{22})\}$. From Section \ref{a3 proof v} we conclude
\begin{equation}
	\label{6.8}
	\6^2\leq C_{\rm{S}}\eta_{\rm st}^2(\mathcal{R}(\T,\widehat{\T}))\mbox{ and }\7^2\leq C_{\rm{A}}\eta_{\rm adj}^2(\mathcal{R}(\T,\widehat{\T})). 
\end{equation}
In order to find estimate on the term $\|\widetilde{\bf u}_h-\bar{\bf u}_h\|$, use definitions of $\bar{\bf u}_h$ and $\widetilde{\bf u}_h$ from \eqref{defuh} and \eqref{2.21}, respectively to observe $\|\widetilde{\bf u}_h-\bar{\bf u}_h\|=\|\widetilde{\bf u}_h-\bar{\bf u}_h\|_{\T\setminus\widehat{\T}}$. Therefore
\begin{equation}
	\label{6.7}
	\begin{aligned}
\normtwo^2&=\normtwo_{\T\setminus\widehat{\T}}^2=\Bigl\lVert\Pi_{[\U_a,\U_b]}\widehat{\Pi}_0\left(-\frac{\bar{\pee}_h}{\alpha}\right)-\Pi_{[\U_a,\U_b]}\Pi_0\left(-\frac{\bar{\pee}_h}{\alpha}\right)\Bigr\rVert_{\T\setminus\widehat{\T}}^2 \\
		& \leq \alpha^{-2}\Bigl\lVert\widehat{\Pi}_0\left(-\frac{\bar{\pee}_h}{\alpha}\right)-\Pi_{0}\left(-\frac{\bar{\pee}_h}{\alpha}\right)\Bigr\rVert_{\T\setminus\widehat{\T}}^2=\alpha^{-2}\Bigl\lVert\widehat{\Pi}_{0}\left(-\frac{\bar{\pee}_h}{\alpha}\right)-\widehat{\Pi}_{0}\Pi_{0}\left(-\frac{\bar{\pee}_h}{\alpha}\right)\Bigr\rVert_{\T\setminus\widehat{\T}}^2\\
		&\leq \alpha^{-2}\Bigl\lVert\left(-\frac{\bar{\pee}_h}{\alpha}\right)-\Pi_{0}\left(-\frac{\bar{\pee}_h}{\alpha}\right)\Bigr\rVert_{\T\setminus\widehat{\T}}^2 \leq  \alpha^{-4} C_{\rm{PI}}^2\enorm{h_{\T} \phb}_{\T\setminus\widehat{\T}}^2\leq \alpha^{-2}C_{\rm{PI}}^2\eta_{C}^2(\mathcal{R}(\T,\widehat{\T})).
	\end{aligned}
\end{equation} 
\noindent Use \eqref{cr}-\eqref{6.7} to conclude the discrete reliability for $\Lambda_3^2:=3C_{\rm R}^2\max\{C_{\rm S}^2, C_{\rm A}^2, \alpha^{-2}C_{\rm PI}^2\}$. 
\end{proof} 

\subsection{Proof of (A4)}
\label{proof of a4d}
The proof of $\textbf{(A4)}$ follows from $(\textbf{A4}_{\epsilon})$ as mentioned in \cite[Theorem 3.1]{CCRH17}. Assume $0<\varepsilon<8C_{\rm drl}^2$ and choose maximum $\delta$ such that
\begin{equation}
    \label{constants2}
    \max\{\Delta_5, \Delta_6, \Delta_7, \Delta_8\}\leq \varepsilon
\end{equation}
with $\Delta_i$ for $i=1,2,3,4$ as defined in Table \ref{tab:constants-horizontal-2}. 

\begin{table}[hbt!]
\footnotesize
\centering
\renewcommand{\arraystretch}{1.4}
\begin{tabular}{|c|c|c|c|}
\hline
\textbf{Constant} & \textbf{Definition} & \textbf{Constant} & \textbf{Definition} \\
\hline
$C_{\rm A46}$ & $\max\{2C_{\rm I}, C_{\rm A45}\}$ & $C_{\rm A47}$ & $\max\{\max\{1, C_{\rm dP}\}, 2C_{\rm rel}+C_{\rm rst}(1+\Lambda_{\rm mon})\}$ \\
\hline
$C_{\rm A48}$ & $\max\{\max\{1, C_{\rm dP}\}, 2C_{\rm rel}+C_{\rm sadj}(1+\Lambda_{\rm mon})\}$ & $C_{\rm A49}$ & $2\max\{C_{\rm PI}, C_{\rm A47}, C_{\rm A48}\}$ \\
\hline
$C_{\rm sa}$ & $2\max\{C_{\rm A46}, C_{\rm A47}\}$ & $C_{\rm drl}$ & $C_{\rm drel}$ \\
\hline
$\Lambda_6$ & $C_{\rm drl}C_{\rm sa}$ & $\Delta_5$ & $2^4C_{\rm drl}^2C_{\rm sa}^2\delta^{s}(2\delta^{s}+1)$ \\
\hline
$\Delta_6$ & $2^4(2^{-2}\delta^{s}+2C_{\rm sa}^2\Lambda_{\rm mon}^2\delta^{2s})$ & $\Delta_7$ & $3\times 2^6\Lambda_6^2\Lambda_0^2\delta^2$ \\
\hline
$\Delta_8$ & $2^{2/3}2^{11/3}\Lambda_6^{4/3}\Lambda_0^{2/3}\delta^{2/3}(1+\Lambda_{\rm mon}^2)^{1/3}$ & & \\
\hline
\end{tabular}
\caption{Definitions of various constants used in proof of $(\textbf{A4}_{\epsilon})$.}
\label{tab:constants-horizontal-2}
\end{table}

\noindent Here $(\textbf{A4}_{\epsilon})$ is as mentioned in Theorem \ref{A4ep}. Most of the steps detailed below closely follow the proof of \textbf{(A4)} for the variational approach, as presented in Section \ref{proof of a4v}. For the discretised control approach, the proof of \textbf{(A4)} is outlined below, highlighting the key estimates and referencing the corresponding results from the variational case.
\begin{proof}
{\bf Step 1 : (Key estimates)} Given any $l\in \mathbb{N}\cup\{0\}$ define $e_l^2:=\|\bar{\bf u}-\bar{\bf u}_l\|^2+\enorm{\bar{\y}-\bar{\y}_l}_{\rm{pw}}^2+\enorm{\bar{\pee}-\bar{\pee}_l}_{\rm{pw}}^2$ and ${\bf d}_{k,k+1}:=\|\bar{\bf u}_{k+1}-\bar{\bf u}_k\|^2+\enorm{\bar{\bf y}_{k+1}-\bar{\bf y}_k}_{\rm pw}^2+\enorm{\bar{\bf p}_{k+1}-\bar{\bf p}_k}_{\rm pw}^2$. For any $k\in\mathbb{N}\cup\{0\}$, definition of ${\bf d}_{k,k+1}$ and $e_k$ plus a re-grouping of the terms show 
\begin{equation}
\begin{aligned}
	\label{6.10}
		\frac{1}{2}({\bf d}_{k,k+1}^2+e_{k+1}^2-e_{k}^2)&=a_h(\bar{\y}_{k+1}-\bar{\y}_k,\bar{\y}_{k+1}-\bar{\y})+a_h(\bar{\pee}_{k+1}-\bar{\pee}_k,\bar{\pee}_{k+1}-\bar{\pee})\\
        &+(\bar{\bf u}_{k+1}-\bar{\bf u}_k, \bar{\bf u}_{k+1}-\bar{\bf u}):=E_1+E_2+E_3. 
\end{aligned}
\end{equation}
{\bf Step 2 : (Estimates for $E_1$ and $E_2$)}
Estimate from \eqref{5115}, Lemma \ref{interpolation}{\it (ii)} and {\it (iv)} along with stability of $I_{k}$ operator result in
\begin{equation*}
	a_h(\bar{\y}_{k+1}-\bar{\y}_k,\bar{\y}_{k+1}-\bar{\y})\leq 2C_{\rm{I}} \sum_{K\in \T_k\backslash \T_{k+1}} h_{K} \|f+\bar{\U}_k\| \enorm{\bar{\y}_{k+1}-\bar{\y}}_{\rm{pw}}+\|\bar{\U}_{k+1}-\bar{\U}_k\|\|\bar{\y}_{k+1}-\bar{\y}\|.
\end{equation*}
Use \eqref{5.16} to get
\begin{equation*}
	a_h(\bar{\y}_{k+1}-\bar{\y}_k,\bar{\y}_{k+1}-\bar{\y})\leq 2C_{\rm I} \mu_{S,k}(\T_{k}\backslash\T_{k+1})e_{k+1}+\|\bar{\U}_{k+1}-\bar{\U}_k\|\|\bar{\y}_{k+1}-\bar{\y}\|. 
\end{equation*}
Use of \eqref{518} gives
\begin{equation}
    \label{6.11}
	E_1=a_h(\bar{\y}_{k+1}-\bar{\y}_k,\bar{\y}_{k+1}-\bar{\y})\leq C_{\rm A46}(\mu_{S,k}(\T_{k}\backslash\T_{k+1})e_{k+1}+\delta^{s}{\bf d}_{k,k+1}(e_{k+1}+\eta_{k+1})). 
\end{equation}
\noindent Analogous calculation leads to
\begin{equation}
	\label{6.12}
	E_2=a_h(\bar{\pee}_{k+1}-\bar{\pee}_k,\bar{\pee}_{k+1}-\bar{\pee})\leq C_{\rm A46}(\mu_{A,k}(\T_{k}\backslash\T_{k+1})e_{k+1}+\delta^{s}{\bf d}_{k,k+1}(e_{k+1}+\eta_{k+1})).
\end{equation}
{\bf Step 3 : (Estimates for $E_3$)}
\noindent Using Theorem \ref{d error equivalence}{\it (iv)} one can observe
\begin{equation}
	\label{6.13}
	\|\bar{\U}_{k+1}-\bar{\U}_k\|\leq C_{\rm 26}(\|\widetilde{\bf u}_{k+1}-\bar{\bf u}_k\|+\|\widehat{\widetilde{\bf y}}_{k+1}-\bar{\bf y}_k\|+\|\widehat{\widetilde{\bf p}}_{k+1}-\bar{\bf p}_k\|):=C_{\rm 26}(I_1+I_2+I_3).
\end{equation}
\noindent Estimate in \eqref{6.7} gives bound on $I_1$ as 
\begin{equation}
\label{i1}
	I_1=\|\widetilde{\bf u}_{k+1}-\bar{\bf u}_k\|\leq C_{\rm PI}\eta_{C,k}(\T_{k}\setminus\T_{k+1}). 
\end{equation}
The use of the triangle inequality by introducing the term $\widetilde{\bf y}$ in $I_2$ along with \cite[Theorem 4.1]{MR3194820}, Lemma \ref{a pos} give
\begin{equation*}
    \|\widehat{\widetilde{\bf y}}_{k+1}-\bar{\bf y}_k\|\leq \delta^{s}(\enorm{\widehat{\widetilde{\bf y}}_{k+1}-\widetilde{\bf y}}_{\rm pw}+\enorm{{\bar{\bf y}}_{k}-\widetilde{\bf y}}_{\rm pw}+C_{\rm rst}(\eta_{\rm st}(\T_{k+1})+\eta_{\rm st}(\T_k)).
\end{equation*}
A use of triangle inequality, by introducing the term $\bar{\bf y}_k$ in first term of above estimate gives
\begin{equation*}
    \|\widehat{\widetilde{\bf y}}_{k+1}-\bar{\bf y}_k\|\leq \delta^{s}(\enorm{\widehat{\widetilde{\bf y}}_{k+1}-\bar{\bf y}_k}_{\rm pw}+2\enorm{{\bar{\bf y}}_{k}-\widetilde{\bf y}}_{\rm pw}+C_{\rm rst}(\eta_{\rm st}(\T_{k+1})+\eta_{\rm st}(\T_k)).
\end{equation*}
Lemma \ref{lemma 4.2} gives
\begin{equation*}
    \|\widehat{\widetilde{\bf y}}_{k+1}-\bar{\bf y}_k\|\leq \delta^{s}(\enorm{\widehat{\widetilde{\bf y}}_{k+1}-\bar{\bf y}_k}_{\rm pw}+2C_{\rm rel}\eta_{\rm st}(\T_k)+C_{\rm rst}(\eta_{\rm st}(\T_{k+1})+\eta_{\rm st}(\T_k)).
\end{equation*}
Use of triangle inequality by introducing the term $\bar{\bf y}_{k+1}$, \eqref{dst} along with monotonicity property as $\eta_{\rm st}
(\T_{k+1})\leq \Lambda_{\rm mon}\eta(\T_k)$ yield
\begin{equation}
\label{i2}
   I_2=\|\widehat{\widetilde{\bf y}}_{k+1}-\bar{\bf y}_k\|\leq C_{\rm A47}\delta^{s}({\bf d}_{k,k+1}+\eta_{\rm st}(\T_k)).
\end{equation}
Similar calculation yields
\begin{equation}
\label{i3}
    I_3=\|\widehat{\widetilde{\bf p}}_{k+1}-\bar{\bf p}_k\|\leq C_{\rm A48}\delta^{s}({\bf d}_{k,k+1}+\eta_{\rm adj}(\T_k)).
\end{equation}
Substitution of \eqref{i1}-\eqref{i3} in \eqref{6.13} leads to
\begin{equation}
\label{baru}
        \|\bar{\U}_{k+1}-\bar{\U}_k\|\leq C_{\rm A49}(\eta_{C,k}(\T_k\setminus\T_{k+1})+\delta^{s}({\bf d}_{k,k+1}+\eta_k)).
\end{equation}
Now consider term $E_3=(\bar{\bf u}_{k+1}-\bar{\bf u}_k, \bar{\bf u}_{k+1}-\bar{\bf u})$. Use of Cauchy Schwartz inequality and \eqref{baru} gives
\begin{equation}
\label{e3}
    E_3\leq C_{\rm A49}(\eta_{C,k}(\T_k\setminus\T_{k+1})+\delta^{s}({\bf d}_{k,k+1}+\eta_k))e_{k+1}.
\end{equation}
{\bf Step 4 : (Crucial estimates)}
Substitution of \eqref{6.11}, \eqref{6.12}, and \eqref{e3} in \eqref{6.10} gives
\begin{equation*}
    ({\bf d}_{k,k+1}^2+e_{k+1}^2-e_{k}^2)\leq C_{\rm sa}(\mu_{k}(\T_k\setminus\T_{k+1})+\delta^{s}{\bf d}_{k,k+1}+\delta^{s}\eta_{k})e_{k+1}+C_{\rm sa}\delta^{s}\eta_{k+1}{\bf d}_{k,k+1}.
\end{equation*}
Use of quasi-monotonicity property for $\eta_{k+1}$ results in
\begin{equation}
\label{csa}
    ({\bf d}_{k,k+1}^2+e_{k+1}^2-e_{k}^2)\leq C_{\rm sa}(\mu_{k}(\T_k\setminus\T_{k+1})+\delta^{s}{\bf d}_{k,k+1}+\delta^{s}\eta_{k})e_{k+1}+C_{\rm sa}\delta^{s}\Lambda_{\rm mon}\eta_{k}{\bf d}_{k,k+1}.
\end{equation}
Apply Young's inequality in \eqref{csa} for $a=C_{\rm sa}\mu_k(\T_k\setminus\T_{k+1}), b=e_{k+1}$ for $\epsilon=2^{-3}\varepsilon C_{\rm drl}^{-2}$, $a=C_{\rm sa}\delta^{s}{\bf d}_{k,k+1}, b=e_{k+1}$ for $\epsilon=2^{2}C_{\rm sa}^2\delta^{2s}$, $a=C_{\rm sa}\delta^{s}\eta_k, b=e_{k+1}$ for $\epsilon=2C_{\rm sa}^2\delta^{s}$ and $a=C_{\rm sa}\Lambda_{\rm mon}\delta^{s}{\bf d}_{k,k+1}$, $b=\eta_{k}$ for $\epsilon=2^2C_{\rm sa}^2\Lambda_{\rm mon}^2\delta^{2s}$ gives
\begin{align}
    ({\bf d}_{k,k+1}^2+e_{k+1}^2-e_{k}^2)\leq &2^2\varepsilon^{-1}C_{\rm drl}^{2}C_{\rm sa}^2\mu_{k}^2(\T_{k}\setminus\T_{k+1})+2^{-4}\varepsilon C_{\rm drl}^{-2}e_{k+1}^2+2^{-3}{\bf d}_{k,k+1}^2+2C_{\rm sa}^2\delta^{2s}e_{k+1}^2\nonumber\\
    &+2^{-2}\delta^{s}\eta_k^2+C_{\rm sa}^2\delta^{s}e_{k+1}^2+2^{-3}{\bf d}_{k,k+1}^2+2C_{\rm sa}^2\Lambda_{\rm mon}^2\delta^{2s}\eta_k^2. 
\end{align}
Some re-arrangement of terms yields
\begin{equation*}
    \frac{3}{4}{\bf d}_{k,k+1}^2+e_{k+1}^2-e_{k}^2\leq 2^2\varepsilon^{-1}\Lambda_6^2\mu_{k}^2(\T_{k}\setminus\T_{k+1})+(2^{-4}\varepsilon C_{\rm drl}^{-2}+2C_{\rm sa}^2\delta^{2s}+C_{\rm sa}^2\delta^{s})e_{k+1}^2+(2^{-2}\delta^{s}+2C_{\rm sa}^2\Lambda_{\rm mon}^2\delta^{2s})\eta_{k}^2.
\end{equation*}
Using {$2C_{\rm sa}^2\delta^{2s}+C_{\rm sa}^2\delta^{s}\leq 2^{-4}\varepsilon C_{\rm drl}^{-2}$} and {$2^{-2}\delta^{s}+2C_{\rm sa}^2\Lambda_{\rm mon}^2\delta^{2s}\leq 2^{-4}\varepsilon$} from \eqref{constants2} we can observe 
\begin{equation*}
    \frac{3}{4}{\bf d}_{k,k+1}^2+e_{k+1}^2-e_{k}^2\leq 2^2\varepsilon^{-1}\Lambda_6^2\mu_{k}^2(\T_{k}\setminus\T_{k+1})+2^{-3}\varepsilon C_{\rm drl}^{-2}e_{k+1}^2+2^{-4}\varepsilon\eta_k^2.
\end{equation*}
The estimate for the term $\mu_k^2(\T_k\setminus\T_{k+1})$ follows from \eqref{5.19}. Note that the above estimate mirrors the equation \eqref{k+1}, with constants defined as in \eqref{constants2}. Hence, by proceeding with similar calculations as outlined in Section \ref{proof of a4v} following \eqref{k+1}, desired result is obtained as
    \begin{equation*}
\sum_{k=l}^{l+m}{\bf d}_{k,k+1}^2\leq\Lambda_{4(\varepsilon)}\eta_{l}^2+\varepsilon \sum_{k=l}^{l+m}\eta_{k}^2
\end{equation*}
with $\Lambda_{4(\varepsilon)}:=4+2^{6}\varepsilon^{-1}\Lambda_{6}^2.$
\end{proof}
\section{Numerical Experiments}
\label{numerical experiments}
\noindent The primary aim of this section is to validate the theoretical results, including the a priori estimates stated in Theorems \ref{Theorem-4.1} and \ref{theorem 4.2}, and to assess the efficiency and reliability of the estimator introduced in Section \ref{adaptive convergence}, thereby confirming Theorems \ref{theorem 4.3} and \ref{theorem 4.4}.

 The projected gradient method, together with the standard adaptive algorithm presented in \cite[Algorithm 1]{MR4766712}, is applied to solve the variational control formulation. In contrast, the discretised control formulation is addressed using a primal-dual active set approach \cite[Section 2.12.4]{TF2010}.

\noindent Recall from Section \ref{main result}, the optimality system comprises of state equation \eqref{cstate}, adjoint equation \eqref{cadjoint}, and optimality condition mentioned in \eqref{cop}. The discrete control variable $\bar{\bf u}_h$ has representation in terms of $\bar{\bf p}_h$ as mentioned in \eqref{defuh}. Recall $(\bar{\bf y},\bar{\bf p})\in {\bf V}\times {\bf V}$ and $(\bar{r},\bar{s})\in Q\times Q$ are regarded optimal velocity and pressure variables respectively. Throughout the section total error comprises of state error, adjoint error, and control error defined as ${\rm TE}:=\|E_C\|+\enorm{E_S}_{\rm pw}+\enorm{E_A}_{\rm pw}$. Here notation follows from Table \ref{tab:error_notation_horizontal}. The convergence rate is evaluated with respect to the total number of degrees of freedom (NDOF). Since the velocity is discretised using lowest order Crouzeix–Raviart (CR) elements, its degrees of freedom are associated with the edges of the mesh. However, the pressure is approximated using piecewise constant elements. Consequently, the total NDOF is given by $2\times \#\E+\#\T$. 



\subsection{Convex domain example}
\label{convex domain}
The computational domain is defined as $\Omega = [0,1]^2$, with control constraints specified by the vectors $\mathbf{u}_a = (-200, -200)^T$ and $\mathbf{u}_b = (200, 200)^T$. The load function $\mathbf{f}$ and the desired state $\mathbf{y}_d$ are constructed to ensure that the exact solutions for the state and adjoint variables are :
\begin{equation*}
	\yb=\pb=[\sin^2(\pi x)\sin(\pi y)\cos(\pi y), -\sin^2(\pi y)\sin(\pi x)\cos(\pi x)]^T\mbox{ and } \rb=\sbar=\sin(2\pi x)\sin(2\pi y). 
\end{equation*} The exact control is obtained using the formula $\ub=\Pi_{[\U_a,\U_b]}(\dfrac{-\pb}{\alpha})$, with the regularization parameter $\alpha$ set to $10^{-3}$.\\ 

\noindent This benchmark problem is adapted from the work presented in \cite{MR3924212}. 
The numerical experiments commence with an initial triangulation $\mathcal{T}_0$ consisting of 8 elements.  The initial control $\mathbf{u}_0$ is initialized as the average of the lower and upper control bounds, i.e., $\mathbf{u}_0 = (\mathbf{u}_a + \mathbf{u}_b)/2$. The Dörfler marking criterion is employed with a marking parameter $\theta = 0.25$ to guide the adaptive mesh refinement process.
\begin{table}[hbt!]
\footnotesize
	\begin{tabular}{|c||c|c|c|c|c|c|c|c|c|c|c|c|}
		\hline
		\multirow{2}{*}{NDOF} &
		\multicolumn{12}{|c|}{Error and order of convergence from variational approach} \\ 
		\cline{2-13} 
		& $\enorm{\bar{\y}-\bar{\y}_h}$ & {order} & $\|\bar{r}-\bar{r}_h\|$ & order & $\enorm{\bar{\bf p}-\bar{\bf p}_h}$ & order & $\|\bar{s}-\shbar\|$ & order & $\|\bar{\bf u}-\bar
        {\bf u}_h\|$ & order & TE & order \\[2pt] 
         \hline
		 604 & 0.1280 & - & 0.1356 & - & 1.1281 & - & 0.1355 & - & 0.0841 & - & 1.1263 & -  \\[2pt] 
		 1083 & 0.0901  & 0.60  & 0.1093 & 0.49  & 0.0902 & 0.59 & 0.1092 & 0.49 & 0.0725 & 0.85 & 0.6371 & 0.84 \\[2pt] 
		3454 & 0.0730   & 0.42  & 0.0692  & 0.49  & 0.0730 & 0.43 & 0.0692 & 0.92 & 0.0681 & 0.73 & 0.3774 & 0.87   \\[2pt] 
		5909 & 0.0500 & 0.56  & 0.0564  & 0.59  & 0.0500 & 0.56 & 0.0564 & 0.59 & 0.0424 & 0.86 & 0.2177 & 0.59 \\ [2pt] 
		10139 & 0.0390 & 0.46 & 0.0426 & 0.59 & 0.0390 & 0.46 & 0.0352 & 0.64 & 0.0321 & 0.75 & 0.1958 & 0.55   \\[2pt] 
		16382 & 0.0216 & 0.50 & 0.2147 & 0.55 & 0.0211 & 0.59 & 0.0248 & 0.51 & 0.0198 & 1.06 &  0.0595 & 0.51   \\
		\hline
	\end{tabular}
	\caption{Error and observed order of convergence for the velocity, pressure, control variable, and total error in the variational approach for example \ref{convex domain}.}
    \label{table 4}
\end{table}

\begin{figure}[hbt!]
	\centering
	\subcaptionbox{approximate velocity profiles of state equation.}{\includegraphics[width=0.3\linewidth]{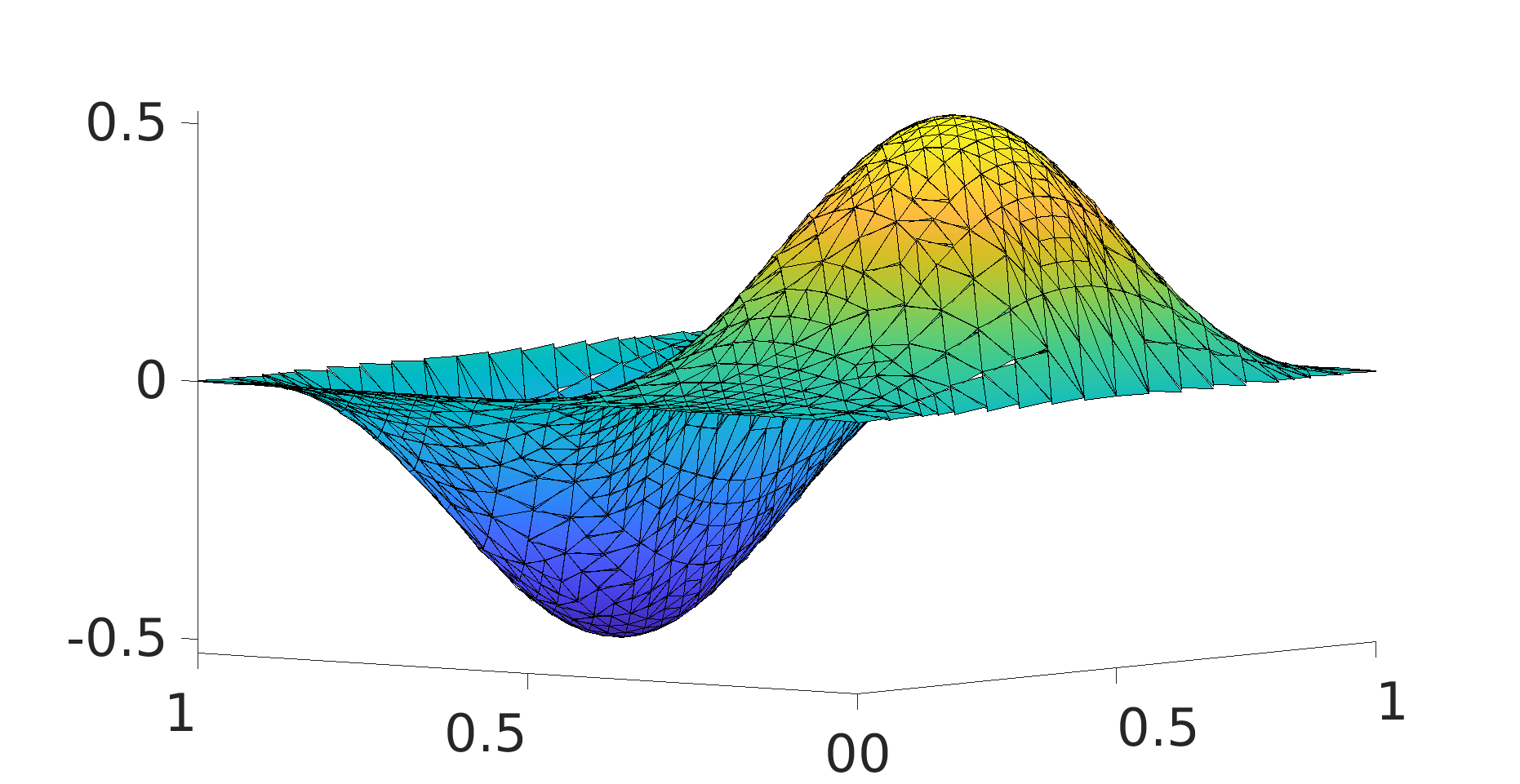}
		{\includegraphics[width=0.3\linewidth]{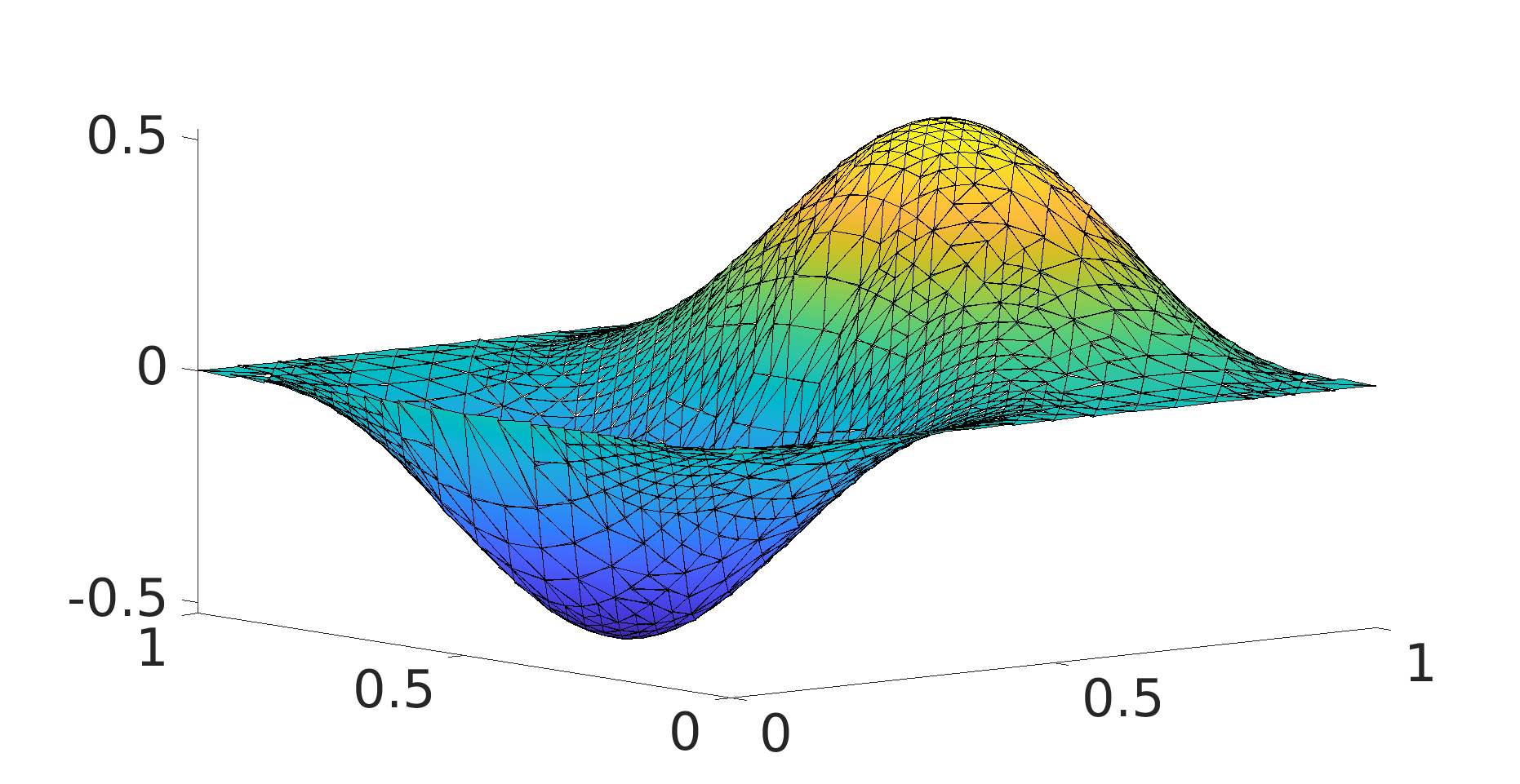}}}
	\subcaptionbox{pressure profile of state equation.}{\includegraphics[width=0.3\linewidth]{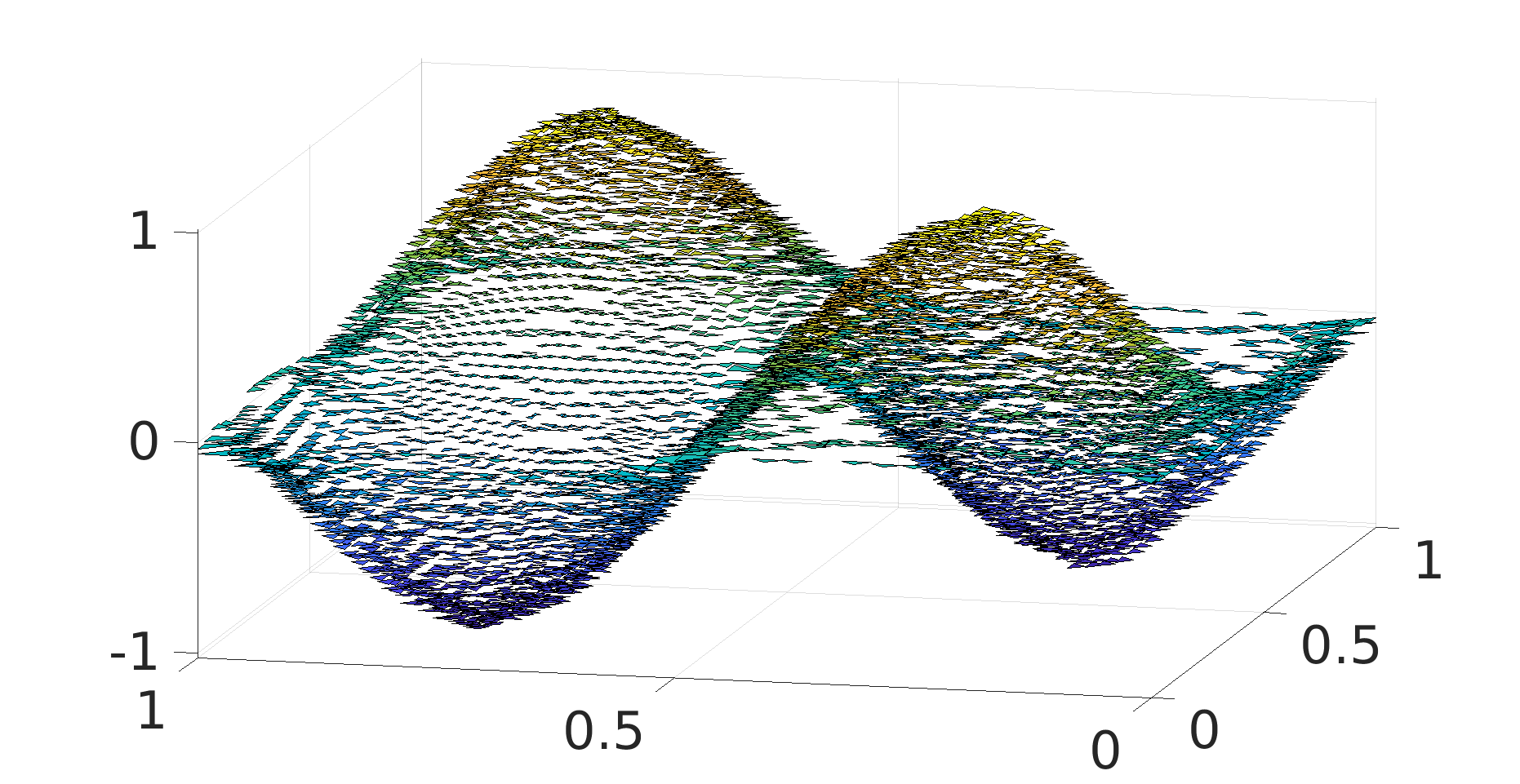}}
	\caption{Velocity and Pressure profiles of state equation with variational formulation.}
    \label{figure 1}
\end{figure}
\noindent Table \ref{table 4} presents the numerical errors and corresponding convergence orders for the velocity and pressure variables, along with the control and total error. The velocity variable demonstrates a convergence order of 0.5 ($h \approx \text{NDOF}^{-1/2}$) in the piecewise norm, while the pressure and control variables exhibit convergence orders of approximately 0.5 and 1 respectively, consistent with the theoretical result mentioned in Theorem \ref{Theorem-4.1}. Furthermore, the total error, as reported in Table \ref{table 4}, achieves the anticipated convergence rate of 0.5. A similar trend is observed in the discretised control approach, as evidenced by the results in Tables \ref{table 5}, where the velocity, pressure, control, and total errors also converge at the expected rate of 0.5. Table~\ref{table 7} presents the values of the state, adjoint, and control error estimators along with their rate of convergence obtained for the discretised approach. 
It can be observed that all estimators converge with the optimal rate of $0.5$, demonstrating the efficiency and reliability of the proposed estimator. A comparable behavior is also observed for the variational approach, although those results are not displayed here for brevity. \\
\noindent Figure \ref{figure 1} presents the velocity and pressure profiles corresponding to the state equation in variational formulation. Due to the close resemblance between the adjoint and state equation profiles, the adjoint profiles are omitted for brevity. Figures \ref{figure 2} and \ref{figure 3} display the control profiles obtained from the variational and discretised approaches, respectively. Recall, control variable is computed using the projection formula as $\uhb=\Pi_{[\U_a,\U_b]}(\frac{-\phb}{\alpha})$, where $\Pi_{[\mathbf{u}_a, \mathbf{u}_b]}$ denotes the projection onto the admissible control set defined by the bounds $\mathbf{u}_a$ and $\mathbf{u}_b$, and $\alpha$ is the regularization parameter. As observed from Figure \ref{figure 1}, $\bar{\bf p}_h$ varies between $-0.5$ and $0.5$. With the regularization parameter set to $\alpha = 10^{-3}$, the unprojected control variable $\bar{\mathbf{u}}$ would range between $-500$ and $500$. However, given the control bounds $\mathbf{u}_a = (-200, -200)^T$ and $\mathbf{u}_b = (200, 200)^T$, the projected control values are constrained within these limits. This projection effect is evident in Figures \ref{figure 2} and \ref{figure 3}, where the flattening of the control profile surfaces indicates the enforcement of the control constraints.
\begin{table}[hbt!]
\footnotesize
    \centering
	\begin{tabular}{|c||c|c|c|c|c|c|c|c|c|c|c|c|}
		\hline
		\multirow{2}{*}{NDOF} &
		\multicolumn{12}{|c|}{Error and order of convergence from discretised approach} \\ 
		\cline{2-13} 
		& $\enorm{\bar{\y}-\bar{\y}_h}$ & {order} & $\|\bar{r}-\bar{r}_h\|$ & order & $\enorm{\bar{\bf p}-\bar{\bf p}_h}$ & order & $\|\bar{s}-\shbar\|$ & order & $\|\bar{\bf u}-\bar{\bf u}_h\|$ & order & TE & order  \\[2pt] 
         \hline
		 72 & 0.9210 & - & 0.1694 & - & 0.9329 & - & 0.1707 & - & 0.0986 & - & 2.2928 & - \\[2pt] 
		 244 & 0.6928  & 0.41  & 0.3473 & 0.40  & 0.6931 & 0.42 & 0.1151 & 0.586 & 0.0825 & 0.58 & 2.1679 & 0.49 \\[2pt] 
		544 & 0.5602   & 0.74  & 0.2677  & 0.42  & 0.5603 & 0.75 & 0.0564 & 0.53 & 0.0436 & 0.52 & 0.9348 & 0.87 \\[2pt] 
		1056 & 0.2726 & 0.53  & 0.1126  & 0.52  & 0.3316 & 0.74 & 0.0277 & 0.51 & 0.0113 & 0.45 & 0.8264 & 0.84  \\ [2pt] 
		8320 & 0.1356 & 0.53 & 0.1253 & 0.49 & 0.2726 & 0.51 & 0.0305 & 0.41 & 0.0080 & 0.51 & 0.4608 & 0.44 \\[2pt] 
		16512 & 0.0677 & 0.42 & 0.0564 & 0.68 & 0.0809 & 0.41 & 0.0135 & 0.49 & 0.0042 & 0.49 & 0.2066 & 0.59 \\
		\hline
	\end{tabular}
	\caption{Error and observed order of convergence for the velocity, pressure, control variable, and total error in the discretised approach for example \ref{convex domain}.}
    \label{table 5}
\end{table}
\vspace{-0.5cm}
\begin{table}[hbt!]
\footnotesize
	\centering
	\begin{tabular}{|c||c|c|c|c|c|c|c|c|}
		\hline
		\multirow{2}{*}{NDOF} &
		\multicolumn{2}{|c|}{
			State estimator} &
		\multicolumn{2}{|c|}{Adjoint estimator} & 
		\multicolumn{2}{|c|}{Control estimator} & 
		\multicolumn{2}{|c|}{$\eta^2=\eta_{\rm st}^2+\eta_{\rm adj}^2+\eta_{C}^2$} \\ 
		\cline{2-9} 
		& $\eta_{\rm st}$ & {order} & $\eta_{\rm adj}$ & order & $\eta_{C}$ & order & $\eta$ & order  \\[2pt] 
         \hline
		72 & 9.5166 & - & 9.5576 & - & 0.7631 & - & 13.5092 & -  \\[2pt] 
		144 & 9.4090  & 0.57  & 9.5364 & 0.57  & 0.6445 & 0.57 & 11.3413 & 0.57   \\[2pt]
		544 & 6.5369   & 0.59  & 6.5392  & 0.59  & 0.4046 & 0.51 & 9.2551 & 0.59 \\[2pt] 
		1056 & 3.1035 & 0.50  & 4.3380  & 0.50  & 0.2717 & 0.48 & 4.3931 & 0.50   \\ [2pt]
		8320 & 1.1568 & 0.45 & 2.2685 & 0.45 & 0.1381 & 0.50 & 2.2474 & 0.45  \\[2pt] 
		16512 & 0.7998 & 0.52 & 1.1569 & 0.52 & 0.0971 & 0.49 & 1.2374 & 0.52  \\[2pt] 
		\hline 
	\end{tabular}
	\caption{Order of convergence of state, adjoint, and complete estimator with $\theta=0.25$ of example \ref{convex domain}.}
    \label{table 7}
\end{table}

\begin{figure}[hbt!]
	\centering
	\includegraphics[width=0.45\linewidth]{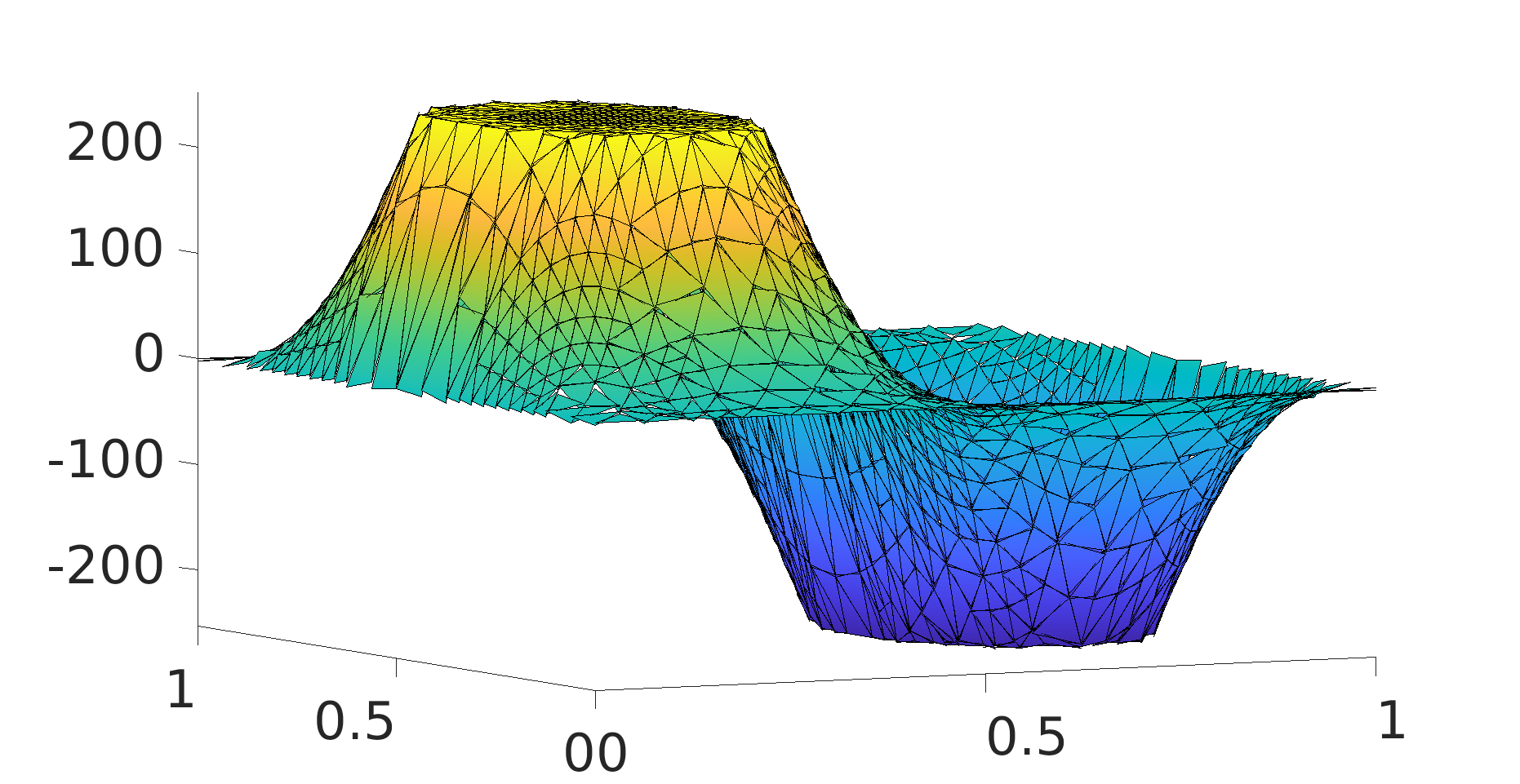}
	\includegraphics[width=0.45\linewidth]{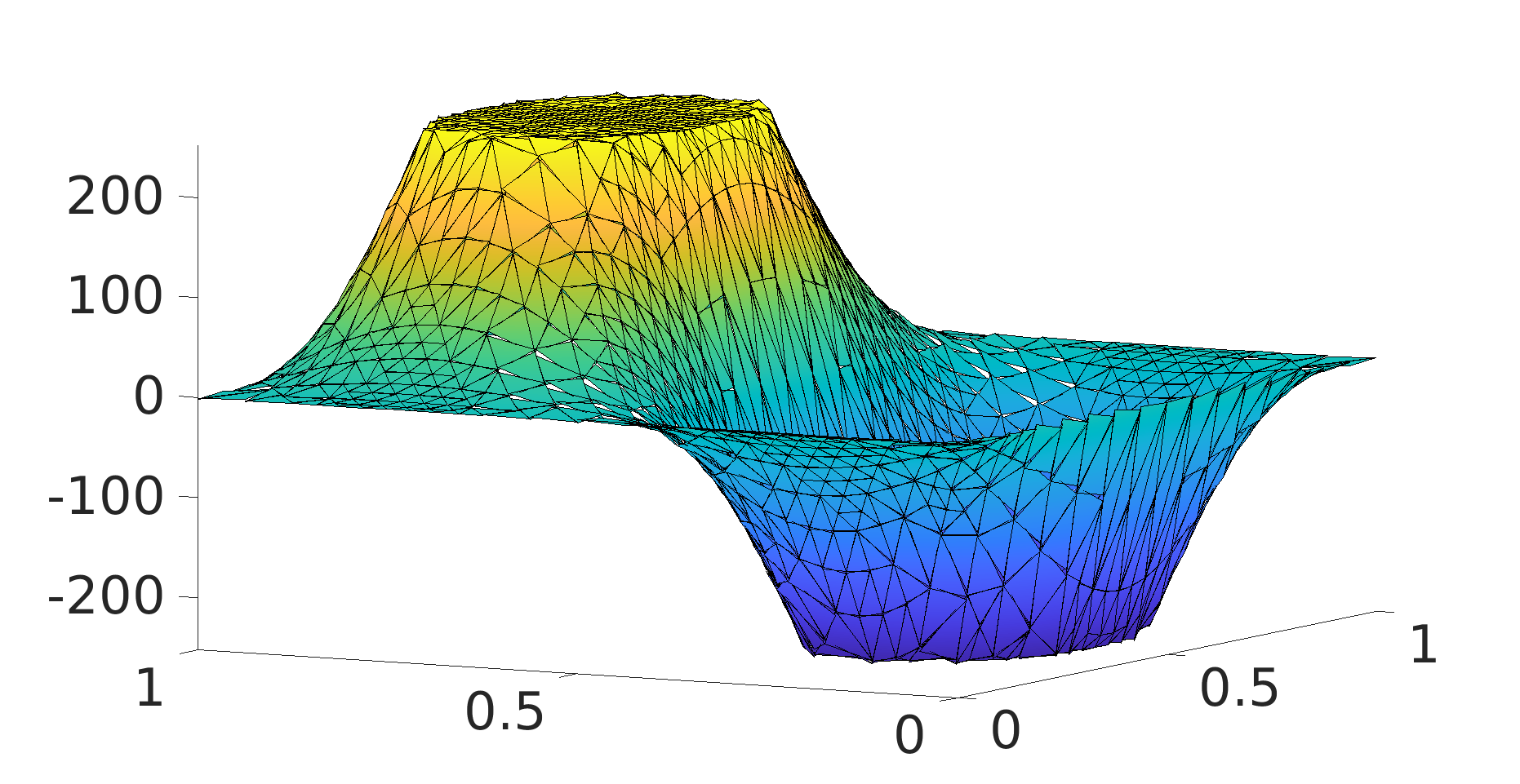}
	\caption{Variational control profiles for example \ref{convex domain}.}
    \label{figure 2}
\end{figure}

\begin{figure}[hbt!]
	\centering
	\includegraphics[width=0.45\linewidth]{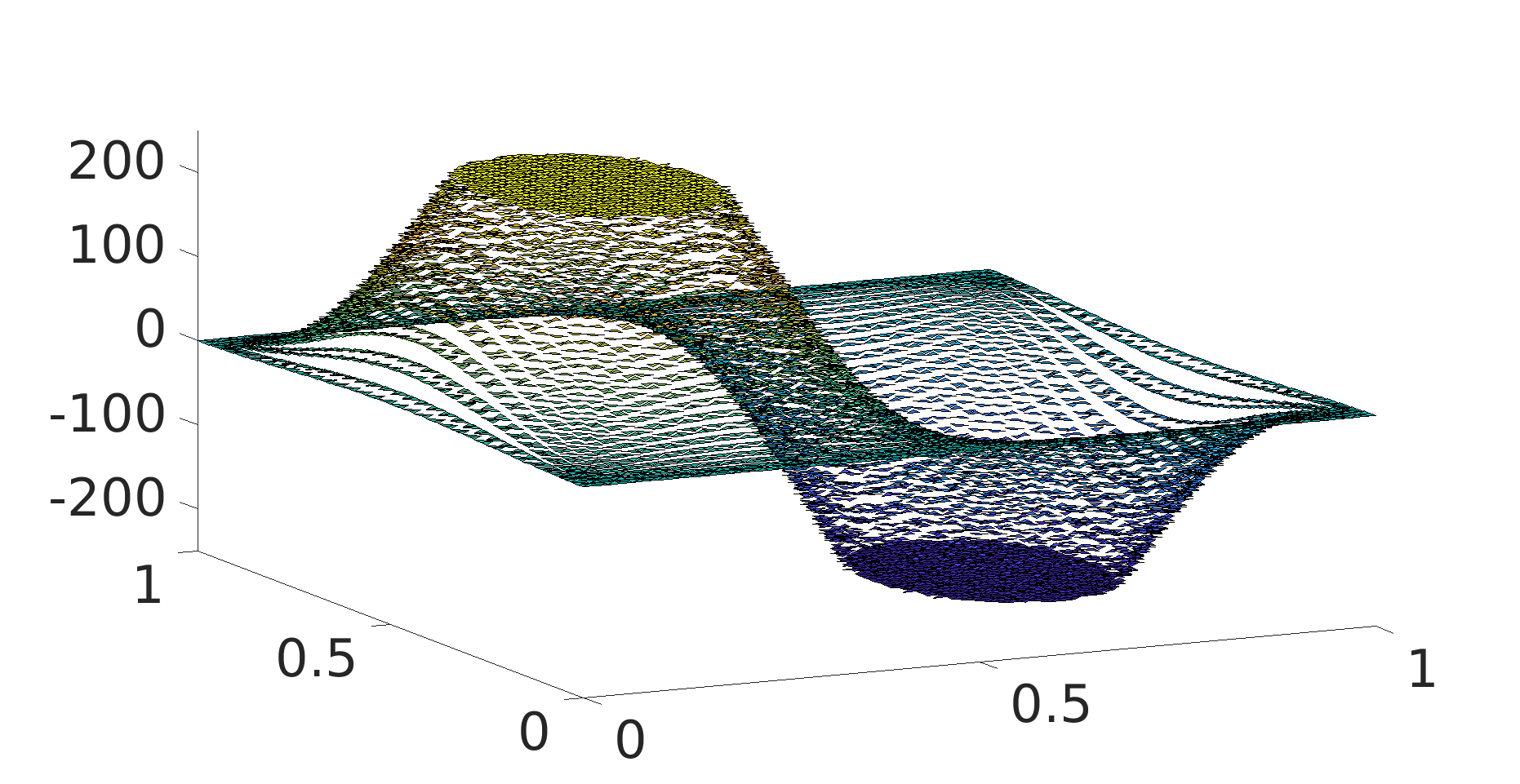}
	\includegraphics[width=0.45\linewidth]{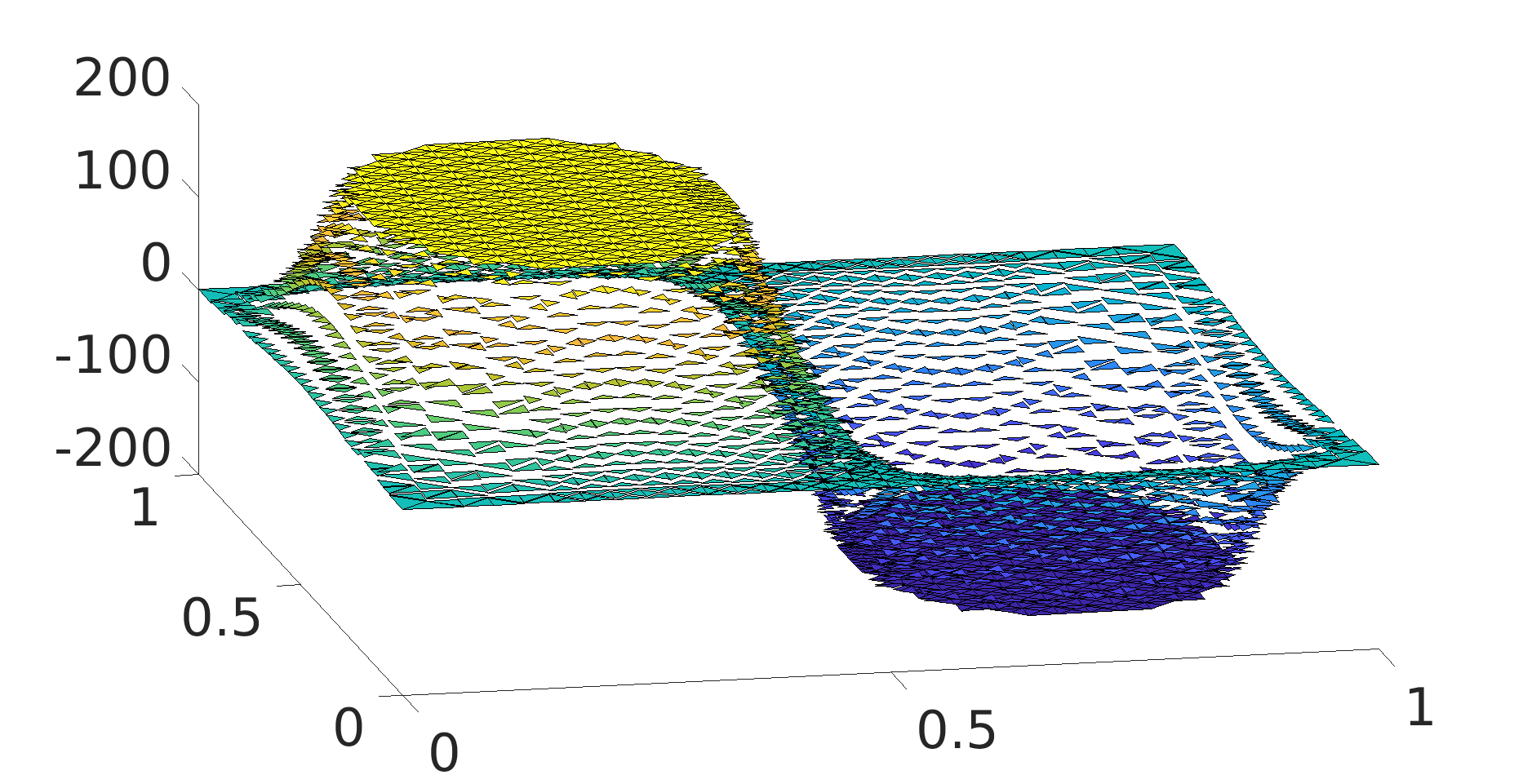}
	\caption{Discretised control profiles for example \ref{convex domain}.}
    \label{figure 3}
\end{figure}

\subsection{Non convex domain example}
\label{nonconvex domain}
Consider the L-shaped domain $\Om=(0,1)^2\setminus [0,1]\times [-1,0]$. Let $(r,\theta)$ be the polar co-ordinates and set $\omega=\frac{3\pi}{2}$, $\U_a=(-200,-200)^{T}$, and $\U_b=(-50,-50)^{T}$. Define the functions
\begin{align*}
	&\phi_1(\theta)=-\sin(\lambda\theta)\cos(\omega)-\lambda\sin(\theta)\cos(\lambda(\omega-\theta)+\theta)+\lambda\sin(\omega-\theta)\cos(\lambda\theta-\theta)+\sin(\lambda(\omega-\theta)),\\
	&\phi_2(\theta)=-\sin(\lambda\theta)\sin(\omega)-\lambda\sin(\theta)\sin(\lambda(\omega-\theta)+\theta)-\lambda\sin(\omega-\theta)\sin(\lambda\theta-\theta),\\
	&\phi_3(\theta)=2\lambda(\sin((\lambda-1)\theta+\omega)+\sin((\lambda-1)\theta-\lambda\omega)).
\end{align*}
Construct the load function $\textbf{f}$ and desired state $\y_d$ such that, the exact solution reads
\begin{equation}
\label{solution}
	\bar{\y}=\bar{\pee}=[r^{\lambda}\phi_1(\theta), r^{\lambda}\phi_2(\theta)]^{T}\mbox{ and } \bar{r}=-\bar{s}=r^{\lambda-1} \phi_3(\theta) 
\end{equation}
with $\lambda=0.5445$ \cite{YWN2022}. The exact control is obtained by using the formula $\bar{\U}=\Pi_{[\U_a,\U_b]}\left(\frac{-\bar{\pee}}{\alpha}\right)$ and regularization parameter $\alpha=10^{-3}$. The numerical experiments begins with an initial triangulation $\mathcal{T}_0$ consisting of 6 elements. The Dörfler marking criterion is used with a marking parameter $\theta = 0.25$ to guide the adaptive mesh refinement process. The initial control $\mathbf{u}_0$ is initialized as the average of the lower and upper control bounds, i.e., $\mathbf{u}_0 = (\mathbf{u}_a + \mathbf{u}_b)/2$.
\begin{figure}[hbt!]
	\centering
	\includegraphics[width=8cm,height=8cm]
	{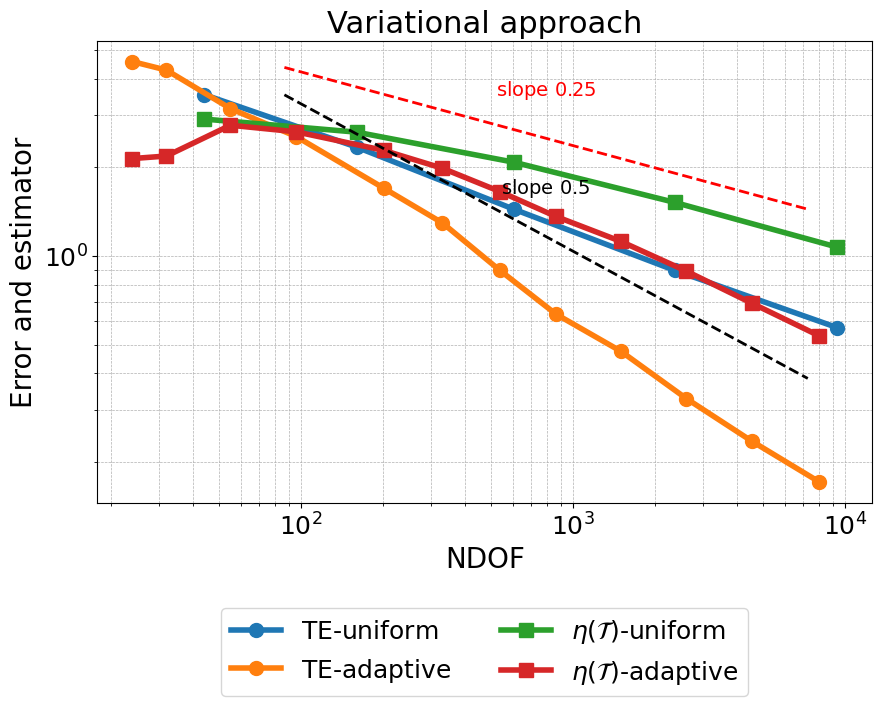}
	\includegraphics[width=8cm,height=8cm]
	{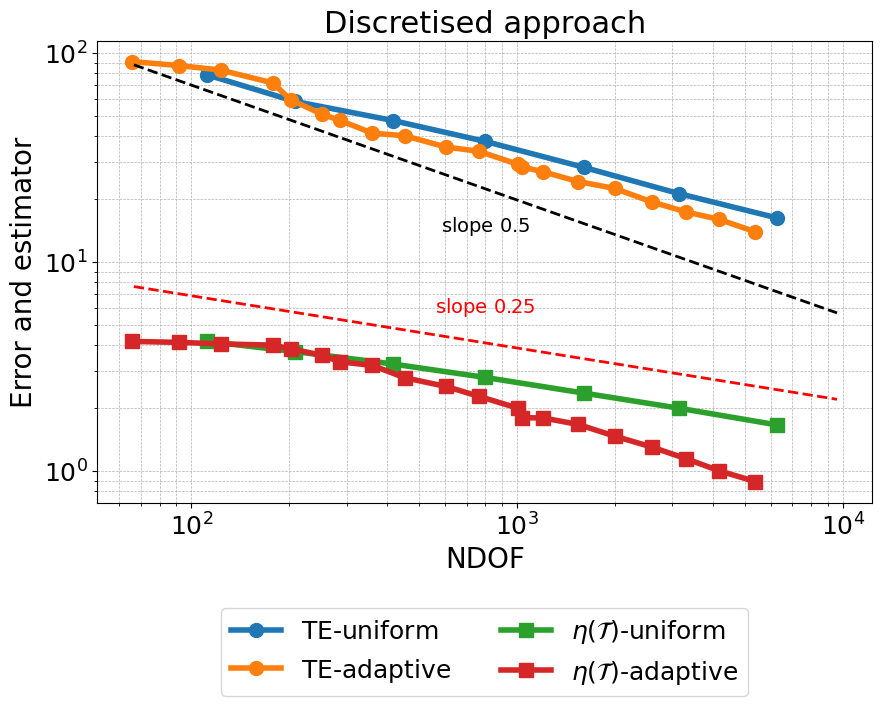}
	\caption{Error plots with uniform and adaptive refinement for total error and complete estimator for example \ref{nonconvex domain}.}
     \label{figure 4}
\end{figure}

The standard elliptic regularity theory implies that the exact solution defined in \eqref{solution} possesses a regularity index of order $1+s$ for all $s<\lambda=0.5445$ in the case of $(\bar{\mathbf y}, \bar{\mathbf p})$, and of order $s<\lambda=0.5445$ for $\bar r$ and $\bar s$. 

Consequently, the total error 
\[
{\rm TE}:=\|E_{\rm C}\|+\enorm{E_{\rm S}}+\enorm{E_{\rm A}}
\]
exhibits a suboptimal convergence rate of order $s$, as predicted by Theorem~\ref{Theorem-4.1} for the variational formulation and Theorem~\ref{theorem 4.2} for the discretised formulation.

In particular, under uniform mesh refinement, one expects a suboptimal rate of order $0.25$ with respect to the number of degrees of freedom (since $h^{0.5}\approx {\rm NDOF}^{-0.25}$). In contrast, adaptive mesh refinement improves this behaviour and recovers the optimal convergence rate of order $0.5$ (i.e., $h\approx {\rm NDOF}^{-1/2}$) for both the variational and discretised formulations as shown in Figure \ref{figure 4}. This confirms the a priori estimates established in Theorem~\ref{Theorem-4.1} for the variational case and Theorem~\ref{theorem 4.2} for the discretised case.

A similar improvement is observed for the complete estimator. In the variational formulation,
\[
\eta^2(\mathcal T)=\eta_{\rm st}^2(\mathcal T)+\eta_{\rm adj}^2(\mathcal T),
\]
while in the discretised formulation,
\[
\eta^2(\mathcal T)=\eta_{\rm C}^2(\mathcal T)+\eta_{\rm st}^2(\mathcal T)+\eta_{\rm adj}^2(\mathcal T).
\]
In both approaches, the estimator reflects the same convergence behaviour as the total error, thereby demonstrating its reliability and efficiency in accordance with Theorems~\ref{theorem 4.3} and~\ref{theorem 4.4} for the variational and discretised cases, respectively.

\begin{figure}[hbt!]
	\centering
	\subcaptionbox{Velocity profiles of state equation}{\includegraphics[width=0.3\linewidth]{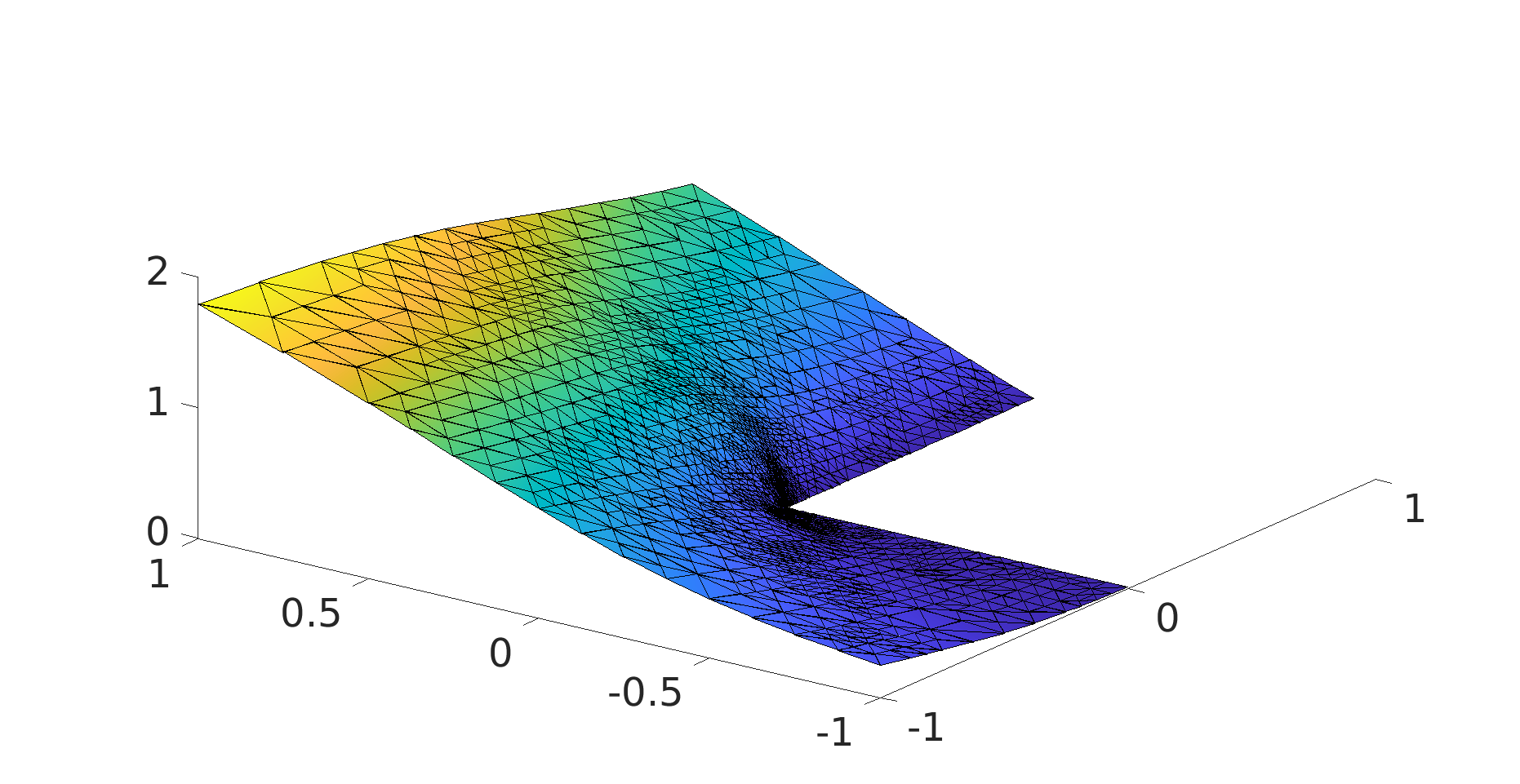}
		{\includegraphics[width=0.3\linewidth]{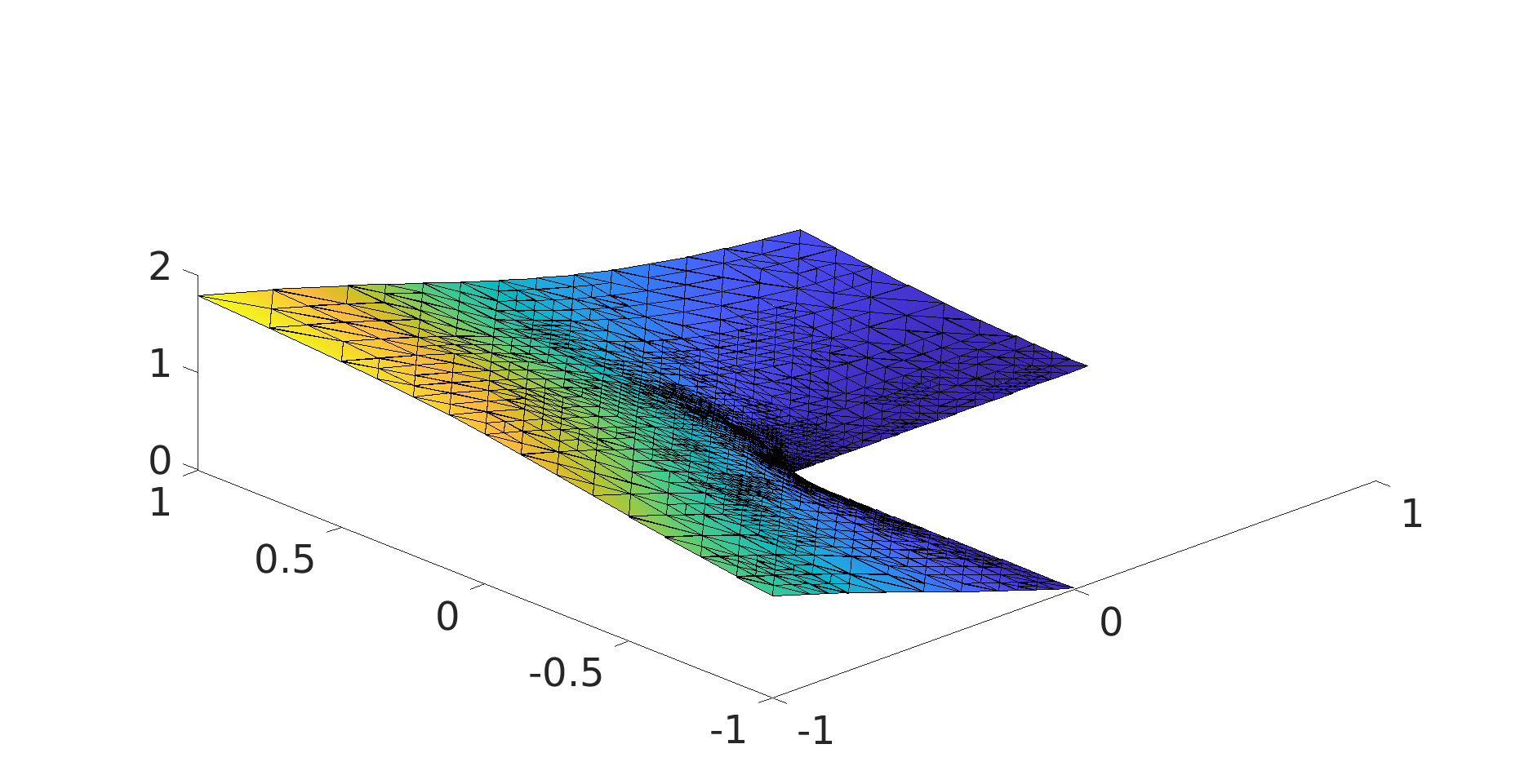}}}
	\subcaptionbox{Pressure profile of state equation}{\includegraphics[width=0.3\linewidth]{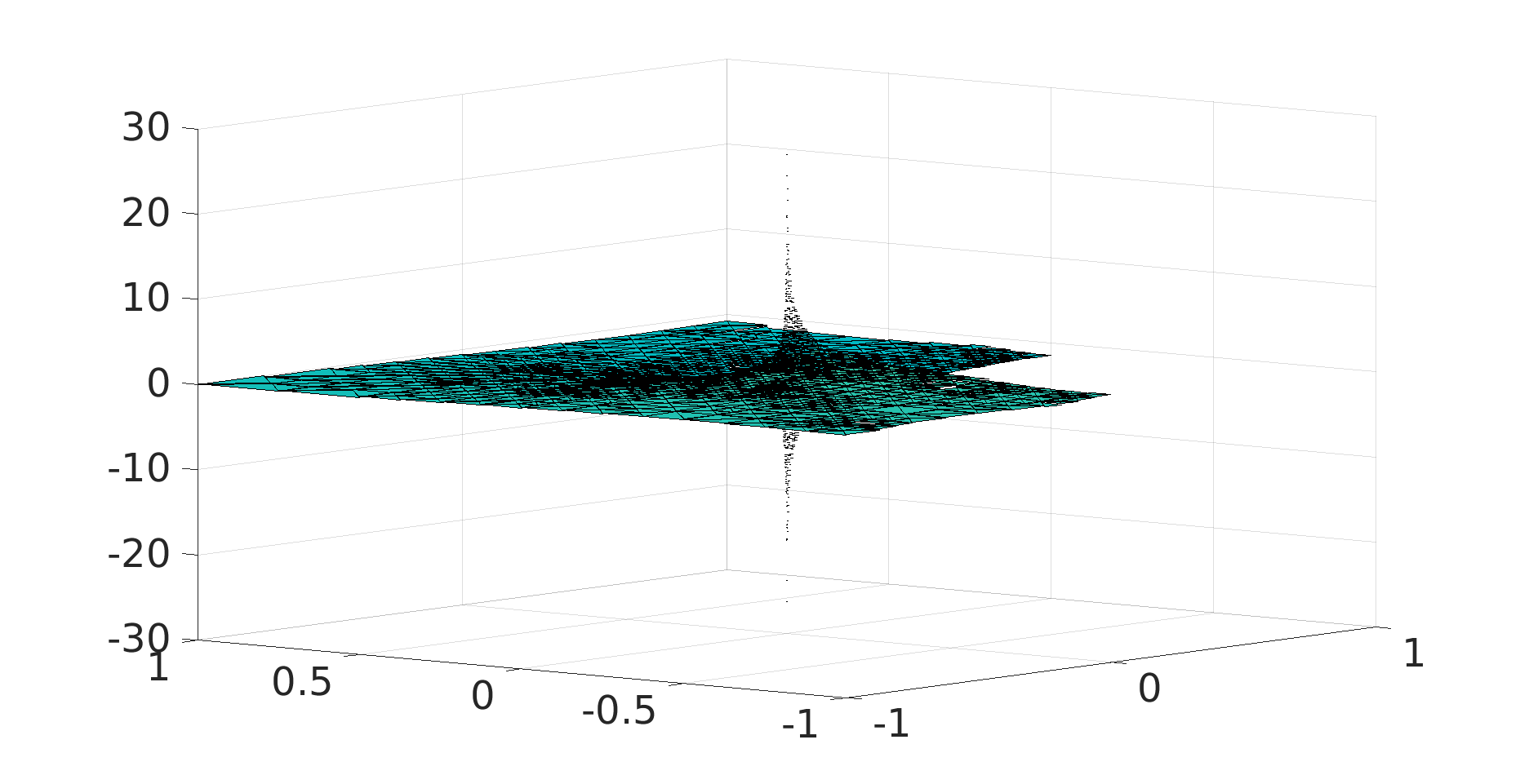}}
	\caption{Velocity and pressure profiles of state equation in variational approach for example \ref{nonconvex domain}.}
    \label{figure 6}
\end{figure}


\begin{figure}[hbt!]
	\centering	\includegraphics[width=0.45\linewidth]{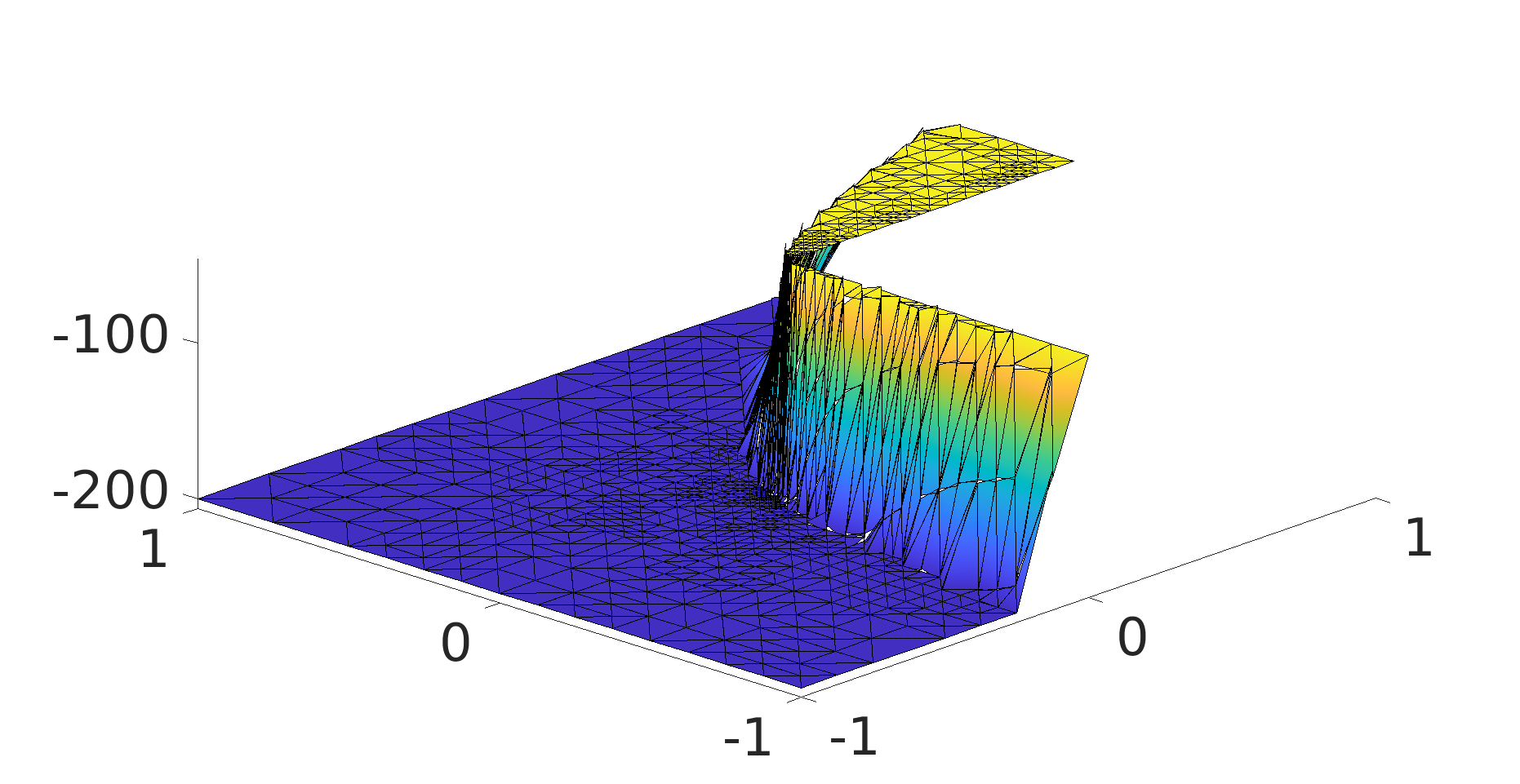}
	\includegraphics[width=0.45\linewidth]{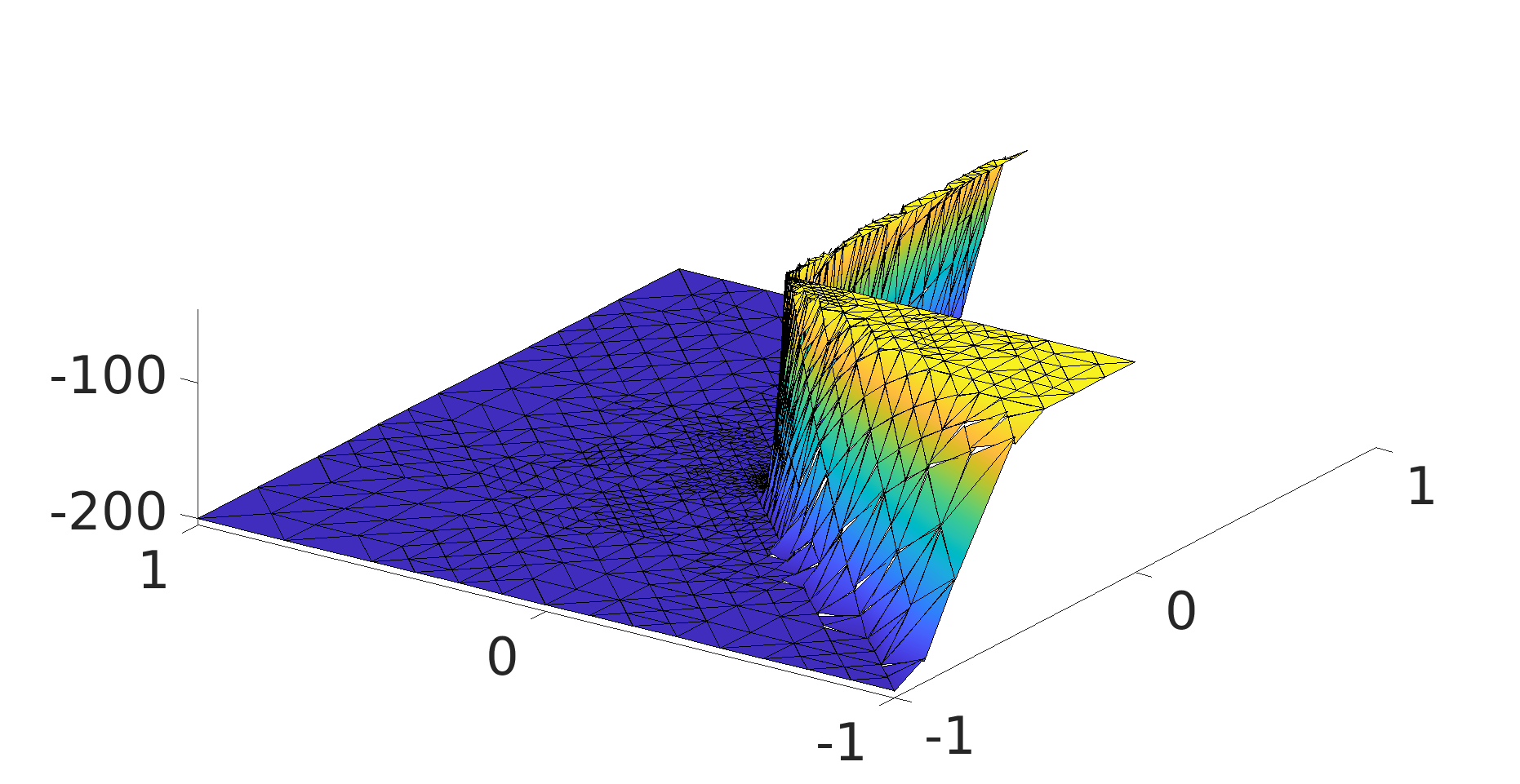}
	\caption{Variational control profiles for example \ref{nonconvex domain}.}
    \label{figure 7}
\end{figure}

\noindent Recall from \eqref{defuh}, the approximate control is computed using the formula $\bar{\U}_h(x)=\Pi_{[\U_a,\U_b]}\left(-\frac{1}{\alpha}\bar{\pee}_h(x)\right)$ for variational approach and $\bar{\U}_h(x)=\Pi_{[\U_a,\U_b]}\Pi_{0}\left(-\frac{1}{\alpha}\bar{\pee}_h(x)\right)$ for discretised approach. Figure \ref{figure 6} shows that the velocity variable ranges from 0 to 2. Given the selected value of $\alpha=10^{-3}$ the resulting control variable lies in the interval [-2000,0]. With the specified bounds $\U_a=(-200,-200)^{T}$, and $\U_b=(-50,-50)^{T}$, the projection of the control variable is confined to this range, causing the control surface to flatten near these limits. This behavior is clearly visible in Figures \ref{figure 7} and \ref{figure 8} for both the variational and discretised control formulations, respectively.

\begin{figure}[hbt!]
	\centering
	\includegraphics[width=0.45\linewidth]{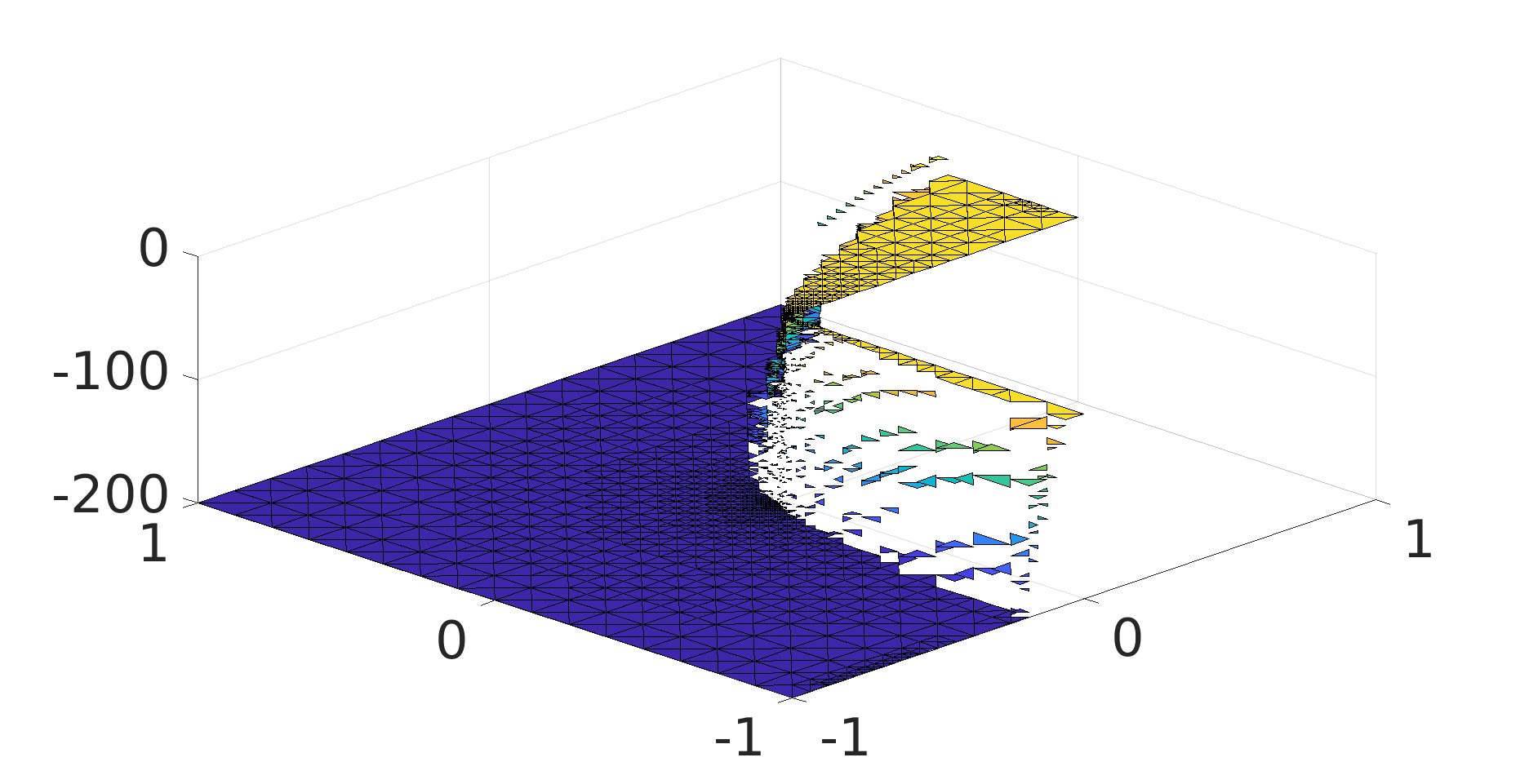}
	\includegraphics[width=0.45\linewidth]{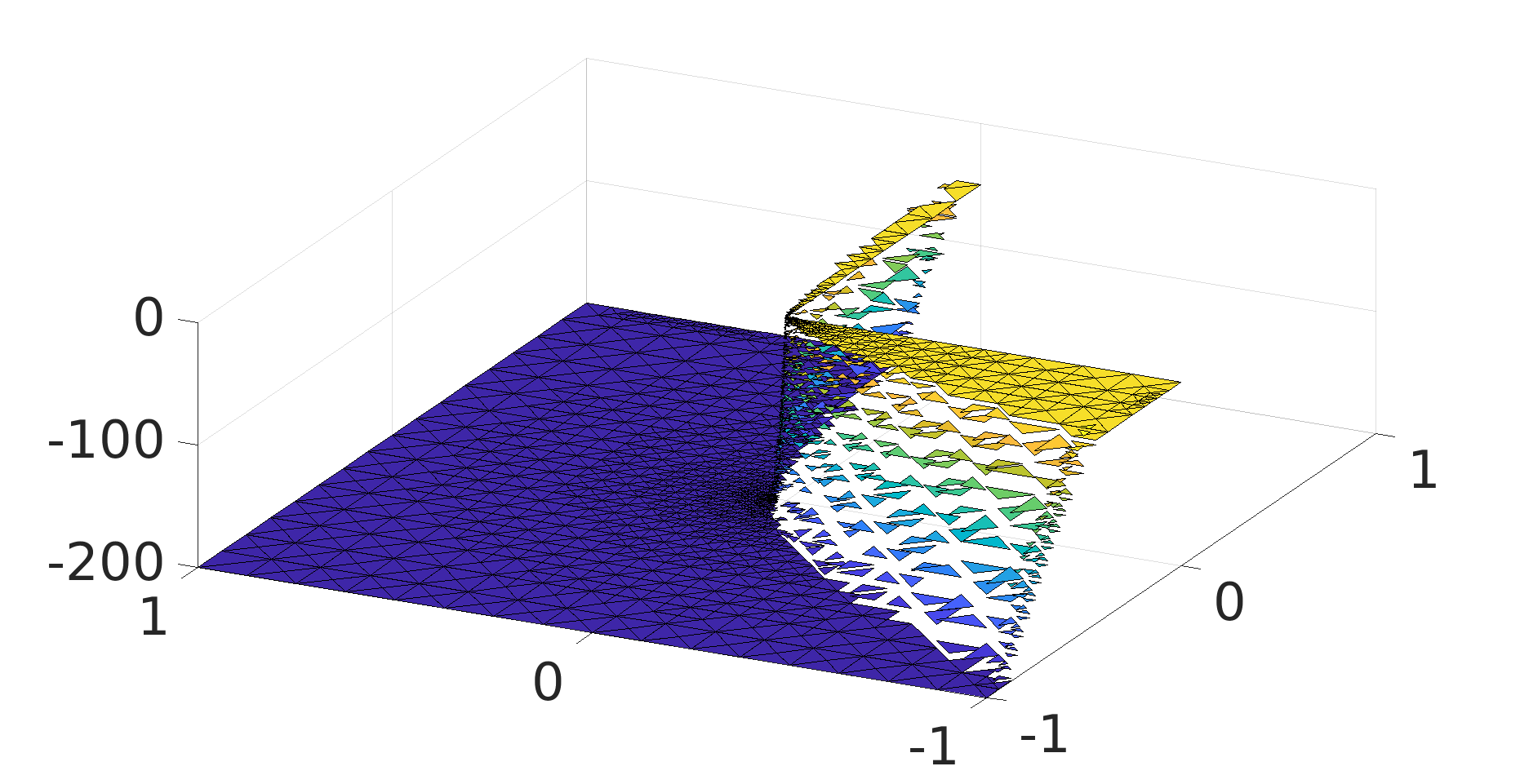}
	\caption{Discrete control profiles for Example \ref{nonconvex domain}}
    \label{figure 8}
\end{figure}


\section{Conclusion}
\label{conclusions}
This work investigates the convergence and quasi-optimality of adaptive finite element methods for an optimal control problem governed by the Stokes equations. The analysis is carried out within an axiomatic framework for both variational and discretised control formulations.

A key contribution of this study is the establishment of error equivalence results through the introduction of suitable auxiliary variables at both the continuous and discrete levels. In the discretised control formulation, the control variable is approximated using piecewise constant functions. Since the discrete control does not belong to the admissible control set ${\bf U}_{\mathrm{ad}}$, an auxiliary control variable $\widetilde{\mathbf{u}}$ is introduced (see~\eqref{2.21}). The error between $\widetilde{\mathbf{u}}$ and $\bar{\mathbf{u}}_h$ naturally leads to an additional control estimator $\eta_C(\mathcal{T})$, which is shown to be both reliable and efficient.

Furthermore, besides the control estimator proposed in this work, two alternative choices may also be considered. The first option is to directly include the term $\|\widetilde{\mathbf{u}}-\bar{\mathbf{u}}_h\|$ in the estimator, since $\widetilde{\mathbf{u}}$ is computable at the discrete level. However, as this choice does not explicitly involve the mesh-size parameter, establishing the estimator reduction property {\bf (A2)} becomes technically challenging. Another possible approach is to define the control estimator only over the inactive set $\mathcal{T}_{\mathrm{in}}:=\mathcal{T}\setminus\mathcal{T}_{\mathrm{a}}$, where $\mathcal{T}_{\mathrm{a}}$ denotes the active region. Nevertheless, due to the difficulty in relating the inactive sets across successive meshes, verifying the axioms {\bf (A1)} and {\bf (A2)} in this setting is nontrivial. Similar challenges are well known in the analysis of obstacle problems, particularly in the treatment of edge estimators on noncontact regions \cite[Remark 3.1]{MR3365763}. To overcome these issues, the control estimator in this work is defined over the entire domain $\Omega$.
Numerical experiments in the convex and nonconvex domains demonstrate the effectiveness of the resulting complete estimator.
\section*{Conflict of interest}
The authors declared that they have no conflict of interest.
\section{Acknowledgments}
\label{acknowledgements}
\noindent Asha K. Dond acknowledges the financial support of ANRF under Grant No. ANRF/ARG/009285. The authors express their gratitude to Mr. Subham Nayak for his valuable contributions and insights during the discussions.

\bibliographystyle{abbrv} 
\bibliography{vKeBib}

\begin{thebibliography}{10}

\bibitem{BRMS11}
R.~Becker and S.~Mao.
\newblock Quasi-optimality of an adaptive finite element method for an optimal
  control problem.
\newblock {\em Comput. Methods Appl. Math.}, 11(2):107--128, 2011.

\bibitem{Braess}
D.~Braess.
\newblock {\em Finite Elements, Theory, Fast Solvers, and Applications in
  Elasticity Theory}.
\newblock Cambridge, 3rd edition, 2007.

\bibitem{Brenner}
S.~C. Brenner and L.~R. Scott.
\newblock {\em The Mathematical Theory of Finite Element Methods}.
\newblock Springer, 3rd edition, 2007.

\bibitem{BPCCSG18}
P.~Bringmann, C.~Carstensen, and G.~Starke.
\newblock An adaptive least-squares {FEM} for linear elasticity with optimal
  convergence rates.
\newblock {\em SIAM J. Numer. Anal.}, 56(1):428--447, 2018.

\bibitem{MR3757107}
P.~Bringmann, C.~Carstensen, and G.~Starke.
\newblock An adaptive least-squares {FEM} for linear elasticity with optimal
  convergence rates.
\newblock {\em SIAM J. Numer. Anal.}, 56(1):428--447, 2018.

\bibitem{CCDARH19}
C.~Carstensen, A.~K. Dond, and H.~Rabus.
\newblock Quasi-optimality of adaptive mixed {FEM}s for non-selfadjoint
  indefinite second-order linear elliptic problems.
\newblock {\em Comput. Methods Appl. Math.}, 19(2):233--250, 2019.

\bibitem{MR3935887}
C.~Carstensen, A.~K. Dond, and H.~Rabus.
\newblock Quasi-optimality of adaptive mixed {FEM}s for non-selfadjoint
  indefinite second-order linear elliptic problems.
\newblock {\em Comput. Methods Appl. Math.}, 19(2):233--250, 2019.

\bibitem{CMRC12}
C.~Carstensen, M.~Eigel, R.~H.~W. Hoppe, and C.~L\"{o}bhard.
\newblock A review of unified a posteriori finite element error control.
\newblock {\em Numer. Math. Theory Methods Appl.}, 5(4):509--558, 2012.

\bibitem{CMMD2014}
C.~Carstensen, M.~Feischl, M.~Page, and D.~Praetorius.
\newblock Axioms of adaptivity.
\newblock {\em Comput. Math. Appl.}, 67(6):1195--1253, 2014.

\bibitem{CGS2015}
C.~Carstensen, D.~Gallistl, and M.~Schedensack.
\newblock Adaptive nonconforming {C}rouzeix-{R}aviart {FEM} for eigenvalue
  problems.
\newblock {\em Math. Comp.}, 84(293):1061--1087, 2015.

\bibitem{MR3790080}
C.~Carstensen and F.~Hellwig.
\newblock Optimal convergence rates for adaptive lowest-order discontinuous
  {P}etrov-{G}alerkin schemes.
\newblock {\em SIAM J. Numer. Anal.}, 56(2):1091--1111, 2018.

\bibitem{CCHR2006}
C.~Carstensen and R.~H.~W. Hoppe.
\newblock Convergence analysis of an adaptive nonconforming finite element
  method.
\newblock {\em Numer. Math.}, 103(2):251--266, 2006.

\bibitem{MR3365763}
C.~Carstensen and J.~Hu.
\newblock An optimal adaptive finite element method for an obstacle problem.
\newblock {\em Comput. Methods Appl. Math.}, 15(3):259--277, 2015.

\bibitem{MR3349689}
C.~Carstensen, K.~K\"ohler, D.~Peterseim, and M.~Schedensack.
\newblock Comparison results for the {S}tokes equations.
\newblock {\em Appl. Numer. Math.}, 95:118--129, 2015.

\bibitem{CCPDR13}
C.~Carstensen, D.~Peterseim, and H.~Rabus.
\newblock Optimal adaptive nonconforming {FEM} for the {S}tokes problem.
\newblock {\em Numer. Math.}, 123(2):291--308, 2013.

\bibitem{CCSP20}
C.~Carstensen and S.~Puttkammer.
\newblock How to prove the discrete reliability for nonconforming finite
  element methods.
\newblock {\em J. Comput. Math.}, 38(1):142--175, 2020.

\bibitem{CCRH17}
C.~Carstensen and H.~Rabus.
\newblock Axioms of adaptivity with separate marking for data resolution.
\newblock {\em SIAM J. Numer. Anal.}, 55(6):2644--2665, 2017.

\bibitem{DEDR95}
E.~Dari, R.~Dur\'{a}n, and C.~Padra.
\newblock Error estimators for nonconforming finite element approximations of
  the {S}tokes problem.
\newblock {\em Math. Comp.}, 64(211):1017--1033, 1995.

\bibitem{DAGTSR19}
A.~K. Dond, T.~Gudi, and R.~C. Sau.
\newblock An error analysis of discontinuous finite element methods for the
  optimal control problems governed by {S}tokes equation.
\newblock {\em Numer. Funct. Anal. Optim.}, 40(4):421--460, 2019.

\bibitem{MR3924212}
A.~K. Dond, T.~Gudi, and R.~C. Sau.
\newblock An error analysis of discontinuous finite element methods for the
  optimal control problems governed by {S}tokes equation.
\newblock {\em Numer. Funct. Anal. Optim.}, 40(4), 2019.

\bibitem{MR4766712}
A.~K. Dond, N.~Nataraj, and S.~Nayak.
\newblock Convergence of adaptive {C}rouzeix-{R}aviart and {M}orley {FEM} for
  distributed optimal control problems.
\newblock {\em Comput. Methods Appl. Math.}, 24(3):599--622, 2024.

\bibitem{ErnJLU_2004}
A.~Ern and J.-L. Guermond.
\newblock {\em Theory and practice of finite elements}, volume 159 of {\em
  Applied Mathematical Sciences}.
\newblock Springer-Verlag, New York, 2004.

\bibitem{MR4269305}
A.~Ern and J.-L. Guermond.
\newblock {\em Finite elements {II}---{G}alerkin approximation, elliptic and
  mixed {PDE}s}, volume~73 of {\em Texts in Applied Mathematics}.
\newblock Springer, Cham, [2021] \copyright 2021.

\bibitem{Evans}
L.~C. Evans.
\newblock {\em Partial differential equations}, volume~19.
\newblock American Mathematical Society, 1998.

\bibitem{MR4304887}
F.~Fuica, E.~Ot\'arola, and D.~Quero.
\newblock Error estimates for optimal control problems involving the {S}tokes
  system and {D}irac measures.
\newblock {\em Appl. Math. Optim.}, 84(2):1717--1750, 2021.

\bibitem{MR0548867}
V.~Girault and P.-A. Raviart.
\newblock {\em Finite element approximation of the {N}avier-{S}tokes
  equations}, volume 749 of {\em Lecture Notes in Mathematics}.
\newblock Springer-Verlag, Berlin-New York, 1979.

\bibitem{GWYN2017}
W.~Gong and N.~Yan.
\newblock Adaptive finite element method for elliptic optimal control problems:
  convergence and optimality.
\newblock {\em Numer. Math.}, 135(4):1121--1170, 2017.

\bibitem{HJXJ13}
J.~Hu and J.~Xu.
\newblock Convergence and optimality of the adaptive nonconforming linear
  element method for the {S}tokes problem.
\newblock {\em J. Sci. Comput.}, 55(1):125--148, 2013.

\bibitem{kesavann}
S.~Kesavan.
\newblock {\em Functional analysis}, volume~52 of {\em Texts and Readings in
  Mathematics}.
\newblock Hindustan Book Agency, New Delhi, 2009.

\bibitem{KKRK2014}
K.~Kohls, A.~R{\"o}sch, and K.~G. Siebert.
\newblock A posteriori error analysis of optimal control problems with control
  constraints.
\newblock {\em SIAM J. Control Optim.}, 52(3):1832--1861, 2014.

\bibitem{LHCY18}
H.~Leng and Y.~Chen.
\newblock Convergence and quasi-optimality of an adaptive finite element method
  for optimal control problems with integral control constraint.
\newblock {\em Adv. Comput. Math.}, 44(2):367--394, 2018.

\bibitem{leykekhman2025priori}
D.~Leykekhman, B.~Vexler, and J.~Wagner.
\newblock A priori error estimates for optimal control problems governed by the
  transient stokes equations and subject to state constraints pointwise in
  time.
\newblock {\em IMA Journal of Numerical Analysis}, page draf018, 2025.

\bibitem{MR3194820}
M.~Li, S.~Mao, and S.~Zhang.
\newblock New error estimates of nonconforming mixed finite element methods for
  the {S}tokes problem.
\newblock {\em Math. Methods Appl. Sci.}, 37(7):937--951, 2014.

\bibitem{LY2021}
Y.~Li.
\newblock Quasi-optimal adaptive mixed finite element methods for controlling
  natural norm errors.
\newblock {\em Math. Comp.}, 90(328):565--593, 2021.

\bibitem{MR4661255}
Y.~Li, Q.~Wang, and Z.~Zhou.
\newblock Adaptive virtual element method for optimal control problem governed
  by {S}tokes equations.
\newblock {\em J. Sci. Comput.}, 97(3):Paper No. 63, 29, 2023.

\bibitem{MR2237883}
H.~Liu and N.~Yan.
\newblock Global superconvergence for optimal control problems governed by
  {S}tokes equations.
\newblock {\em Int. J. Numer. Anal. Model.}, 3(3):283--302, 2006.

\bibitem{MR2387126}
H.~Liu and N.~Yan.
\newblock Recovery type superconvergence and a posteriori error estimates for
  control problems governed by {S}tokes equations.
\newblock {\em J. Comput. Appl. Math.}, 209(2):187--207, 2007.

\bibitem{MR1950625}
W.~Liu and N.~Yan.
\newblock A posteriori error estimates for control problems governed by
  {S}tokes equations.
\newblock {\em SIAM J. Numer. Anal.}, 40(5):1850--1869, 2002.

\bibitem{MR2803865}
S.~Nicaise and D.~Sirch.
\newblock Optimal control of the {S}tokes equations: conforming and
  non-conforming finite element methods under reduced regularity.
\newblock {\em Comput. Optim. Appl.}, 49(3):567--600, 2011.

\bibitem{RV06}
A.~R{\"o}sch and B.~Vexler.
\newblock Optimal control of the {S}tokes equations: a priori error analysis
  for finite element discretization with postprocessing.
\newblock {\em SIAM J. Numer. Anal.}, 44(5):1903--1920, 2006.

\bibitem{YWN2022}
Y.~Shen, W.~Gong, and N.~Yan.
\newblock Convergence of adaptive nonconforming finite element method for
  {S}tokes optimal control problems.
\newblock {\em J. Comput. Appl. Math.}, 412:1--23, 2022.

\bibitem{SYYNZZ19}
Y.~Shenand, N.~Yan, and Z.~Zhou.
\newblock Convergence and quasi-optimality of an adaptive finite element method
  for elliptic {R}obin boundary control problem.
\newblock {\em J. Comput. Appl. Math.}, 356:1--21, 2019.

\bibitem{MR2324418}
R.~Stevenson.
\newblock Optimality of a standard adaptive finite element method.
\newblock {\em Found. Comput. Math.}, 7(2):245--269, 2007.

\bibitem{MR4753595}
C.~Sun, M.~Yang, and Z.~Zhou.
\newblock The nonconforming virtual element method for optimal control problem
  governed by {S}tokes equations.
\newblock {\em J. Appl. Math. Comput.}, 70(3):2019--2043, 2024.

\bibitem{TF2010}
F.~Tr{\"o}ltzsch.
\newblock {\em Optimal control of partial differential equations}, volume 112
  of {\em Graduate Studies in Mathematics}.
\newblock American Mathematical Society, Providence, RI, 2010.
\newblock Theory, methods and applications, Translated from the 2005 German
  original by J{\"u}rgen Sprekels.

\end{thebibliography}


\section{Appendix}
\label{appendix}
\subsubsection{Proof of Theorem \ref{d error equivalence}{\it (i)-(ii)}}
\label{(2.2)(i)-(ii)a}
\begin{proof}
{\bf Step-1 : (Primary estimates)}
Choose ${\bf v}=\bar{\y}-\widetilde{\bf y}, q=\bar{r}-\widetilde{r}$ in \eqref{2.11}, ${\bf v}=\bar{\y}-\widehat{\y}, q=\bar{r}-\widehat{r}$ in \eqref{2.24}, and ${\bf v}=\bar{\pee}-\widetilde{\pee}, q=\bar{s}-\widetilde{s}$ in \eqref{2.12} along with ellipticity of bilinear form $a$ to get
\begin{equation}
	\label{3.17a}
	\enorm{\bar{\y}-\widetilde{\y}}_{\rm pw}\leq C_{\rm P}\|\bar{{\bf u}}-\bar{{\bf u}}_h\|, \enorm{\bar{\y}-\widehat{\y}}_{\rm pw}\leq C_{\rm P}\|\bar{{\bf u}}-\widetilde{{\bf u}}\|,\mbox{ and }\enorm{\bar{\pee}-\widetilde{\pee}}_{\rm pw}\leq C_{\rm P}\|\bar{{\bf y}}-\bar{{\bf y}}_h\|. 
\end{equation}
Lemma \ref{Poincare}{\it(i)} and \eqref{3.17a} give
\begin{equation}
	\label{3.18a}
	\|\bar{\y}-\widetilde{\y}\|\leq C_{\rm P}^2\|\bar{{\bf u}}-\bar{{\bf u}}_h\|, \|\bar{\y}-\widehat{\y}\|\leq C_{\rm P}^2\|\bar{{\bf u}}-\widetilde{{\bf u}}\|,\mbox{ and }\|\bar{\pee}-\widetilde{\pee}\|\leq C_{\rm P}^2\|\bar{{\bf y}}-\bar{{\bf y}}_h\|. 
\end{equation}

{\bf Step-2 : (Control based bounds for the state and adjoint errors)}
Application of triangle and \eqref{3.17a}-\eqref{3.18a} give
\begin{equation}
	\label{3.19a}
	\wsterr\leq C_{16}(\wcerr+\wauxsterr)\mbox{ and }\wlsterr\leq C_{17}(\wcerr+\wlauxsterr)
\end{equation}
with $C_{16}:=\max\{1,C_{\rm P}\}$ and $C_{17}:=\max\{1,C_1\}$. Triangle inequality, \eqref{3.17a}, and Lemma \ref{discrete sobolev}{\it(i)} give
\begin{equation}
	\label{3.20a}
	\waderr\leq C_{18}(\wcerr+\wauxsterr+\wauxaderr)\mbox{ with }C_{18}:=\max\{1,C_{\rm P}^2C_{\rm PJ}\}.
\end{equation}
Again application of triangle inequality and \eqref{3.18a} leads to
\begin{equation}
	\label{3.21a}
	\wladerr\leq C_{19}(\wcerr+\wlauxsterr+\wlauxaderr)\mbox{ with }C_{19}:=\max\{1,C_{\rm P}^4\}.
\end{equation}
 For the bounds on terms $\wtum$ and $\wmai$ one can refer to \eqref{3.6} and \eqref{3.7} respectively.\vspace{0.1cm} \\
{\bf Step-3 : (Upper bound on the control)}
Definition of $\widetilde{\bf u}$ from \eqref{2.21}, \cite[Theorem 7.1.2]{kesavann}, and \eqref{cop} result in
\begin{equation*}
	(\alpha\widetilde{\bf u}+\bar{\pee}_h, {\bf z}-\widetilde{\bf u})\geq 0\mbox{ and }(\alpha\bar{\bf u}+\bar{\pee},{\bf z}-\bar{\bf u})\geq 0\mbox{ for all }{\bf z}\in {\bf U}_{\rm ad}. 
\end{equation*}
Choice of  ${\bf z}=\bar{\bf u}\in {\bf U}_{\rm ad}\mbox{ and }{\bf z}=\widetilde{\bf u}\in {\bf U}_{\rm ad}$ in the above mentioned equation, respectively with some algebraic calculations give
\begin{equation*}
	\alpha\|\bar{\bf u}-\widetilde{\bf u}\|^2\leq (\bar{\bf p}-\bar{\bf p}_h, \widetilde{\bf u}-\bar{\bf u})=(\widetilde{\bf p}-\bar{\bf p},\bar{\bf u}-\widetilde{\bf u})+(\widetilde{\bf p}-\bar{\bf p}_h,\widetilde{\bf u}-\bar{\bf u}). 
\end{equation*}
Choice of ${\bf v}=\widetilde{\pee}-\bar{\bf p}$ in \eqref{2.24} and $q=\bar{r}-\widehat{r}$ in \eqref{2.12} give
\begin{equation*}
	\alpha\|\bar{\bf u}-\widetilde{\bf u}\|^2\leq a(\bar{\bf y}-\widehat{\bf y},\widetilde{\bf p}-\bar{\bf p})+(\widetilde{\bf p}-\bar{\bf p}_h,\widetilde{\bf u}-\bar{\bf u})=a(\widetilde{\bf p}-\bar{\bf p},\bar{\bf y}-\widehat{\bf y})+(\widetilde{\bf p}-\bar{\bf p}_h,\widetilde{\bf u}-\bar{\bf u}). 
\end{equation*}
Again choose ${\bf v}=\widehat{\bf y}-\bar{\bf y}$ in \eqref{2.12} and $q=\bar{s}-\widetilde{s}$ in \eqref{2.24} to get
\begin{equation*}
	\alpha\|\bar{\bf u}-\widetilde{\bf u}\|^2\leq(\bar{\bf y}-\bar{\bf y}_h,\widehat{\bf y}-\bar{\bf y})+(\widetilde{\bf p}-\bar{\bf p}_h,\widetilde{\bf u}-\bar{\bf u}).
\end{equation*}
Some algebraic calculations yield
\begin{equation*}
	\alpha\|\bar{\bf u}-\widetilde{\bf u}\|^2\leq (\bar{\bf y}-\widehat{\bf y},\widehat{\bf y}-\bar{\bf y}_h)+\|\widehat{\bf y}-\bar{\bf y}_h\|^2+(\widetilde{\bf p}-\bar{\bf p}_h,\widetilde{\bf u}-\bar{\bf u}).
\end{equation*}
Application of Cauchy Schwartz inequality, triangle inequality, and \eqref{3.18a} results in
\begin{equation*}
 \alpha\|\bar{\bf u}-\widetilde{\bf u}\|^2\leq C_{\rm P}^2\|\bar{\bf u}-\widetilde{\bf u}\|(\|\widehat{\y}-\widetilde{\y}\|+\|\widetilde{\y}-\yhb\|)+2(\|\widehat{\y}-\widetilde{\y}\|^2+\|\widetilde{\y}-\yhb\|^2)+\|\widetilde{\bf p}-\bar{\bf p}_h\|\|\widetilde{\bf u}-\bar{\bf u}\|.
\end{equation*}
Choice of ${\bf v}=\widehat{\y}-\widetilde{\y}$ and $q=\widehat{r}-\widetilde{r}$ in \eqref{332} along with ellipticity of bilinear form $a$ leads to
\begin{equation*}
	\alpha\|\bar{\bf u}-\widetilde{\bf u}\|^2\leq C_{\rm P}^2\|\bar{\bf u}-\widetilde{\bf u}\|\|\widetilde{\bf u}-\bar{\bf u}_h\|+C_{\rm P}^2\|\widetilde{\bf y}-\bar{\bf y}_h\|\|\bar{\bf u}-\widetilde{\bf u}\|+2\|\widetilde{\bf u}-\bar{\bf u}_h\|^2+2\|\widetilde{\bf y}-\bar{\bf y}_h\|^2+\|\widetilde{\bf p}-\bar{\bf p}_h\|\|\widetilde{\bf u}-\bar{\bf u}\|. 
\end{equation*}
Applying Young’s inequality with $a=C_{\rm P}^2\|\bar{\bf u}-\widetilde{\bf u}\|$, $b=\|\bar{\bf u}-\widetilde{\bf u}\|$, and $\epsilon={3C_{\rm P}^4}/{\alpha}$, with $a=C_{\rm P}^2\|\widetilde{\bf y}-\bar{\bf y}_h\|$, $b=\|\bar{\bf u}-\widetilde{\bf u}\|$, and $\epsilon={\alpha}/{3}$, and with $a=\|\widetilde{\bf p}-\bar{\bf p}_h\|$, $b=\|\widetilde{\bf u}-\bar{\bf u}\|$, and $\epsilon={\alpha}/{3}$ in the above equations, together with some algebraic manipulations, yields
\begin{equation}
	\label{3.22a}
	\|\bar{\bf u}-\widetilde{\bf u}\|\leq C_{20}(\|\widetilde{\bf u}-\bar{\bf u}_h\|+\|\widetilde{\bf y}-\bar{\bf y}_h\|+\|\widetilde{\bf p}-\bar{\bf p}_h\|)
\end{equation}
with $C_{20}:=\max\{\sqrt{\frac{2}{\alpha}(\frac{3C_{\rm P}^4}{\alpha}+2)},\frac{\sqrt{3}}{\alpha}\}$. Triangle inequality, \eqref{3.22}, and Lemma \ref{discrete sobolev}{\it (i)} give
\begin{equation}
	\label{3.23a}
	\wcerr\leq C_{21}(\normone+\wauxsterr+\wauxaderr)\mbox{ with }C_{21}:=C_{\rm PJ}(1+C_{20}).
\end{equation}

{\bf Step-4 : (Complete Upper and lower bounds)}
Use of \eqref{3.23a}, \eqref{3.19a}-\eqref{3.20a}, and \eqref{3.6}-\eqref{3.7} give
\begin{align*}
	\|E_{\rm C}\|+\enorm{E_{\rm S}}_{\rm pw}+\enorm{E_{\rm A}}_{\rm pw}\leq C_{\rm d1}(\|\widetilde{E}_{\rm C}\|+\enorm{\widetilde{E}_{\rm S}}_{\rm pw}+\enorm{\widetilde{E}_{\rm A}}_{\rm pw})
\end{align*}
with $C_{\rm d1}:=C_{21}(1+C_{16}+C_{18}+C_{4}+C_{5})$. Again use of \eqref{3.19a}, \eqref{3.21a}, \eqref{3.6}-\eqref{3.7}, and \eqref{3.23a} leads to
\begin{align*}
	\|E_{\rm C}\|+\|{E_{\rm S}}\|+\|{E_{\rm A}}\|\leq C_{\rm d2}(\|\widetilde{E}_{\rm C}\|+\|{\widetilde{E}_{\rm S}}\|+\widetilde{E}_{\rm A}\|)\mbox{ with }C_{\rm d2}:=C_{21}(1+C_{17}+C_{19}+C_4+C_6).
\end{align*}
 Applying triangle inequality, by introducing $(\bar{\bf y},\bar{r},\bar{\bf p},\bar{s},\bar{\bf u})$, Lemma \ref{discrete sobolev}{\it (i)}, \eqref{defu}, \eqref{2.21}, \eqref{cst}, \eqref{cad}, and Lipschitz continuity of projection operator, $\Pi_{[{\bf u}_a,{\bf u}_b]}$,(with Lipschitz constant $\alpha^{-1}$) gives
\begin{align*}
	\|\widetilde{E}_{\rm C}\|+\enorm{\widetilde{E}_{\rm S}}_{\rm pw}+\enorm{\widetilde{E}_{\rm A}}_{\rm pw}\leq C_{\rm d3}(\|E_{\rm C}\|+\enorm{E_{\rm S}}_{\rm pw}+\enorm{E_{\rm A}}_{\rm pw})
\end{align*}
with $C_{\rm d3}:=\max\{1, C_{\rm cst}, \alpha^{-1}C_{\rm PJ},C_{\rm cad}C_{\rm PJ}\}$ and
\begin{align*}
	\|\widetilde{E}_{\rm C}\|+\|{\widetilde{E}_{\rm S}}\|+\widetilde{E}_{\rm A}\|\leq C_{\rm d4}(\|E_{\rm C}\|+\|{E_{\rm S}}\|+\|{E_{\rm A}}\|)\mbox{ with }C_{\rm d4}:=\max\{1,C_{\rm cst},C_{\rm cad},\alpha^{-1}\}.
\end{align*}
\end{proof}
\subsubsection{Proof of Theorem \ref{d error equivalence}{\it (iii)-(iv)}}

\begin{proof}
{\bf Step-1 : (Primary estimates)}
Choose $\widehat{{\bf v}}_h=\widehat{\bar{\y}}_h-\widehat{\widetilde{\bf y}}_h, \widehat{{q}}_h={\widehat{\bar{r}}}_h-\widehat{\widetilde{r}}_h$ in \eqref{2.17}, $\widehat{{\bf v}}_h=\widehat{\bar{\y}}_h-\widehat{\y}_h, \widehat{q}_h=\widehat{\bar{r}}_h-\widehat{r}_h$ in \eqref{2.25}, and $\widehat{{\bf v}}_h=\widehat{\bar{\pee}}_h-\widehat{\widetilde{\pee}}_h, \widehat{{q}}_h=\widehat{\bar{s}}_h-\widehat{\widetilde{s}}_h$ in \eqref{2.18} to get
\begin{equation}
	\label{3.24a}
	\enorm{\widehat{\bar{\y}}_h-\widehat{\widetilde{\y}}_h}_{\rm pw}\leq C_{\rm dP}\|\widehat{\bar{{\bf u}}}_h-\bar{{\bf u}}_h\|, \enorm{\widehat{\bar{\y}}_h-\widehat{\y}_h}_{\rm pw}\leq C_{\rm dP}\|\widehat{\bar{{\bf u}}}_h-\widetilde{{\bf u}}_h\|,\mbox{ and }\enorm{\widehat{\bar{\pee}}_h-\widehat{\widetilde{\pee}}_h}_{\rm pw}\leq C_{\rm dP}\|\widehat{\bar{{\bf y}}}_h-\bar{{\bf y}}_h\|. 
\end{equation}

\noindent Application of Lemma \ref{discrete sobolev}{\it (ii)} and \eqref{3.24a} give
\begin{equation}
	\label{3.25a}
	\|\widehat{\bar{\y}}_h-\widehat{\widetilde{\y}}_h\|\leq C_{\rm dP}^2\|\widehat{\bar{{\bf u}}}_h-\bar{{\bf u}}_h\|, \|\widehat{\bar{\y}}_h-\widehat{\y}_h\|\leq C_{\rm dP}^2\|\widehat{\bar{{\bf u}}}_h-\widetilde{{\bf u}}_h\|,\mbox{ and }\|\widehat{\bar{\pee}}_h-\widehat{\widetilde{\pee}}_h\|\leq C_{\rm dP}^2\|\widehat{\bar{{\bf y}}}_h-\bar{{\bf y}}_h\|. 
\end{equation}

{\bf Step-2 : (Control based bounds for the state and adjoint errors)}

Applying triangle inequality, \eqref{3.24a}, and \eqref{3.25a} results in
\begin{equation}
	\label{3.26a}
	\2\leq C_{22}(\1+\6)\mbox{ and }\w\leq C_{23}(\1+\|{\widehat{\widetilde{\bf y}}}_h-\bar{\bf y}_h\|)
\end{equation}
with $C_{22}:=\max\{1,C_{\rm dP}\}$ and $C_{23}:=\max\{1,C_8\}$. Multiple use of triangle inequality, \eqref{3.24a}, and Lemma \ref{discrete sobolev}{\it (ii)} leads to
\begin{equation}
	\label{3.27a}
	\3\leq C_{24}(\1+\6+\7)\mbox{ with }C_{24}:=\max\{1,C_{\rm dP}^2C_{\rm PI}\}.
\end{equation}
Again application of triangle inequality and \eqref{3.25a} yield
\begin{equation}
	\label{3.28a}
	\x\leq C_{25}(\1+\q+\z)\mbox{ with }C_{25}:=\max\{1,C_{\rm P}^2,C_{\rm P}^2C_{\rm dP}^2\}. 
\end{equation}
 For bounds on terms $\|{\widehat{\bar r}}_h-\rhb\|$ and $\|{\widehat{\bar s}}_h-\shbar\|$ one can refer to \eqref{3.14}-\eqref{s}.

{\bf Step-3 : (Upper bound on the control)}
Use the definition of $\widetilde{\bf u}_h$ from \eqref{2.21}, \cite[Theorem-7.1.3]{kesavann}, and \eqref{dop}  ($\widehat{\T}$-level) to get
\begin{equation*}
	(\alpha\widetilde{\bf u}_h+\bar{\bf p}_h,\widehat{{\bf z}}_h-\widetilde{\bf u}_h)\geq 0\mbox{ and }(\alpha{\widehat{\bar{\bf u}}}_h+{\widehat{\bar{\pee}}}_h,\widehat{{\bf z}}_h-{\widehat{\bar{\bf u}}}_h)\geq 0\mbox{ for all }\widehat{{\bf z}}_h\in \widehat{\bf U}_{h,{\rm ad}}.
\end{equation*}
Choice of $\widehat{{\bf z}}_h=\widehat{\bar{\bf u}}_h\in \widehat{\bf U}_{h,{\rm ad}}$ and $\widehat{{\bf z}}_h=\widetilde{\bf u}_h\in \widehat{\bf U}_{h,{\rm ad}}$ in above mentioned inequalities along with some algebraic calculations give
\begin{equation*}
	\alpha\|{\widehat{\bar{\bf u}}}_h-\widetilde{\bf u}_h\|^2\leq ({\widehat{\bar{\bf p}}}_h-\bar{\bf p}_h,\widetilde{\bf u}_h-{\widehat{\bar{\bf u}}}_h)=(\widehat{\widetilde{\bf p}}_h-{\widehat{\bar{\bf p}}}_h,{\widehat{\bar{\bf u}}}_h-\widetilde{\bf u}_h)+(\widehat{\widetilde{\pee}}_h-\bar{\bf p}_h,\widetilde{\bf u}_h-{\widehat{\bar{\bf u}}}_h).
\end{equation*}
Choose $\widehat{{\bf v}}_h=\widehat{\widetilde{\bf p}}_h-\widehat{\bar{\bf p}}_h$ in equation \eqref{2.25} and $\widehat{{q}}_h=\widehat{\bar{r}}_h-\widehat{r}_h$ in \eqref{2.18} to get
\begin{equation*}
	\alpha\|{\widehat{\bar{\bf u}}}_h-\widetilde{\bf u}_h\|^2\leq a({\widehat{\bar{\bf y}}}_h-\widehat{\bf y}_h,\widehat{\widetilde{\bf p}}_h-{\widehat{\bar{\bf p}}}_h)+(\widehat{\widetilde{\pee}}_h-\bar{\bf p}_h,\widetilde{\bf u}_h-{\widehat{\bar{\bf u}}}_h). 
\end{equation*}
Choice of $\widehat{{\bf v}}_h=\widehat{\bf y}_h-{\widehat{\bar{\bf y}}}_h$ in \eqref{2.18} and $q_h=\widehat{\bar{s}}_h-\widehat{\widetilde{s}}_h$ in \eqref{2.25} leads to
\begin{equation*}
	\alpha\|{\widehat{\bar{\bf u}}}_h-\widetilde{\bf u}_h\|^2\leq({\widehat{\bar{\bf y}}}_h-\bar{\bf y}_h,\widehat{\bf y}_h-{\widehat{\bar{\bf y}}}_h)+(\widehat{\widetilde{\pee}}_h-\bar{\bf p}_h,\widetilde{\bf u}_h-{\widehat{\bar{\bf u}}}_h).
\end{equation*}
Some algebraic calculations give
\begin{equation*}
    \alpha\|{\widehat{\bar{\bf u}}}_h-\widetilde{\bf u}_h\|^2\leq (\widehat{\bar{\bf y}}_h-\widehat{\bf y}_h,\widehat{\bf y}_h-{\bar{\bf y}}_h)+\|\widehat{\bf y}_h-\yhb\|^2+(\widehat{\widetilde{\pee}}_h-\bar{\bf p}_h,\widetilde{\bf u}_h-{\widehat{\bar{\bf u}}}_h).
\end{equation*}
Applying Cauchy Schwartz inequality, triangle inequality, and \eqref{3.25a} results in
\begin{equation*}
    \alpha\|{\widehat{\bar{\bf u}}}_h-\widetilde{\bf u}_h\|^2\leq C_{\rm dP}^2\|\widehat{\bar{\bf u}}_h-\widetilde{\bf u}_h\|(\|\widehat{\bf y}_h-\widehat{\widetilde{\bf y}}_h\|+\|\widehat{\widetilde{\bf y}}_h-\yhb\|)+2(\|\widehat{\bf y}_h-\widehat{\widetilde{\bf y}}_h\|^2+\|\widehat{\widetilde{\bf y}}_h-\yhb\|^2)+(\widehat{\widetilde{\pee}}_h-\bar{\bf p}_h,\widetilde{\bf u}_h-{\widehat{\bar{\bf u}}}_h).
\end{equation*}
The choice of $\widehat{\bf v}_h=\widehat{\bf y}_h-\widehat{\widetilde{\bf y}}_h$ and $\widehat{q}_h=\widehat{r}_h-\widehat{\widetilde{r}}_h$ in \eqref{333} together with the ellipticity of bilinear form $a$ give
\begin{equation*}
    \alpha\|{\widehat{\bar{\bf u}}}_h-\widetilde{\bf u}_h\|^2\leq C_{\rm dP}^2\|\widehat{\bar{\bf u}}_h-\widetilde{\bf u}_h\|(\|\widetilde{\bf u}_h-\uhb\|+\|\widehat{\widetilde{\bf y}}_h-\yhb\|)+2(\|\widetilde{\bf u}_h-\uhb\|^2+\|\widehat{\widetilde{\bf y}}_h-\yhb\|^2)+(\widehat{\widetilde{\pee}}_h-\bar{\bf p}_h,\widetilde{\bf u}_h-{\widehat{\bar{\bf u}}}_h).
\end{equation*}
Applying Young’s inequality with $a=C_{\rm dP}^2\|{\widehat{\bar{\bf u}}}_h-\widetilde{\bf u}_h\|$, $b=\normtwo$, and $\epsilon={3C_{\rm dP}^4}/{\alpha}$, with $a=C_{\rm dP}^2\|{\widehat{\bar{\bf u}}}_h-\widetilde{\bf u}_h\|$, $b=\q$, and $\epsilon={3C_{\rm dP}^4}/{\alpha}$, and with $a=\z$, $b=\|{\widehat{\bar{\bf u}}}_h-\widetilde{\bf u}_h\|$, and $\epsilon={\alpha}/{3}$ in the above equation, together with some routine calculations, yields
\begin{equation}
	\label{3.29a}
	\|{\widehat{\bar{\bf u}}}_h-\widetilde{\bf u}_h\|\leq C_{26}(\normtwo+\q+\z). 
\end{equation}
Here $C_{26}:=\max\{\sqrt{\frac{2}{\alpha}(\frac{3C_{\rm dP}^4}{2\alpha})+2},\frac{\sqrt{3}}{\alpha}\}$. 
Use of triangle inequality, \eqref{3.29a}, and Lemma \ref{discrete sobolev}{\it (ii)} yields
\begin{align}	
    \label{3.30a}
	&\1\leq C_{27}(\normtwo+\6+\7)\mbox{ with }C_{27}:=C_{\rm PI}(1+C_{26}).
\end{align}
{\bf Step-4 : (Complete upper and lower bounds)}
Use \eqref{3.26a}-\eqref{3.27a}, \eqref{3.30a}, \eqref{3.14}, and \eqref{s} to get
\begin{align*}
	\|E_{{\rm C},h}\|+\enorm{E_{{\rm S},h}}_{\rm pw}+\enorm{E_{{\rm A},h}}_{\rm pw}\leq C_{\rm d5}(\|\widetilde{E}{_{\rm C},h}\|+\enorm{\widetilde{E}_{{\rm S},h}}_{\rm pw}+\enorm{\widetilde{E}_{{\rm A},h}}_{\rm pw})
\end{align*}
with $C_{\rm d5}:=\max\{1, C_{27}(C_{11}+C_{12}+C_{22})\}$. Again using \eqref{3.26a}, \eqref{3.28a}, \eqref{3.14}-\eqref{3.15}, and \eqref{3.29a} results in 
\begin{align*}
	\|E_{{\rm C},h}\|+\|E_{{\rm S},h}\|+\|E_{{\rm A},h}\|\leq C_{\rm d6}(\|\widetilde{E}_{_{\rm C},h}\|+\|\widetilde{E}_{{\rm S},h}\|+\|\widetilde{E}_{{\rm A},h}\|).
\end{align*}
Here $C_{\rm d6}:=\max\{1,C_{11}+C_{13}+C_{19}+C_{23}\}$.
Following analogous calculation as we have done in step-4 of Section \ref{(2.2)(i)-(ii)a}, by introducing $(\widehat{\bar{{\bf y}}}_h,\widehat{\bar{r}}_h,\widehat{\bar{\bf p}}_h,\widehat{\bar{s}}_h,\widehat{\bar{\bf u}}_h)$ we get
\begin{align*}
	\|\widetilde{E}_{{\rm C},h}\|+\enorm{\widetilde{E}_{{\rm S},h}}_{\rm pw}+\enorm{\widetilde{E}_{{\rm A},h}}_{\rm pw}\leq C_{\rm d7}(\|E_{{\rm C},h}\|+\enorm{E_{{\rm S},h}}_{\rm pw}+\enorm{E_{{\rm A},h}}_{\rm pw}),
\end{align*} 
with $C_{\rm d7}:=\max\{1,\alpha^{-1}C_{\rm PI},C_{\rm dad}C_{\rm dst},C_{\rm dst}\}$. Analogous calculation leads to
\begin{align*}
	\|\widetilde{E}_{{\rm C},h}\|+\|\widetilde{E}_{{\rm S},h}\|+\|\widetilde{E}_{{\rm A},h}\|\leq C_{\rm d8}(\|E_{{\rm C},h}\|+\|E_{{\rm S},h}\|+\|E_{{\rm A},h}\|)\mbox{ with }
\end{align*}
 $C_{\rm d8}:=\max\{1,\alpha^{-1}C_{\rm PI}, C_{\rm dad}C_{\rm dst}C_{\rm PI}, C_{\rm PI}C_{\rm dst}\}$. This completes proof of Theorem \ref{d error equivalence}. 
 \end{proof}
 
\end{document}